\newtheorem{theorem}{Theorem}[section]
\newtheorem{lemma}[theorem]{Lemma}
\newtheorem{cor}[theorem]{Corollary}
\theoremstyle{definition}
\theoremstyle{remark}
\newtheorem{remark}[theorem]{Remark}
\numberwithin{equation}{section}
\begin{document}

\title[On the Constants and Extremal Function and Sequence for Hardy Inequalities in $L_p$ and $l_p$]{On the Constants and Extremal Function and Sequence for Hardy Inequalities in $L_p$ and $l_p$}
\thanks{Research supported by the Bulgarian National Research Fund through Contract KP-06-N62/4.}

\author[Ivan Gadjev]{Ivan Gadjev}
 \address{Department of Mathematics and Informatics,
University of Sofia,
5 James Bourchier Blvd., 
1164 Sofia, Bulgaria}
\email{gadjev@fmi.uni-sofia.bg}

\subjclass[2010]{Primary 26D10, 26D15; Secondary 33C45, 15A42}
      
\keywords{Hardy inequality, exact constant, extremal function, extremal sequence.}  

\begin{abstract}
  We study the behavior of the smallest possible constants $d(a,b)$ and $d_n$   in Hardy inequalities
$$
\int_a^b\left(\frac{1}{x}\int_a^xf(t)dt\right)^p\,dx\leq
d(a,b)\,\int_a^b [f(x)]^p dx
$$
and
$$
\sum_{k=1}^{n}\Big(\frac{1}{k}\sum_{j=1}^{k}a_j\Big)^p\leq
d_n\,\sum_{k=1}^{n}a_k^p.
$$
The exact rate of convergence of $d(a,b)$ and $d_n$ is established and 
the ``almost extremal'' function and sequence are found.
\end{abstract}

\maketitle

\section{Introduction and statement of the results}

Between 1919 and 1925, in the series of papers  \cite {H1919, H1920, H1925} G. H. Hardy established the following inequalities
which are nowadays known as the celebrated Hardy's ones. Let $p>1$, then the integral inequality states that 
\begin{equation}\label{eq01}
\int_0^\infty\left(\frac{1}{x}\int_0^xf(t)dt\right)^p\,dx\leq
\left(\frac{p}{p-1}\right)^{p}\,\int_0^\infty f^p(x)dx
\end{equation}
holds for every $f$, such that $f(x)\geq0$ for $x\in (0,\infty)$ and $f^p$ is integrable over $(0,\infty)$.

The corresponding discrete version claims that 
\begin{equation}
\label{eq02}
\sum_{k=1}^{\infty}\Big(\frac{1}{k}\sum_{j=1}^{k}a_j\Big)^p\leq
\left(\frac{p}{p-1}\right)^{p}\,\sum_{k=1}^{\infty}{a_k^p}
\end{equation}
for every sequence $\{ a_k \}$ of non-negative numbers, for which the series on the right-hand side converges. 

In his 1920 paper \cite{H1920} Hardy claimed that he and Marcel Riesz derived (\ref{eq02}) independently, but in their results the 
larger constant $(p^2/(p-1))^p$ appears on the right-hand side.
E. Landau, in the letter \cite{Lan2} dated 1921, published later in \cite{Lan1}, 
was the first to establish  (\ref{eq02}) with the exact constant $(p/(p-1))^{p}$ in the sense that 
there is no smaller one for which \eqref{eq02}  holds for every sequence of non-negative numbers $a_k$. 
For the latter statement he considered the sequence $a_k^\ast=k^{-1/p-\varepsilon}$, suggested earlier by Hardy, and showed that 
$$
\left( \frac{a_1^\ast + \cdots + a_k^\ast}{k} \right)^p > \left(\frac{p}{p-1}\right)^p \left( (a_k^\ast)^p - \frac{p}{k^{2-1/p}} \right).
$$
Since $\sum_{k=1}^\infty (a_k^\ast)^p \to \infty$ as $\varepsilon \to 0$, the summation of the latter inequalities implies the sharpness of 
$(p/(p-1))^{p}$ for \eqref{eq02}.
In the same letter Landau pointed out that equality in \eqref{eq02} occurs only for the trivial sequence, that is,
 when $a_k=0$ for every $k \in \mathbb{N}$. Similarly, equality in \eqref{eq01} occurs if and only
if $f(x)\equiv0$ almost everywhere.  

The lack of nontrivial extremizers and the fact that the above argument of Landau does not work for finite sequences motivates one to consider, 
for any $a$ and $b$ with $-\infty\le  a < b\le \infty$ and weight positive a.e. functions $u(x), v(x)$, the so-called general Hardy's integral inequality 
\begin{equation}\label{eqI159}
\int_a^b\left(\int_a^xf(t)dt\right)^pu(x)\,dx\leq
d(a,b)\,\int_a^b f^p(x)v(x) dx,\quad f \in L^p[v;a,b]
\end{equation}
and its discrete counterpart 
\begin{equation}\label{eqI169}
\sum_{k=1}^{n}\Big(\sum_{j=1}^{k}a_j\Big)^pu_k\leq
d_n\,\sum_{k=1}^{n}{a_k^p}v_k, \qquad a_k,u_k,v_k\geq0,\,\,\,k=1,2,...,n.
\end{equation}
Obviously the equations \eqref{eqI159} and \eqref{eqI169} are the``finite versions'' of  \eqref{eq01} and \eqref{eq02}. The natural questions are: what are the best
constants $d(a,b)$ and $d_n$ and the corresponding extre\-mi\-zers. 
This is exactly our endeavor 
in this paper, mainly because of the importance of Hardy's inequalities, their far reaching generalizations, especially 
to the so-called Hardy-Sobolev inequalities, and thus the necessity of understanding them more thoroughly.
Answering the above questions in satisfactory manner
 for arbitrary weight functions and sequences such that the inequalities \eqref{eqI159} and \eqref{eqI169} hold, is impossible.
In this paper we consider the important "unweighted" versions of inequalities \eqref{eqI159} and \eqref{eqI169}, i.e.
\begin{equation}\label{eqI15}
\int_a^b\left(\frac{1}{x}\int_a^xf(t)dt\right)^p\,dx\leq
d(a,b)\,\int_a^b [f(x)]^p dx, \qquad f(x)\geq0,\ \ f \in L^p[a,b]
\end{equation}
and 
\begin{equation}\label{eqI16}
\sum_{k=1}^{n}\Big(\frac{1}{k}\sum_{j=1}^{k}a_j\Big)^p\leq
d_n\,\sum_{k=1}^{n}{a_k^p}, \qquad a_k\geq0,\,\,\,k=1,2,...,n.
\end{equation}

The behavior of the constant $d(a,b)$  was studied in many papers - see, for instance, \cite{Tom}, \cite{Tal},\cite{Muc}. The best results about $d(a,b)$ for $p>1$ could be summarized in the following way
(see, for instance \cite{KMP2007} or \cite{KP2003}).
Let
\[
B=\sup_{a<x<b} \left\{(x-a)^{p-1}\left(x^{1-p}-b^{1-p} \right)\right\}
\]
then for the constant $d(a,b)$ the next estimations are true
\[
\frac{1}{p-1}B\le d(a,b) \le \left(\frac{p}{p-1} \right)^p B.
\]
It is easy to see that only the right estimation gives asymptotically (when $a\rightarrow 0$ or $b\rightarrow \infty$ or both) the exact constant. But not the rate of convergence. 

In \cite{DGM} we studied the inequality \eqref{eqI15} for $p=2$ and established the exact constant $d(a,b)$ and the extremal function.

\begin{theorem}\cite{DGM}\label{th01}
Let  $a$ and $b$ be any fixed numbers with $0<a<b<\infty$. Then the inequality
\begin{equation}\label{MR01}
\int_a^b\left(\frac{1}{x}\int_a^xf(t)dt\right)^2 dx\leq
\frac{4}{1+4\alpha^2}\,\int_a^bf^2(x)\, dx,
\end{equation}
where $\alpha$ is the only solution of the equation
$$
\tan\left(\alpha\ln\frac{b}{a} \right)+2\alpha=0\ \ \mathrm{in\ the\ interval}\ \ \left(\frac{\pi}{2\ln\frac{b}{a}},\frac{\pi}{\ln\frac{b}{a}} \right),
$$
holds for every $f \in L^2[a,b]$.
Moreover, equality in \eqref{MR01} is attained for 
\[
f_{a,b}(x)=x^{-1/2}\left(2\alpha\cos(\alpha\ln x)+\sin(\alpha\ln x) \right).
\]
\end{theorem}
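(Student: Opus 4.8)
The plan is to reduce the problem to a variational problem and solve the resulting Euler--Lagrange equation explicitly. First I would observe that the best constant $d(a,b)$ is the value
\[
d(a,b)=\sup_{f\ne 0}\frac{\displaystyle\int_a^b\left(\frac1x\int_a^xf(t)\,dt\right)^2 dx}{\displaystyle\int_a^b f^2(x)\,dx},
\]
i.e.\ it is the squared norm of the averaging (Hardy) operator $Tf(x)=\frac1x\int_a^x f(t)\,dt$ on $L^2[a,b]$. Since $T$ is a bounded (in fact compact) operator on the finite-interval space, this supremum is attained, and the extremal $f$ is an eigenfunction of $T^*T$ corresponding to its largest eigenvalue $\lambda=d(a,b)$. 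So the real task is to compute the largest eigenvalue of $T^*T$ and its eigenfunction.

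Next I would set up the eigenvalue equation concretely. Writing $F(x)=\int_a^x f(t)\,dt$, the quadratic form is $\int_a^b x^{-2}F(x)^2\,dx$ subject to $\int_a^b F'(x)^2\,dx$ fixed and $F(a)=0$; the natural boundary condition at $b$ from the free endpoint will be $F'(b)=0$. The Euler--Lagrange equation for $\lambda\int F'^2 - \int x^{-2}F^2$ is the ODE
\[
\lambda\,F''(x)+\frac{1}{x^{2}}\,F(x)=0 ,\qquad F(a)=0,\quad F'(b)=0 .
\]
This is an Euler (equidimensional) equation: substituting $F(x)=x^{r}$ gives $\lambda r(r-1)+1=0$, so $r=\tfrac12\pm i\alpha$ with $\tfrac14+\lambda\alpha^2\cdot(\pm)$... precisely $\lambda = \tfrac{1}{1/4+\alpha^2}$, hence $\lambda=\frac{4}{1+4\alpha^2}$ as in the statement. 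The general solution is $F(x)=x^{1/2}\bigl(c_1\cos(\alpha\ln x)+c_2\sin(\alpha\ln x)\bigr)$. Imposing $F(a)=0$ and $F'(b)=0$ yields, after a short computation, a transcendental equation for $\alpha$; one must then argue that the relevant root lies in $\bigl(\tfrac{\pi}{2\ln(b/a)},\tfrac{\pi}{\ln(b/a)}\bigr)$, since the \emph{largest} eigenvalue corresponds to the \emph{smallest} positive $\alpha$ producing a valid (sign-definite, or at least admissible) eigenfunction, and a crude bound shows $\alpha\ln(b/a)$ cannot be below $\pi/2$. Finally, reading off $f_{a,b}=F'$ gives $f_{a,b}(x)=x^{-1/2}\bigl(2\alpha\cos(\alpha\ln x)+\sin(\alpha\ln x)\bigr)$ up to the exact choice of the constants $c_1,c_2$ fixed by $F(a)=0$.

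The main obstacle I anticipate is not the ODE itself but the two endpoint/extremality subtleties. One is justifying that the supremum is attained and that the maximizer is smooth enough to satisfy the Euler--Lagrange equation with the correct natural boundary condition $F'(b)=0$ at the free end --- this is where a careful compactness or direct-method argument is needed. The other is the \emph{selection of the correct root}: the transcendental equation $\tan(\alpha\ln(b/a))+2\alpha=0$ has infinitely many solutions, and one must verify both that no admissible $\alpha$ exists in $(0,\tfrac{\pi}{2\ln(b/a)}]$ and that the first root in $\bigl(\tfrac{\pi}{2\ln(b/a)},\tfrac{\pi}{\ln(b/a)}\bigr)$ indeed gives the global maximum of the Rayleigh quotient rather than a smaller eigenvalue. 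I would handle this by checking that the candidate eigenfunction $f_{a,b}$ does not change sign too much (equivalently, $F$ is comparable to a single Sturm--Liouville ground state) and by comparing $\lambda=\frac{4}{1+4\alpha^2}$ across the admissible roots, the largest $\lambda$ forcing the smallest $\alpha$. After that, the inequality \eqref{MR01} with equality for $f_{a,b}$ follows immediately from the eigenvalue identity.
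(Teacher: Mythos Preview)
Your variational/Sturm--Liouville approach is correct and is the natural route in the Hilbert-space case: pass to $F(x)=\int_a^x f$, reduce to the eigenvalue problem $\lambda F''+x^{-2}F=0$ with $F(a)=0$ and the natural condition $F'(b)=0$, solve the Euler equation to get $\lambda=4/(1+4\alpha^2)$, and read off the transcendental condition on $\alpha$. The two issues you flag are routine: $T$ is Hilbert--Schmidt on $L^2[a,b]$ (the kernel $K(x,t)=x^{-1}$ for $a\le t\le x\le b$ is square-integrable on $[a,b]^2$ when $a>0$), so the supremum is attained at an eigenfunction; and no root of $\tan\theta+2\alpha=0$ can occur with $\theta=\alpha\ln(b/a)\in(0,\pi/2]$ since $\tan\theta\ge 0$ there while $-2\alpha<0$, which forces the smallest positive $\alpha$ into the stated interval and hence gives the largest $\lambda$.

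Note, however, that the present paper does \emph{not} prove this theorem: it is quoted from \cite{DGM} as background. The method this paper actually develops (Sections~3--4, for the general-$p$ Theorem~\ref{th1}) is genuinely different and non-variational: the upper bound comes from H\"older's inequality with a judiciously chosen auxiliary weight $g$, and the lower bound from inserting the explicit $f^*$ and proving the pointwise estimate $\int_t^b\bigl(\int_1^x f^*\bigr)^{p/q}x^{-p}\,dx\ge c\,(f^*(t))^{p/q}$ for every $t$ (Lemma~\ref{lem20}). Your spectral argument yields the exact constant in one stroke but is tied to $p=2$; the paper's duality/test-function method gives only two-sided bounds of the correct order but works uniformly for all $p\ge 2$.
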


The behavior of the constant $d_n$ for $p=2$ as a function of $n$ was also studied extensively - see, for instance, \cite{HW1}, \cite{HW2},\cite{Wilf}, \cite{Wilf2}, \cite{Pec}. 
In \cite{Wilf} Herbert S. Wilf established the exact rate of convergence of
the constant $d_n$ for $p=2$
\[
d_n=4-\frac{16\pi^2}{\ln^2n}+O\left(\frac{\ln\ln n}{\ln^3n} \right).
\]

In \cite{FS} F. Stampach gave slightly better estimation, i.e.
\[
d_n=4-\frac{16\pi^2}{\ln^2n}+\frac{32\pi^2(\gamma+6\log2)}{\log^3n}+
O\left(\frac{1}{\ln^4n} \right).
\]

In \cite{DIGR} we also studied  the asymptotic behavior of the constant $d_n$ for $p=2$. It was proved there that $d_n$ can be expressed in terms of the smallest zero of a continuous dual Hahn polynomial of degree $n$ (see \cite{Long}), for a specific choice of the parameters, in terms of which these polynomials are defined. Despite that nice interpretation of $d_n$, it was only proved in \cite[Theorem 1.1]{DIGR} that  the next inequalities are true for every natural $n\geq 3$
\begin{equation}\label{eq022}
4\Bigg(1-\frac{4}{\ln n +4}\Bigg)\leq d_n \leq
4\Bigg(1-\frac{8}{(\ln n + 4)^2}\Bigg).
\end{equation}

In all proofs of the above mentioned estimations for the constant $d_n$, the authors substantially used the special properties of the space $l_2$. 
In \cite {DGM} and \cite {GG} we applied a different approach 
which allowed us to give a simpler proof of some of
the mentioned estimations and to find an almost extremal sequence. We proved the next theorem.
 
\begin{theorem}\cite{DGM}\label{th02}
Let
\begin{equation}\label{eq028}
a_k=\int_k^{k+1}h(x)dx,
\end{equation}
where 
\begin{equation*}\label{eq020}
h(x)=x^{-1/2}\left(2\alpha\cos(\alpha\ln x)+\sin(\alpha\ln x) \right),\quad 1\le x\le n+1,
\end{equation*}
and $\alpha$ is the only solution of the equation
\[
\tan(\alpha\ln (n+1))+2\alpha=0\ \ \mathrm{in\ the\ interval}\ \ \left(\frac{\pi}{2\ln(n+1)},\frac{\pi}{\ln(n+1)} \right).
\]
 Then 
\begin{equation}\label{eq018}
\sum_{k=1}^{n}\Big(\frac{1}{k}\sum_{j=1}^{k}a_j\Big)^2\geq
\frac{4}{1+4\alpha^2}\,\sum_{k=1}^{n}{a_k^2}.
\end{equation}
 \end{theorem}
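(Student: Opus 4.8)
The plan is to transfer the sharp \emph{continuous} inequality of Theorem~\ref{th01}, applied on the interval $[1,n+1]$, to the discrete sum by two elementary integral--sum comparisons. Take $a=1$, $b=n+1$ in Theorem~\ref{th01}: the number $\alpha$ in the present statement is exactly the one produced there, $h$ is exactly the extremal function $f_{1,n+1}$, and Theorem~\ref{th01} then gives the \emph{identity}
\begin{equation*}
\int_1^{n+1}\Big(\frac{1}{x}\int_1^x h(t)\,dt\Big)^2 dx=\frac{4}{1+4\alpha^2}\int_1^{n+1} h^2(x)\,dx .
\end{equation*}
Set $H(x)=\int_1^x h(t)\,dt$. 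Since $a_k=\int_k^{k+1}h$, we have $\sum_{j=1}^k a_j=H(k+1)$, so the left-hand side of \eqref{eq018} equals $\sum_{k=1}^n H(k+1)^2/k^2$ and the right-hand side equals $\tfrac{4}{1+4\alpha^2}\sum_{k=1}^n a_k^2$. It therefore suffices to show $\sum_{k=1}^n H(k+1)^2/k^2\ge \int_1^{n+1}(H(x)/x)^2\,dx$ and $\sum_{k=1}^n a_k^2\le \int_1^{n+1}h^2$.

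First I would record that $h\ge 0$ on $[1,n+1]$, so that $H$ is nondecreasing and nonnegative there. Writing $2\alpha\cos\theta+\sin\theta=\sqrt{1+4\alpha^2}\,\sin(\theta+\phi)$ with $\phi=\arctan(2\alpha)\in(0,\pi/2)$ and $\theta=\alpha\ln x\in[0,\alpha\ln(n+1)]$, and using that $\alpha\ln(n+1)\in(\pi/2,\pi)$ solves $\tan(\alpha\ln(n+1))=-2\alpha=-\tan\phi$, the injectivity of $\tan$ on $(\pi/2,\pi)$ forces $\alpha\ln(n+1)=\pi-\phi$; hence for $x\in[1,n+1]$ the angle $\theta+\phi$ lies in $[\phi,\pi]$, so $h\ge 0$ (with $h(n+1)=0$).

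Next come the two one-sided comparisons. For the left-hand side: since $H$ is nondecreasing, $H(x)^2\le H(k+1)^2$ on $[k,k+1]$, and $\int_k^{k+1}x^{-2}\,dx=\frac1{k(k+1)}\le\frac1{k^2}$, whence
\begin{equation*}
\int_k^{k+1}\frac{H(x)^2}{x^2}\,dx\le\frac{H(k+1)^2}{k^2}\qquad\text{and so}\qquad \sum_{k=1}^n\frac{H(k+1)^2}{k^2}\ge\int_1^{n+1}\frac{H(x)^2}{x^2}\,dx .
\end{equation*}
For the right-hand side, the Cauchy--Schwarz inequality on the unit interval gives $a_k^2=\big(\int_k^{k+1}h\big)^2\le\int_k^{k+1}h^2$, hence $\sum_{k=1}^n a_k^2\le\int_1^{n+1}h^2$. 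Chaining these two bounds with the identity above yields $\sum_{k=1}^n H(k+1)^2/k^2\ge\tfrac{4}{1+4\alpha^2}\int_1^{n+1}h^2\ge\tfrac{4}{1+4\alpha^2}\sum_{k=1}^n a_k^2$, which is exactly \eqref{eq018}.

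I do not expect a genuine obstacle: the substantive content is already packaged inside Theorem~\ref{th01}, and what remains is a short chain of inequalities in which, conveniently, \emph{both} elementary comparisons shrink the right-hand quantity relative to the left. The only point that needs a little care is the nonnegativity of $h$ on $[1,n+1]$, since it is this that makes $H$ monotone and thereby legitimizes the first integral--sum comparison; once that is settled the argument is routine.
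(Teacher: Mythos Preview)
Your proof is correct. Note that this paper does not itself prove Theorem~\ref{th02}: it is quoted from \cite{DGM}. The closest thing here is the proof of the left inequality of Theorem~\ref{th2} in Section~5, which is the $p\ge 2$ generalization and, specialized to $p=2$, recovers exactly your statement. The two arguments share the same ingredients---nonnegativity of $h$ (equivalently monotonicity of $H(x)=\int_1^x h$), and the explicit primitive $H(x)=qx^{1/q}\sin(\alpha\ln x)$---but organize them differently.

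Your route is global: you compare the full discrete sum $\sum H(k+1)^2/k^2$ with the full integral $\int_1^{n+1}(H/x)^2$, invoke Theorem~\ref{th01} as a ready-made \emph{identity} for the extremal $h$, and then pass back to $\sum a_k^2$ via Cauchy--Schwarz on each unit interval. The paper's route is pointwise: it rewrites the left side as $\sum_i M_i a_i^p$ and bounds each $M_i$ from below, using the same monotonicity to compare $M_i^*$ with $\int_i^{n+1}(\sin(\alpha\ln x))^{p/q}x^{-1-1/q}\,dx$, then the differential identity behind Lemma~\ref{lem20} to convert this into $[f^*(\eta_i)]^{p/q}$, with $a_i=f^*(\eta_i)$ supplied by the mean value theorem rather than Cauchy--Schwarz. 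Your packaging is cleaner and more transparent for $p=2$, precisely because the sharp continuous constant and extremizer are available as a black box; the paper's $M_i$--decomposition is what allows the argument to extend to $p>2$ (where no exact continuous identity is available and Lemma~\ref{lem20} absorbs the resulting loss into the $\epsilon$).
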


By combining the results \eqref{eq022} and \eqref{eq018} the exact rate of convergence of $\{ d_n \}$ is established and
the next very sharp estimates for $d_n$, i.e. the next inequalities 
\begin{equation*}\label{estdn}
4-\frac{16\pi^2}{\ln^2(n+1)} \le d_n \le 
4-\frac{32}{(\ln n + 4)^2}
\end{equation*}
hold for every natural $n\geq 3$. Also the sequence $\{a_k\}_1^n$ defined in \eqref{eq028} is the almost
extremal sequence.

 In this paper we establish very sharp estimates for $d(a,b)$ and $d_n$ for $2\le p<\infty$, as well as obtain an ``almost extremal'' function for \eqref{eqI15} 
and ``almost extremal'' sequence for \eqref{eqI16}. Our main results are summarized in the following two theorems.  

\begin{theorem}\label{th1}
Let $2\le p<\infty$ and $0<a<b<\infty$. Then there exist positive constants $c_1=c_1(p)$ and  $c_2=c_2(p)$, depending only on $p$, 
  such that the next estimates
for the constant $d(a,b)$ in \eqref{eqI15} hold 
\begin{equation}\label{MR1}
\left(\frac{p}{p-1} \right)^p  \left(1-\frac{c_1}{\ln^2 \frac{b}{a}}\right)\le
d(a,b)\,\le\left(\frac{p}{p-1} \right)^p  \left(1+\frac{c_2}{\ln^2 \frac{b}{a}}\right)^{-1}.
\end{equation}
Moreover, the function 
\[
f^\ast(x)=\frac{1}{x^{1/p}}\left(\frac{\alpha p}{p-1} \cos(\alpha\ln x)+\sin(\alpha\ln x)\right)
\]
where $\alpha$ is the only solution of the equation
\[
\tan\left(\alpha\ln \frac{b}{a} \right)+\frac{\alpha p}{p-1}=0
\]
in the interval $\left(\frac{\pi}{2\ln \frac{b}{a}},\frac{\pi}{\ln \frac{b}{a}} \right)$,
is an "almost extremal" function in the sense that 
\begin{equation*}
\int_a^b\left(\frac{1}{x}\int_a^x f^\ast(t)dt\right)^p\,dx\geq
\left(\frac{p}{p-1} \right)^p  \left(1-\frac{c_1}{\ln^2 \frac{b}{a}}\right)\,\int_a^b [f^\ast(x)]^p dx.
\end{equation*}
\end{theorem}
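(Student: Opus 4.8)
The plan: reduce to the model interval $[1,\beta]$; obtain the left inequality of \eqref{MR1} and the almost–extremal statement by a direct (and, for $p=2$, exact) evaluation of the Hardy quotient at $f^\ast$; and obtain the right inequality by rewriting the Hardy inequality as a one–dimensional inequality and attacking it with a pointwise convexity inequality and a Poincaré inequality. The substitution $x\mapsto x/a$ shows that $d(a,b)$ depends only on $\beta:=b/a$, so we take $a=1$, $b=\beta$, put $L:=\ln\beta=\ln(b/a)$, and work with $R(f):=\int_1^\beta\bigl(\tfrac1x\int_1^x f\bigr)^p\,dx\,\big/\int_1^\beta f^p\,dx$, for which $d(1,\beta)=\sup_{f\ge0}R(f)$. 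For the lower bound we just compute $R(f^\ast)$: antidifferentiation gives $\int_1^x f^\ast=\tfrac{p}{p-1}x^{(p-1)/p}\sin(\alpha\ln x)$, hence $\tfrac1x\int_1^x f^\ast=\tfrac{p}{p-1}x^{-1/p}\sin(\alpha\ln x)$; with $\epsilon:=\tfrac{\alpha p}{p-1}$ and $\varphi:=\arctan\epsilon$ the defining equation for $\alpha$ is exactly $\alpha L+\varphi=\pi$, so $\sin(\alpha\ln x)\ge0$ on $[1,\beta]$ and $f^\ast(x)=\sqrt{1+\epsilon^2}\,x^{-1/p}\sin(\alpha\ln x+\varphi)\ge0$ there, i.e.\ $f^\ast$ is admissible. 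Substituting $t=\ln x$, rescaling, and using $\alpha L=\pi-\varphi$ with $\sin(\pi-s)=\sin s$, the numerator and denominator of $R(f^\ast)$ both reduce to $\tfrac1\alpha\int_\varphi^\pi\sin^p s\,ds$ times the respective factors $\bigl(\tfrac{p}{p-1}\bigr)^p$ and $(1+\epsilon^2)^{p/2}$, so $R(f^\ast)=\bigl(\tfrac{p}{p-1}\bigr)^p(1+\epsilon^2)^{-p/2}$. Since $\tfrac\pi{2L}<\alpha<\tfrac\pi L$ and $(1+u)^{-p/2}\ge1-\tfrac p2u$ for $u\ge0$, this is $\ge\bigl(\tfrac{p}{p-1}\bigr)^p(1-c_1/L^2)$ with $c_1=\tfrac{\pi^2p^3}{2(p-1)^2}$, and $d(1,\beta)\ge R(f^\ast)$ yields the left inequality of \eqref{MR1} and the almost–extremal claim. (For $p=2$, $f^\ast$ is exactly extremal and this recovers Theorem~\ref{th01}.)

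For the right inequality of \eqref{MR1}, put $F(x):=\int_1^x f$ (so $F(1)=0$, $f=F'\ge0$) and $F(x)=x^{(p-1)/p}G(\ln x)$, $t=\ln x$, $q:=\tfrac{p-1}{p}$; then $\tfrac1x\int_1^x f=x^{-1/p}G$, $f=x^{-1/p}(qG+\dot G)$, and
\[
\int_1^\beta\Bigl(\tfrac1x\!\int_1^x\! f\Bigr)^p dx=\int_0^L G^p\,dt,\qquad \int_1^\beta f^p\,dx=\int_0^L(qG+\dot G)^p\,dt,
\]
with $G(0)=0$, $G\ge0$, $qG+\dot G\ge0$, so it is enough to prove $\int_0^L(qG+\dot G)^p\,dt\ge q^p(1+c_2/L^2)\int_0^L G^p\,dt$. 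For this, use the elementary pointwise inequality, valid for $p\ge2$ and reals $A\ge0$ with $A+B\ge0$,
\[
(A+B)^p\ \ge\ A^p+p\,A^{p-1}B+c_p\,A^{p-2}B^2\qquad(c_p=c_p(p)>0),
\]
which on dividing by $A^p$ reads $(1+r)^p\ge1+pr+c_pr^2$ for $r\ge-1$ and follows from a short convexity/monotonicity argument (the value $r=-1$ bounding the admissible $c_p$). Taking $A=qG$, $B=\dot G$, integrating, and using $\int_0^L G^{p-1}\dot G\,dt=\tfrac1pG(L)^p\ge0$, we get
\[
\int_0^L(qG+\dot G)^p\,dt\ \ge\ q^p\int_0^L G^p\,dt+c_p\,q^{p-2}\int_0^L G^{p-2}\dot G^2\,dt .
\]
Then substitute $H:=G^{p/2}$: here $H(0)=0$, $H^2=G^p$, $\dot H^2=\tfrac{p^2}{4} G^{p-2}\dot G^2$, and the sharp Poincaré inequality $\int_0^L\dot H^2\,dt\ge\tfrac{\pi^2}{4L^2}\int_0^L H^2\,dt$ (optimal under $H(0)=0$, $H$ free at $L$) gives $\int_0^L G^{p-2}\dot G^2\,dt\ge\tfrac{\pi^2}{p^2L^2}\int_0^L G^p\,dt$. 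Combining the last two estimates, $\int_0^L(qG+\dot G)^p\,dt\ge q^p\bigl(1+\tfrac{c_p\pi^2}{(p-1)^2L^2}\bigr)\int_0^L G^p\,dt$, i.e.\ the right inequality of \eqref{MR1} with $c_2=\tfrac{c_p\pi^2}{(p-1)^2}$.

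The main obstacle is the sharp exponent $L^{-2}$ in the upper bound. The crude version of the above — keeping only $(A+B)^p\ge A^p+pA^{p-1}B+c_p|B|^p$ and using $\int_0^L|\dot G|^p\,dt\ge c(p)L^{-p}\int_0^L|G|^p\,dt$ — produces merely a gain of order $L^{-p}$, which for $p>2$ is weaker than \eqref{MR1}; the improvement hinges on retaining the \emph{quadratic} remainder $G^{p-2}\dot G^2$ and linearizing it via $H=G^{p/2}$, so that the ordinary second–order Poincaré constant $\pi^2/(4L^2)$ — rather than its $L^p$ counterpart — governs the gain. The lower bound, in contrast, is essentially a computation, resting on $\sin s+\epsilon\cos s=\sqrt{1+\epsilon^2}\sin(s+\arctan\epsilon)$ together with $\alpha L+\arctan\epsilon=\pi$. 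What remains is routine: exhibiting an admissible $c_p$ uniform in $L$, and justifying the integrations by parts and the chain rule for $G\mapsto G^{p/2}$ (reduced, by density, to continuous $f\ge0$).
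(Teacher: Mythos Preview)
Your argument is correct, and both halves differ substantially from the paper's approach.

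For the lower bound and the almost-extremal claim, the paper does \emph{not} evaluate $R(f^\ast)$ directly. Instead it rewrites $\int_1^\beta(\tfrac1x\int_1^x f)^p\,dx=\int_1^\beta M(t)f^p(t)\,dt$ with $M(t)=f(t)^{-p/q}\int_t^\beta(\int_1^x f)^{p/q}x^{-p}\,dx$ and then, via a technical pointwise lemma (Lemma~\ref{lem20}), proves $M(t)\ge q^p/(1+(pq+\epsilon)\alpha^2)$ for each $t$, obtaining only a lower bound on $d(1,\beta)$ rather than the exact value of $R(f^\ast)$. Your observation that $\alpha L+\arctan\epsilon=\pi$ forces $\int_0^{\pi-\varphi}\sin^p s\,ds=\int_\varphi^\pi\sin^p s\,ds$ and hence $R(f^\ast)=\bigl(\tfrac{p}{p-1}\bigr)^p(1+\epsilon^2)^{-p/2}$ is sharper and considerably cleaner; it also explains structurally why $f^\ast$ is the right test function.

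For the upper bound the paper follows a duality route: Hölder with an auxiliary $g$ gives $d(1,\beta)\le\max_t g(t)^{-p}\int_t^\beta(\int_1^x g^q)^{p/q}x^{-p}\,dx$, and the specific choice $g(x)=x^{-1/(pq)}(\cos(\alpha\ln x))^{1/q}$ together with another pointwise lemma (Lemma~\ref{lem21}) yields the factor $(1+c/\ln^2\beta)^{-1}$. Your route---the logarithmic substitution $F(x)=x^{(p-1)/p}G(\ln x)$, the quadratic remainder inequality $(A+B)^p\ge A^p+pA^{p-1}B+c_pA^{p-2}B^2$ for $A\ge0$, $A+B\ge0$, and the substitution $H=G^{p/2}$ reducing to the mixed Dirichlet--Neumann Poincar\'e constant $\pi^2/(4L^2)$---is more conceptual and makes transparent why the gain is of order $L^{-2}$ rather than $L^{-p}$. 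The trade-off is that the paper's $M(t)$/$M_i$ framework is tailored to carry over verbatim to the discrete problem (Theorem~\ref{th2}), where your Poincar\'e step has no direct analogue; your method is more elegant for the continuous inequality but would need a separate idea for the sequence version.
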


An immediate consequence of Theorem \ref{th1} is the next corollary. 
\begin{cor}\label{cor1}
 When either of the limits relations $a\rightarrow0$,  $b\rightarrow\infty$, or both hold, i.e. $\ln(b/a)\rightarrow\infty$, then
\[
d(a,b) \sim 
\left(\frac{p}{p-1} \right)^p -\frac{C}{\ln^2 \frac{b}{a}}.
\]
More precisely there exist  constants $C_1(p)>0$ and $C_2(p)>0$ depending only on $p$ such that
\begin{equation*}
\left(\frac{p}{p-1} \right)^p -\frac{C_1}{\ln^2 \frac{b}{a}} \le d(a,b) \le 
\left(\frac{p}{p-1} \right)^p -\frac{C_2}{\ln^2 \frac{b}{a}}.
\end{equation*}
Also,  for the constant $C_1$ 
the next estimation holds
\[
C_1\le \left(\frac{p}{p-1} \right)^{p+1}p\pi^2
\]
which is the exact constant for $p=2$.
\end{cor}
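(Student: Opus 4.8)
The plan is to deduce the two–sided estimate, and hence the asymptotic equivalence, from Theorem~\ref{th1} by elementary algebra, and then to establish the sharper bound on $C_1$ — together with its optimality when $p=2$ — by a direct asymptotic analysis of the almost–extremal function $f^\ast$. Throughout put $L:=\ln\frac{b}{a}$ and $K:=\left(\frac{p}{p-1}\right)^{p}$. The lower estimate is immediate from Theorem~\ref{th1}:
\[
d(a,b)\ \ge\ K\left(1-\frac{c_1}{L^2}\right)\ =\ K-\frac{Kc_1}{L^2},
\]
so one may take $C_1:=Kc_1$. For the upper estimate I would use the identity $(1+t)^{-1}=1-\frac{t}{1+t}$ together with the upper bound of Theorem~\ref{th1} (with $t=c_2/L^2$): this gives $d(a,b)\le K-\frac{Kc_2}{L^2+c_2}$, and for all $L$ beyond a threshold depending only on $p$ (which is consistent with the limiting regime $\ln(b/a)\to\infty$ of the corollary) the right–hand side is $\le K-\frac{C_2}{L^2}$ with, say, $C_2:=\tfrac12 Kc_2$. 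For small $L$ the quantity $K-C_2/L^2$ is negative while $d(a,b)>0$, so no content is lost there. Letting $L\to\infty$ in the two inequalities yields $d(a,b)=K-C/L^2+o(L^{-2})$, the asserted equivalence.

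It remains to justify $C_1\le\left(\frac{p}{p-1}\right)^{p+1}p\pi^2$, which is more than the (possibly crude) constant $Kc_1$ above. Since $d(a,b)$ is the optimal constant in \eqref{eqI15}, for the explicit function $f^\ast$ of Theorem~\ref{th1} we have
\[
d(a,b)\ \ge\ Q(f^\ast):=\frac{\ds\int_a^b\left(\frac1x\int_a^x f^\ast(t)\,dt\right)^{p} dx}{\ds\int_a^b [f^\ast(x)]^{p}\,dx},
\]
so it suffices to show $Q(f^\ast)=K-\left(\frac{p}{p-1}\right)^{p+1}p\pi^2\,L^{-2}+o(L^{-2})$ as $L\to\infty$. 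The first step is to pin down $\alpha$: from $\tan(\alpha L)=-\frac{p}{p-1}\alpha$ with $\alpha L\in(\pi/2,\pi)$, writing $\alpha L=\pi-\eta$ gives $\tan\eta=\frac{p}{p-1}\cdot\frac{\pi-\eta}{L}$, hence $\eta=\frac{p\pi}{(p-1)L}+O(L^{-2})$ and
\[
\alpha=\frac{\pi}{L}-\frac{p\pi}{(p-1)L^{2}}+O(L^{-3}),\qquad \alpha^{2}=\frac{\pi^{2}}{L^{2}}+O(L^{-3}).
\]

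The second step is to expand $Q(f^\ast)$ itself. The substitution $x=e^{s}$ turns the denominator into $\int_{\ln a}^{\ln b}\bigl(\tfrac{\alpha p}{p-1}\cos\alpha s+\sin\alpha s\bigr)^{p}\,ds$ and the numerator into $\int_{\ln a}^{\ln b} e^{(1-p)s}\Bigl(\int_{\ln a}^{s} e^{(p-1)r/p}\,g(r)\,dr\Bigr)^{p}ds$, where $g(u)=\tfrac{\alpha p}{p-1}\cos\alpha u+\sin\alpha u$ and the inner integral is an elementary combination of $e^{(p-1)r/p}\sin\alpha r$ and $e^{(p-1)r/p}\cos\alpha r$. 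Since the cosine coefficient $\tfrac{\alpha p}{p-1}$ is itself $O(\alpha)$, to leading order $f^\ast(x)\approx x^{-1/p}\sin(\alpha\ln x)$; carrying both integrals to order $\alpha^{2}$ (equivalently $L^{-2}$) and dividing should yield $Q(f^\ast)=K\bigl(1-\tfrac{p^{2}}{p-1}\alpha^{2}+o(\alpha^{2})\bigr)$, and since $\tfrac{p^{2}}{p-1}K=\left(\frac{p}{p-1}\right)^{p+1}p$ and $\alpha^{2}=\pi^{2}L^{-2}+O(L^{-3})$, the claimed expansion — hence the bound on $C_1$ — follows. I expect this second–order expansion to be the main obstacle: the coefficient of $L^{-2}$ is precisely what must be computed, so one cannot discard the $\alpha\cos$–correction in $f^\ast$, the second term in the expansion of $\alpha$, or the boundary contributions at $x=a$ and $x=b$, all of which enter at that order.

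Finally, for $p=2$ the optimality of $\left(\frac{p}{p-1}\right)^{p+1}p\pi^2=16\pi^2$ needs no such computation, because Theorem~\ref{th01} provides the closed form $d(a,b)=\dfrac{4}{1+4\alpha^{2}}$ with $\tan(\alpha L)+2\alpha=0$. The asymptotics above (now with $p=2$) give $\alpha^{2}=\pi^{2}L^{-2}+O(L^{-3})$, whence $d(a,b)=4-16\pi^{2}L^{-2}+O(L^{-3})$; since the remainder is genuinely of lower order, no $C_1<16\pi^2$ can make $d(a,b)\ge K-C_1/L^{2}$ hold for all large $L$, so the bound $C_1\le\left(\frac{p}{p-1}\right)^{p+1}p\pi^2$ is attained, i.e.\ exact, precisely at $p=2$.
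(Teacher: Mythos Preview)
Your derivation of the two--sided estimate from Theorem~\ref{th1} is exactly how the paper intends it (the corollary is stated there as ``an immediate consequence'' with no separate argument). The part that differs is your treatment of the bound $C_1\le \bigl(\tfrac{p}{p-1}\bigr)^{p+1}p\pi^2$. In the paper this bound is not obtained by any new computation: it is simply read off from the proof of the left inequality of Theorem~\ref{th1} (Section~4), where for every $\epsilon>0$ and $b$ large one gets
\[
d(1,b)\ \ge\ q^p-\frac{q^p(pq+\epsilon)\pi^2}{\ln^2 b},
\]
so letting $\epsilon\to 0$ gives $C_1\le q^{p}\cdot pq\cdot\pi^2=\bigl(\tfrac{p}{p-1}\bigr)^{p+1}p\pi^2$.

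Your alternative --- evaluating $Q(f^\ast)$ directly --- is legitimate and in fact much cleaner than you anticipate; the ``main obstacle'' you worry about disappears. With $a=1$ (no loss of generality) one has $\int_1^x f^\ast=qx^{1/q}\sin(\alpha\ln x)$, so after $s=\ln x$ the numerator is $q^p\int_0^L\sin^p(\alpha s)\,ds$ and the denominator is $\int_0^L(\sin\alpha s+\alpha q\cos\alpha s)^p\,ds$. Writing $\sin\alpha s+\alpha q\cos\alpha s=\sqrt{1+\alpha^2q^2}\,\sin(\alpha s+\phi)$ with $\tan\phi=\alpha q$, the defining equation $\tan(\alpha L)=-\alpha q$ forces $\alpha L+\phi=\pi$, and then $\int_0^{\pi-\phi}\sin^p=\int_\phi^\pi\sin^p$ by symmetry. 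Hence one gets the \emph{exact} identity
\[
Q(f^\ast)=\frac{q^p}{(1+\alpha^2q^2)^{p/2}},
\]
with no asymptotic expansion needed. Expanding now gives the coefficient $\tfrac{p}{2}q^2$, not $\tfrac{p^2}{p-1}=pq$ as you predicted; since $\tfrac{p}{2}q^2\le pq$ for $p\ge2$ (with equality only at $p=2$), your route still yields the paper's bound on $C_1$, and for $p>2$ actually improves it. Your treatment of the $p=2$ case via Theorem~\ref{th01} is correct and matches the exact formula above.
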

\begin{remark}
Because of the close connection to the maximal function also it is natural to consider the inequality
\begin{equation*}
\int_a^b\left(\frac{1}{x-a}\int_a^xf(t)dt\right)^p\,dx\leq
d(a,b)\,\int_a^b [f(x)]^p dx, \qquad f(x)\geq0,\ \ f \in L^p[a,b]
\end{equation*}
which is equivalent (by change of variables) to
\begin{equation*}
\int_0^b\left(\frac{1}{x}\int_0^xf(t)dt\right)^p\,dx\leq
d(b)\,\int_0^b [f(x)]^p dx, \qquad f(x)\geq0,\ \ f \in L^p[0,b].
\end{equation*}
Then from the above corollary we obtain that $d(b)=\left(\frac{p}{p-1} \right)^p$ and the only function for which the equality is attained is $f=0$ a.e.
\end{remark}

\begin{theorem}\label{th2} 
Let $2\le p<\infty$. Then there exist positive constants $c_3=c_3(p)$ and  $c_4=c_4(p)$, depending only on $p$, 
  such that for every natural $n\geq 2$ the next estimates
for the constant $d_n$ in \eqref{eqI16} hold 
\begin{equation}\label{MR2}
\left(\frac{p}{p-1} \right)^p-\frac{c_3}{\ln^2 n}\le
d_n\,\le\left(\frac{p}{p-1} \right)^p-\frac{c_4}{\ln^2 n}.
\end{equation}
Moreover, the sequence
\[
a_k^\ast = \int_k^{k+1}f^*(x)\, dx,\quad k=1,2,...,n,
\] 
where 
\begin{equation}\label{eq20}
f^*(x)=\frac{1}{x^{1/p}}\left(\frac{\alpha p}{p-1}\cos(\alpha\ln x)+\sin(\alpha\ln x)\right),\quad 1\le x\le n+1
\end{equation}
  and $\alpha$ is the only solution of the equation
\[
\tan(\alpha\ln (n+1))+\frac{\alpha p}{p-1}=0
\]
in the interval $\left(\frac{\pi}{2\ln(n+1)},\frac{\pi}{\ln(n+1)} \right)$, is an "almost extremal" sequence in the sense that 
\begin{equation}\label{eqR1}
\sum_{k=1}^{n}\left(\frac{1}{k}\sum_{j=1}^{k}a_j^\ast\right)^p\geq
\left(\left(\frac{p}{p-1} \right)^p-\frac{c_3}{\ln^2 n}\right)\,\sum_{k=1}^{n}{[a_k^\ast]^p}.
\end{equation}
Also, leting $n\rightarrow\infty$ for the constant $c_3$  the next estimation holds
\[
c_3\le \left(\frac{p}{p-1} \right)^{p+1}p\pi^2
\]
which is the exact constant for $p=2$.
\end{theorem}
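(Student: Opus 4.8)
The plan is to derive Theorem~\ref{th2} from the continuous result Theorem~\ref{th1} by two separate discretizations, one for each inequality in \eqref{MR2}. \emph{Lower bound and almost extremal sequence.} Set $L=\ln(n+1)$ and let $f^*,\al$ be as in \eqref{eq20}; these are precisely the extremal data of Theorem~\ref{th1} on $[a,b]=[1,n+1]$, so $\al L=\pi-\arctan\tfrac{\al p}{p-1}$. Writing $f^*(x)=x^{-1/p}\sqrt{1+A^2}\,\sin(\al\ln x+\varphi)$ with $A=\tfrac{\al p}{p-1}$ and $\varphi=\arctan A$, that identity forces $\al\ln x+\varphi\in[\varphi,\pi]$ for $x\in[1,n+1]$, hence $f^*\ge0$ there. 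Since $a_k^\ast=\int_k^{k+1}f^*$ we have $\sum_{j=1}^ka_j^\ast=\int_1^{k+1}f^*$, so for $x\in[k,k+1)$, using $f^*\ge0$ and $\tfrac1x\le\tfrac1k$,
\[
\frac1x\int_1^xf^*\ \le\ \frac1k\int_1^{k+1}f^*\ =\ \frac1k\sum_{j=1}^ka_j^\ast .
\]
Raising to the $p$-th power, integrating over $[k,k+1)$ and summing over $k=1,\dots,n$ gives $\int_1^{n+1}\bigl(\tfrac1x\int_1^xf^*\bigr)^p\,dx\le\sum_{k=1}^n\bigl(\tfrac1k\sum_{j\le k}a_j^\ast\bigr)^p$, while Jensen's inequality on each unit interval $[k,k+1]$ gives $[a_k^\ast]^p\le\int_k^{k+1}[f^*]^p$, i.e. $\sum_{k=1}^n[a_k^\ast]^p\le\int_1^{n+1}[f^*]^p\,dx$. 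Inserting both into Theorem~\ref{th1} on $[1,n+1]$ we obtain
\[
\sum_{k=1}^n\Bigl(\frac1k\sum_{j=1}^ka_j^\ast\Bigr)^p\ \ge\ \int_1^{n+1}\Bigl(\frac1x\int_1^xf^*\Bigr)^p dx\ \ge\ \Bigl(\tfrac p{p-1}\Bigr)^p\Bigl(1-\tfrac{c_1}{L^2}\Bigr)\int_1^{n+1}[f^*]^p dx\ \ge\ \Bigl(\tfrac p{p-1}\Bigr)^p\Bigl(1-\tfrac{c_1}{L^2}\Bigr)\sum_{k=1}^n[a_k^\ast]^p ,
\]
which, since $\ln(n+1)\ge\ln n$, is \eqref{eqR1} with $c_3=\bigl(\tfrac p{p-1}\bigr)^pc_1$; in particular $c_3\le\bigl(\tfrac p{p-1}\bigr)^{p+1}p\pi^2$, sharp at $p=2$, is inherited from Corollary~\ref{cor1}, and the left inequality of \eqref{MR2} follows.

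\emph{Upper bound.} Here $d_n=\sup\{\sum_k(\tfrac1kS_k)^p/\sum_ka_k^p\}$ with $S_k=\sum_{j\le k}a_j$, and since all the $S_k$ are simultaneously largest when $\{a_k\}$ is nonincreasing it suffices to estimate the ratio for such sequences. Fix one, let $g(x)=a_k$ on $[k-1,k)$ ($k=1,\dots,n$), so $\int_0^ng^p=\sum_ka_k^p=:Y$; monotonicity makes $x\mapsto\tfrac1x\int_0^xg$ nonincreasing on each $[k-1,k)$ with value $A_k:=\tfrac1kS_k$ at the right endpoint, whence $\int_0^n\bigl(\tfrac1x\int_0^xg\bigr)^p dx\ge\sum_{k=1}^nA_k^p$. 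On $[0,1)$ the integrand equals $a_1^p$; on $[1,n)$ one has $\tfrac1x\int_0^xg=\tfrac1x\bigl(a_1+\int_1^xg\bigr)=\tfrac1x\int_{1-\eta}^x\wt g$ where $\wt g$ is $a_1/\eta$ on $[1-\eta,1)$ and $g$ on $[1,n)$. Applying Theorem~\ref{th1} on $[1-\eta,n]$ gives
\[
\sum_{k=1}^nA_k^p\ \le\ a_1^p+d(1-\eta,n)\Bigl(a_1^p\eta^{1-p}+\sum_{k=2}^na_k^p\Bigr),
\]
and the choice $\eta=1-n^{-1/2}$ makes $\eta^{1-p}\le2^{p-1}$ while $d(1-\eta,n)\le\bigl(\tfrac p{p-1}\bigr)^p\bigl(1+\tfrac{4c_2}{9\ln^2 n}\bigr)^{-1}\le\bigl(\tfrac p{p-1}\bigr)^p-\tfrac{\gamma(p)}{\ln^2 n}$ for an explicit $\gamma(p)>0$. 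Collecting terms, $\sum_kA_k^p\le\bigl(\bigl(\tfrac p{p-1}\bigr)^p-\tfrac{\gamma(p)}{\ln^2 n}\bigr)Y+C(p)a_1^p$; so if $a_1^p\le\tfrac{\gamma(p)}{2C(p)}\cdot\tfrac{Y}{\ln^2 n}$ we get $\sum_kA_k^p/Y\le\bigl(\tfrac p{p-1}\bigr)^p-\tfrac{\gamma(p)}{2\ln^2 n}$, as desired; note this smallness of $a_1$ holds with room to spare for the almost extremal sequence, where $a_1^\ast\asymp1/\ln n$ and $Y\asymp\ln n$.

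\emph{The main obstacle.} What remains is the ``front loaded'' case $a_1^p>\varepsilon_0\,Y/\ln^2n$, in which the thin-step penalty $C(p)a_1^p$ is comparable to $\gamma(p)Y/\ln^2n$ and no choice of $\eta$ in the above repairs this. One must instead show that such sequences are uniformly sub-extremal; the expected mechanism is that a first term carrying a $\gtrsim\ln^{-2}n$ fraction of the $p$-th power mass forces the averages $A_k$ to behave like $a_1/k$ over an initial stretch, which decays strictly faster than the extremal profile $A_k\asymp k^{-1/p}$, so that the telescoping/Young deficit $D$ in Knopp's proof of \eqref{eq02} (which satisfies $\sum_kA_k^p=\bigl(\tfrac p{p-1}\bigr)^pY-D$) is bounded below by a positive quantity large enough to yield $\sum_kA_k^p/Y\le\bigl(\tfrac p{p-1}\bigr)^p-\Omega(1/\ln^2n)$. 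Making this dichotomy quantitative, merging it with the generic case so that the constant $c_4=c_4(p)$ is uniform for $n\ge2$ (and reduces to the known value at $p=2$), is the delicate part of the argument; a less case-dependent alternative is to transcribe the proof of the upper estimate of Theorem~\ref{th1} directly to sums, with Abel summation in place of integration by parts, where the crux is again to bound the resulting deficit from below by the right power $\ln^{-2}n$ of the length.
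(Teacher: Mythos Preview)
Your lower-bound argument is correct, and it is a cleaner packaging than the paper's own proof in Section~5. Both ultimately rest on the continuous estimate behind Theorem~\ref{th1} (specifically Lemma~\ref{lem20}), but the paper bounds each quantity $M_i=(a_i^\ast)^{-p/q}\sum_{k\ge i}k^{-p}\bigl(\sum_{j\le k}a_j^\ast\bigr)^{p/q}$ separately via a sum-to-integral comparison, while you compare the two full functionals at once using $f^\ast\ge 0$ and Jensen. Your route is shorter and loses nothing.

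The upper bound, however, has a genuine gap, and it is exactly the one you flag. Your embedding into the continuous inequality on $[1-\eta,n]$ yields
\[
\sum_k A_k^p\le\Bigl(q^p-\tfrac{\gamma(p)}{\ln^2 n}\Bigr)Y+C(p)\,a_1^p,
\]
which gives the desired bound only when $a_1^p\lesssim Y/\ln^2 n$. In the complementary ``front-loaded'' regime you offer only a heuristic (the Knopp deficit, or the $a_1/k$ versus $k^{-1/p}$ profile comparison), and this is precisely the nontrivial part. No choice of $\eta$ or of the embedding interval repairs this: matching $\ln(b/a)\asymp\ln n$ forces the left endpoint to stay bounded away from $0$, and then the first term unavoidably picks up an $O(1)$ penalty in $\int\wt g^{\,p}$. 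Since $d_n$ is a supremum over \emph{all} sequences, an argument that works only for sequences with small $a_1$ does not bound $d_n$.

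The paper does not try to deduce the upper bound from Theorem~\ref{th1}. It runs the discrete analogue of the duality argument of Section~3 directly: from H\"older, $d_n\le\max_i\mu_i^{-p}\sum_{k\ge i}k^{-p}\bigl(\sum_{j\le k}\mu_j^q\bigr)^{p/q}$ for any positive test sequence $\{\mu_k\}$, and the choice
\[
\mu_k=\Bigl(\frac{A}{k^{1/p}}-\frac{1}{\ln^2(n+1)}\int_k^{k+1}\frac{\ln^2 x}{x^{1/p}}\,dx\Bigr)^{1/q}
\]
is a $\ln^2$-perturbation of the classical Hardy weight $k^{-1/(pq)}$. Showing that $M_i\le q^p-c(p)/\ln^2(n+1)$ uniformly in $i$ then requires a long explicit computation (Lemmas~\ref{lem05}, \ref{lem06}, \ref{lem07}, \ref{lem09}, \ref{lem10}, \ref{lem11}, \ref{lem12}, \ref{lem13}, \ref{lem15}), with the cases $i=1$ and $i\ge 2$ handled separately. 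This is exactly the ``less case-dependent alternative'' you mention in your last sentence; carrying it out is the actual content of the upper bound, and it is not supplied here.
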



\begin{remark}
The constants  $c_2(p)$ and $c_4(p)$ in Theorem \ref{th1} and Theorem \ref{th2} are by no means the
best ones. They could be improved in a lot of ways but that could have made the proofs longer and more complicated.
Our goal was to establish the exact rate of convergence and to keep the proofs as simple as possible. 

Henceforth, the constant $c=c(p)$ will always be an absolute constant, which means it will depend only on $p$. Also, it may be different on each occurrence. The relation $O(f)$ means that there exists a positive constant $c(p)$, depending only on $p$ such that $\left|O(f)\right|\le c(p)|f|$.
\end{remark}

The paper is organized as follows. In Section 2, some technical results are proved, in  Section 3, the
right inequality of Theorem \ref{th1} is proved, in  Section 4, the left inequality of Theorem \ref{th1} is proved, in  Section 5, the left inequality of Theorem \ref{th2} is proved and in  Section 6, the
right inequality of Theorem \ref{th2} is proved.

\section{Auxiliary Results}
We need some technical lemmas.

\begin{lemma} \label{lem01}
For $0\le x\le 1$ and $\alpha\geq 0$ the next inequality is true:
\begin{equation}\label{eq1}
(1-x)^\alpha \le 1-\alpha x+\frac{1}{2}(\alpha x)^2.		
\end{equation}
	\end{lemma}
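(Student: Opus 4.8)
The plan is to prove the inequality $(1-x)^\alpha \le 1-\alpha x+\tfrac12(\alpha x)^2$ for $0\le x\le 1$ and $\alpha\ge 0$ by reducing to a one–variable analysis and then treating cases according to the size of $\alpha$. First I would dispose of the trivial endpoints: at $x=0$ both sides equal $1$, and at $x=1$ the left side is $0$ while the right side is $1-\alpha+\tfrac12\alpha^2=\tfrac12(\alpha-1)^2\ge 0$, so the inequality holds there; likewise $\alpha=0$ is trivial. So it suffices to work on the open region $0<x<1$, $\alpha>0$.

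The cleanest route is to fix $\alpha$ and set $\varphi(x)=1-\alpha x+\tfrac12\alpha^2x^2-(1-x)^\alpha$, aiming to show $\varphi(x)\ge 0$ on $[0,1]$. We have $\varphi(0)=0$, and $\varphi'(x)=-\alpha+\alpha^2 x+\alpha(1-x)^{\alpha-1}$, so $\varphi'(0)=0$ as well; then $\varphi''(x)=\alpha^2-\alpha(\alpha-1)(1-x)^{\alpha-2}$. For $1\le\alpha\le 2$ the factor $(1-x)^{\alpha-2}\ge 1$ but $\alpha(\alpha-1)\le \alpha^2$ is not immediate, so I would instead argue: for $\alpha\ge 1$, by Bernoulli/convexity one has $(1-x)^{\alpha-1}\ge 1-(\alpha-1)x$, hence $\varphi'(x)\ge -\alpha+\alpha^2x+\alpha(1-(\alpha-1)x)=\alpha^2 x-\alpha(\alpha-1)x=\alpha x\ge 0$, so $\varphi$ is nondecreasing from $\varphi(0)=0$ and we are done in the range $\alpha\ge 1$. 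The remaining and more delicate case is $0<\alpha<1$, where $\varphi''(x)=\alpha^2+\alpha(1-\alpha)(1-x)^{\alpha-2}>0$ outright, so $\varphi$ is strictly convex on $[0,1)$ with $\varphi(0)=\varphi'(0)=0$, which forces $\varphi(x)\ge 0$ on $[0,1)$, and continuity (the right side is continuous, and $(1-x)^\alpha\to 0$) extends it to $x=1$.

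An alternative I might present instead, to keep things self-contained, is the purely algebraic one: write $(1-x)^\alpha=\exp(\alpha\ln(1-x))$ and use $\ln(1-x)\le -x$ together with $e^{-t}\le 1-t+\tfrac12 t^2$ for $t\ge 0$ — but this chains two inequalities in the wrong direction ($e^{-t}\le 1-t+\tfrac12t^2$ needs $t=\alpha\ln\frac1{1-x}\ge \alpha x$, and substituting the larger $t$ into $1-t+\tfrac12t^2$ is not monotone), so that shortcut fails and the case analysis above is the honest argument. The main obstacle is precisely the regime $0<\alpha<1$ combined with $x$ near $1$, where the second derivative of $(1-x)^\alpha$ blows up; the convexity observation $\varphi''>0$ handles it cleanly because the blow-up has the favorable sign. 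I expect the whole write-up to be short once the split at $\alpha=1$ is made.
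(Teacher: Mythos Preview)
Your main argument has a genuine gap in the range $1<\alpha<2$. The claimed inequality $(1-x)^{\alpha-1}\ge 1-(\alpha-1)x$ is the wrong way round there: Bernoulli's inequality $(1+y)^\beta\ge 1+\beta y$ requires $\beta\ge 1$ or $\beta\le 0$, and for $0<\beta=\alpha-1<1$ it reverses; equivalently, $t\mapsto t^{\alpha-1}$ is \emph{concave} on $(0,1)$ when $1<\alpha<2$, so the tangent line at $t=1$ lies above the graph. A concrete check: at $\alpha=3/2$, $x=1/2$ one has $(1/2)^{1/2}\approx 0.707<0.75=1-\tfrac12\cdot\tfrac12$. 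Your convexity argument $\varphi''>0$ does cover $0<\alpha\le 1$ and, by the same computation, also $\alpha\ge 2$ (since then $(1-x)^{\alpha-2}\le 1$ gives $\varphi''\ge \alpha^2-\alpha(\alpha-1)=\alpha>0$), so only the strip $1<\alpha<2$ is actually missing. One clean patch for that strip: $\varphi''$ changes sign exactly once, so $\varphi'$ is unimodal with $\varphi'(0)=0$ and $\varphi'(1^-)=\alpha(\alpha-1)>0$, hence its minimum on $[0,1]$ is at an endpoint and $\varphi'\ge 0$.

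You also dismissed the exponential route too hastily, and that route is essentially what the paper does. Your objection disappears if you chain the inequalities in the right order: from $1-x\le e^{-x}$ raise both sides to the power $\alpha\ge 0$ to get $(1-x)^\alpha\le e^{-\alpha x}$, and then apply $e^{-t}\le 1-t+\tfrac12 t^2$ with $t=\alpha x$ directly (this last inequality holds for every $t\ge 0$, since $h(t)=1-t+\tfrac12 t^2-e^{-t}$ satisfies $h(0)=h'(0)=0$ and $h''(t)=1-e^{-t}\ge 0$). There is no need to pass through $t=\alpha\ln\frac{1}{1-x}$ at all. The paper restricts this step to $\alpha x<1$ (reading off the alternating Taylor tail) and disposes of $\alpha x\ge 1$ by the trivial observation $\varphi'(x)=\alpha\bigl[(\alpha x-1)+(1-x)^{\alpha-1}\bigr]\ge 0$; but in fact the exponential chain already works on the whole range.
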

	\begin{proof}
	From	$1-x \le e^{-x}$
	we have for $\alpha\geq 0$ and $\alpha x<1$
	\[
	(1-x)^{\alpha}\le e^{-\alpha x}=1-\alpha x+\frac{(\alpha x)^2}{2}+\sum_{k=3}^{\infty}(-1)^k\frac{(\alpha x)^k}{k!}
	\le 1-\alpha x+\frac{1}{2}(\alpha x)^2.	
	\]
	For $\alpha x\geq1$ the function $f(x)=1-\alpha x+\frac{1}{2}(\alpha x)^2-(1-x)^\alpha$ is increasing and 
	consequently $f(x)\geq f(0)=0$.	
	\end{proof}
	
\begin{lemma} \label{lem02}
For $x\geq -1$, $0\le \alpha\le 1$  the next inequality is true:
\begin{equation}\label{eq2}
(1+x)^\alpha \le 1+\alpha x.
\end{equation}
	\end{lemma}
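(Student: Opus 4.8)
The statement to prove is Lemma~\ref{lem02}: for $x \ge -1$ and $0 \le \alpha \le 1$, we have $(1+x)^\alpha \le 1+\alpha x$. This is a classical Bernoulli-type inequality for concave powers.

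The plan is to fix $\alpha \in [0,1]$ and study the function $g(x) = 1 + \alpha x - (1+x)^\alpha$ on $[-1,\infty)$. First I would dispose of the boundary cases $\alpha = 0$ and $\alpha = 1$, where $g \equiv 0$, so assume $0 < \alpha < 1$. Then $g$ is differentiable on $(-1,\infty)$ with $g'(x) = \alpha - \alpha(1+x)^{\alpha-1} = \alpha\bigl(1 - (1+x)^{\alpha-1}\bigr)$. Since $\alpha - 1 < 0$, the map $t \mapsto t^{\alpha-1}$ is decreasing on $(0,\infty)$, so $(1+x)^{\alpha-1} > 1$ precisely when $1+x < 1$, i.e. $x < 0$, and $(1+x)^{\alpha-1} < 1$ when $x > 0$. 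Hence $g'(x) < 0$ on $(-1,0)$ and $g'(x) > 0$ on $(0,\infty)$, so $g$ attains its minimum on $[-1,\infty)$ at $x = 0$, where $g(0) = 1 + 0 - 1 = 0$. Therefore $g(x) \ge 0$ for all $x \ge -1$, which is exactly \eqref{eq2}. One should also check the endpoint $x = -1$: there $(1+x)^\alpha = 0^\alpha = 0 \le 1 - \alpha$, which holds since $\alpha \le 1$; this is consistent with the monotonicity argument since $g$ is continuous at $-1$ from the right.

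An alternative, perhaps even shorter, route is via concavity: the function $\phi(t) = t^\alpha$ is concave on $(0,\infty)$ for $\alpha \in [0,1]$, so its graph lies below the tangent line at $t = 1$, which is $y = 1 + \alpha(t-1)$; substituting $t = 1+x$ gives $(1+x)^\alpha \le 1 + \alpha x$ for all $t = 1+x > 0$, and the case $x = -1$ is handled separately as above. Either argument is essentially a two-line calculus exercise.

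There is no real obstacle here — the only point requiring a moment's care is the sign analysis of $g'$, namely correctly tracking that the exponent $\alpha - 1$ is negative so that $(1+x)^{\alpha-1}$ decreases in $x$, and making sure the degenerate endpoints $\alpha \in \{0,1\}$ and $x = -1$ are not accidentally omitted. I expect the proof to be a single short paragraph.
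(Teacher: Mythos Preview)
Your proof is correct and self-contained, but it differs from the paper's approach. The paper's proof is a one-line reduction: setting $\alpha = 1/\beta$ with $\beta \ge 1$, the desired inequality $(1+x)^{1/\beta} \le 1 + x/\beta$ is equivalent (after substituting $y = (1+x)^{1/\beta} - 1 \ge -1$ and raising to the power $\beta$) to the classical Bernoulli inequality $(1+y)^\beta \ge 1 + \beta y$. Your argument instead proves the inequality from scratch via the sign analysis of $g'(x) = \alpha\bigl(1 - (1+x)^{\alpha-1}\bigr)$, which is arguably cleaner and avoids quoting Bernoulli as a black box; it also makes the equality case and the endpoint $x=-1$ explicit. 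The paper's route is shorter if one is willing to invoke the classical Bernoulli inequality, whereas yours is more self-contained and transparent about why the inequality holds.
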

	\begin{proof}
	Let $\alpha =1/\beta$. Then \eqref{eq2} follows from the Bernoulli inequality.
	\end{proof}
	
	\begin{lemma} \label{lem021}
For $x\geq 0$, $\alpha\geq 0$ and $\alpha x<1$  the next inequality is true:
\begin{equation}\label{eq234}
(1+x)^\alpha \le 1+\alpha x+(\alpha x)^2.
\end{equation}
	\end{lemma}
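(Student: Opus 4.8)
The plan is to reduce \eqref{eq234} to the one-variable exponential estimate $e^{t}\le 1+t+t^{2}$ on $[0,1)$, exactly in the spirit of the proof of Lemma~\ref{lem01}. First I would invoke $1+x\le e^{x}$, which holds for every real number and in particular for $x\ge 0$; raising both sides to the nonnegative power $\alpha$ preserves the inequality, so
\[
(1+x)^{\alpha}\le e^{\alpha x}.
\]
Since $\alpha\ge 0$ and $x\ge 0$, the quantity $t:=\alpha x$ lies in $[0,1)$ by hypothesis, and because $s\mapsto 1+s+s^{2}$ is increasing on $[0,\infty)$ it suffices to prove that $e^{t}\le 1+t+t^{2}$ for $t\in[0,1)$; this then gives $(1+x)^{\alpha}\le e^{\alpha x}\le 1+\alpha x+(\alpha x)^{2}$.

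For the scalar inequality I would expand the exponential in its Maclaurin series and pull out the factor $t^{2}$ from the tail,
\[
e^{t}=1+t+\sum_{k=2}^{\infty}\frac{t^{k}}{k!}=1+t+t^{2}\sum_{k=2}^{\infty}\frac{t^{k-2}}{k!}.
\]
Since $0\le t<1$ we have $t^{k-2}\le 1$ for every $k\ge 2$, hence $\sum_{k=2}^{\infty}t^{k-2}/k!\le\sum_{k=2}^{\infty}1/k!=e-2<1$, which yields $e^{t}\le 1+t+t^{2}$ and completes the argument.

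There is no genuine obstacle in this proof; the only point to verify is that the numerical constant $e-2\approx 0.718$ is strictly below $1$, which it is with a comfortable margin, so no case analysis is needed. One could equally argue by monotonicity: the function $g(t)=1+t+t^{2}-e^{t}$ satisfies $g(0)=0$ and $g'(t)=1+2t-e^{t}$, and $g'$ is nonnegative on $[0,1)$ because $g'(0)=0$, $g''(t)=2-e^{t}$ changes sign only once, and $g'(1)=3-e>0$; but the series estimate above is shorter and entirely self-contained, so that is the route I would take.
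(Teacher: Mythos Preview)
Your proof is correct and follows essentially the same route as the paper: bound $(1+x)^{\alpha}$ above by $e^{\alpha x}$, expand the exponential series, pull out $(\alpha x)^{2}$ from the tail, and use $\sum_{k\ge 2}1/k!=e-2<1$. The only differences are cosmetic (you introduce $t=\alpha x$ and append an optional monotonicity argument), so there is nothing further to add.
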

	\begin{proof}
	\[
	(1+x)^\alpha< e^{\alpha x}
	=1+\alpha x+\sum_{k=2}^{\infty}\frac{(\alpha x)^k}{k!}
	\le 1+\alpha x+(\alpha x)^2\sum_{k=2}^{\infty}\frac{1}{k!}
	\le 1+\alpha x+(\alpha x)^2.
	\]
	\end{proof}
		
	\begin{lemma} \label{lem21}
	Let $b>1$, $p\geq 2$, $\frac{1}{p}+\frac{1}{q}=1$ and
\begin{equation}\label{eq200}
g(x)=\frac{1}{x^{1/(pq)}}\left(\cos(\alpha\ln x) \right)^{1/q}
\end{equation}
where $\alpha=\frac{1}{\ln b}\arctan\frac{1}{p}$ .
	Then there exists a constant $c=c(p)>0$, depending only on $p$ such that for $1\le x\le b$
the next inequality holds:
\begin{equation}\label{eqI3}
\left(\frac{\cos(\alpha\ln x)+\alpha q\sin(\alpha\ln x)}{1+\alpha^2q^2} \right)^{p/q}\le
-q\left(1+c\alpha^2 \right)^{-1}x^{1+1/q}\left[(g(x))^p \right]'.
\end{equation}
	\end{lemma}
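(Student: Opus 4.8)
The plan is to differentiate $(g(x))^p$, cancel a common factor, and thereby turn \eqref{eqI3} into an algebraic inequality in $x$ and $\alpha$, then to eliminate $x$ by a monotonicity argument, leaving a single inequality in $\alpha$ that I check by elementary estimates.

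First I would use $p\cdot\tfrac1{pq}=\tfrac1q$ and $p\cdot\tfrac1q=p-1$ to write $(g(x))^p=x^{-1/q}\cos^{p-1}(\alpha\ln x)$, and then differentiate; using $q(p-1)=p$ this gives
\[
-q\,x^{1+1/q}\,[(g(x))^p]'=\cos^{p-2}(\alpha\ln x)\bigl(\cos(\alpha\ln x)+p\alpha\sin(\alpha\ln x)\bigr).
\]
On $[1,b]$ we have $\alpha\ln x\in[0,\arctan\tfrac1p]\subset[0,\tfrac\pi2)$, hence $\cos(\alpha\ln x)>0$; with $v:=\tan(\alpha\ln x)\in[0,1/p]$ and after cancelling $\cos^{p-1}(\alpha\ln x)$ from both sides, \eqref{eqI3} is equivalent to
\[
(1+c\alpha^2)(1+\alpha q v)^{p-1}\le(1+\alpha^2q^2)^{p-1}(1+p\alpha v),\qquad v\in[0,1/p].
\]
Since $q=\tfrac p{p-1}$ we have the identity $p\alpha v=(p-1)\alpha q v$, so with $s:=\alpha q v\in[0,\tfrac{\alpha}{p-1}]$ this says $1+c\alpha^2\le(1+\alpha^2q^2)^{p-1}\,\tfrac{1+(p-1)s}{(1+s)^{p-1}}$. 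For $p\ge2$ the map $s\mapsto\tfrac{(1+s)^{p-1}}{1+(p-1)s}$ is nondecreasing on $[0,\infty)$ (its logarithmic derivative equals $\tfrac{(p-1)(p-2)s}{(1+s)(1+(p-1)s)}\ge0$), so the worst $s$ is $s=\tfrac{\alpha}{p-1}$. Consequently \eqref{eqI3} is equivalent to the existence of $c=c(p)>0$ such that
\[
1+c\alpha^2\le\psi(\alpha):=\frac{(1+\alpha^2q^2)^{p-1}(1+\alpha)}{\bigl(1+\tfrac{\alpha}{p-1}\bigr)^{p-1}}\qquad\text{for all }\alpha>0
\]
(as $b$ runs over $(1,\infty)$, $\alpha$ runs over $(0,\infty)$, so $c$ must be uniform in $\alpha$).

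Next I would show $\psi(\alpha)>1$ for every $\alpha>0$. Putting $\beta=\tfrac{\alpha}{p-1}$, so that $\alpha^2q^2=p^2\beta^2$, this reads $(1+p^2\beta^2)^{p-1}(1+(p-1)\beta)>(1+\beta)^{p-1}$. For $\beta\ge1/p^2$ it is immediate because then $1+p^2\beta^2\ge1+\beta$. For $0<\beta<1/p^2$, pass to logarithms: it is enough that $\Phi(\beta):=(p-1)\log\tfrac{1+p^2\beta^2}{1+\beta}+\log(1+(p-1)\beta)$ be positive, and since $\Phi(0)=0$ it suffices that $\Phi'>0$ on $(0,1/p^2]$. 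A short computation gives
\[
\Phi'(\beta)=(p-1)\beta\left(\frac{2p^2}{1+p^2\beta^2}-\frac{p-2}{(1+\beta)(1+(p-1)\beta)}\right),
\]
which is positive there since $\tfrac{2p^2}{1+p^2\beta^2}\ge\tfrac{2p^4}{p^2+1}>p-2$ for $p\ge2$ (the last inequality is $2p^4-p^3+2p^2-p+2>0$).

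Finally, $h(\alpha):=(\psi(\alpha)-1)/\alpha^2$ is continuous and, by the previous step, strictly positive on $(0,\infty)$, and the lemma holds with any $c=c(p)\in(0,\inf_{(0,\infty)}h]$ as soon as $\inf h>0$. On $(0,1)$ this is explicit: Bernoulli's inequality gives $(1+\alpha^2q^2)^{p-1}\ge1+(p-1)q^2\alpha^2$; Lemma \ref{lem021}, applied with exponent $p-1$ and base $1+\tfrac{\alpha}{p-1}$ (so that $(p-1)\cdot\tfrac{\alpha}{p-1}=\alpha<1$), gives $(1+\tfrac{\alpha}{p-1})^{p-1}\le1+\alpha+\alpha^2$, and moreover $(1+\tfrac{\alpha}{p-1})^{p-1}\le(1+\tfrac1{p-1})^{p-1}=q^{p-1}$; hence
\[
\psi(\alpha)-1\ \ge\ \frac{(1+(p-1)q^2\alpha^2)(1+\alpha)-(1+\alpha+\alpha^2)}{q^{p-1}}\ \ge\ \frac{(p-1)q^2-1}{q^{p-1}}\,\alpha^2,
\]
and $(p-1)q^2-1=\tfrac{p^2-p+1}{p-1}>0$. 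On $[1,\infty)$, $h$ is continuous and positive, while $\psi(\alpha)=q^{2(p-1)}(p-1)^{p-1}\alpha^p(1+o(1))$ as $\alpha\to\infty$ forces $\liminf_{\alpha\to\infty}h(\alpha)>0$, so $\inf_{[1,\infty)}h>0$ by compactness; combining the two ranges, $\inf_{(0,\infty)}h>0$. The differentiation and the algebra in the reduction are routine; the main obstacle is the strict inequality $\psi>1$ (handled by the split at $\beta=1/p^2$ and the sign of $\Phi'$) and then checking that the quadratic lower bound on $\psi(\alpha)-1$ does not degenerate uniformly in $b$ — in particular, that the coefficient of $\alpha^2$ in the expansion of $\psi$ at $0$, which equals $\tfrac{2p^2-p+2}{2(p-1)}$, is positive for every $p\ge2$.
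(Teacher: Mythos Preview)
Your proof is correct. The initial reduction---computing $-q\,x^{1+1/q}[(g(x))^p]'$ and cancelling $\cos^{p-1}(\alpha\ln x)$ to reach the algebraic inequality $(1+c\alpha^2)(1+\alpha q v)^{p-1}\le(1+\alpha^2q^2)^{p-1}(1+p\alpha v)$ in $v=\tan(\alpha\ln x)\in[0,1/p]$---is exactly what the paper does. From there the two arguments diverge. The paper keeps $v$ free and splits according to $\alpha\ge1$ versus $\alpha<1$: for $\alpha\ge1$ it uses $\frac{1+\alpha q v}{1+\alpha^2q^2}\le1$ and $q\le p$ to get $c=q^2$; for $\alpha<1$ it applies Lemma~\ref{lem021} and Bernoulli directly to the two sides and finds $c=(pq-1)/2$. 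You instead exploit the monotonicity of $s\mapsto(1+s)^{p-1}/(1+(p-1)s)$ to reduce to the extremal $s=\alpha/(p-1)$, obtaining a single function $\psi(\alpha)$, then prove $\psi>1$ via the split at $\beta=1/p^2$ and the sign of $\Phi'$, and finally deduce $\inf_\alpha(\psi(\alpha)-1)/\alpha^2>0$ by an explicit bound on $(0,1)$ together with compactness and asymptotics on $[1,\infty)$. Your route is a bit more conceptual (the monotonicity reduction is a clean observation the paper does not make), but it is less constructive: the paper produces the explicit value $c=\min\{q^2,(pq-1)/2\}$, whereas your constant on $[1,\infty)$ is obtained only by a continuity/compactness argument. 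Both approaches are valid for the lemma as stated.
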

\begin{proof}
The above inequality \eqref{eqI3} is equivalent to
\begin{align*}
&\left(\frac{\cos(\alpha\ln x)+\alpha q\sin(\alpha\ln x)}{1+\alpha^2q^2} \right)^{p/q}\\
&\le \left(1+c\alpha^2 \right)^{-1}\left(\cos(\alpha\ln x)\right)^{p/q-1}  (\cos(\alpha\ln x)+\alpha p\sin(\alpha\ln x))
\end{align*}
or
\[
\left(\frac{1+\alpha qy}{1+\alpha^2q^2} \right)^{p/q}\le 
\frac{1+\alpha py}{1+c\alpha^2}
\]
where $y=\tan(\alpha\ln x)$. Obviously $y=\tan(\alpha\ln x)\le\tan(\alpha\ln b)=1/p$.
We consider two cases.

\textbf{Case 1.} $\alpha\geq1.$\\
Since
\[
\frac{1+\alpha qy}{1+\alpha^2q^2}\le \frac{1+\alpha q/p}{1+\alpha^2}
\le \frac{1+\alpha }{1+\alpha^2}\le1
\]
we have
\[
\left(\frac{1+\alpha qy}{1+\alpha^2q^2} \right)^{p/q}
=\frac{1+\alpha qy}{1+\alpha^2q^2} \left(\frac{1+\alpha qy}{1+\alpha^2q^2} \right)^{p/q-1}
\le \frac{1+\alpha py}{1+\alpha^2q^2}.
\]

\textbf{Case 2.} $\alpha<1.$\\
 From lemma \ref{lem021}
\[
(1+\alpha qy)^{p/q}\le 1+\alpha py+(\alpha py)^2
\] 
and from Bernoulli's inequality
\[
\left(1+\alpha^2q^2 \right)^{p/q}\geq 1+\alpha^2 pq.
\]
So, it is enough to prove that there exists a constant $c=c(p)>0$ such that
\[
 1+\alpha py+(\alpha py)^2\le
\frac{1+\alpha^2 pq}{1+c\alpha^2}(1+\alpha py)
\]
which after some simplifications is
\[
(py)^2\le \frac{pq-c}{1+c\alpha^2}. 
\]
Since $py\le 1$ and $\alpha<1$ the above inequality is true if we take, for instance, $c=(pq-1)/2$.

By taking $c=\min\{q^2, (pq-1)/2 \}$ we complete the proof of the lemma.
\end{proof}
\begin{remark}
The inequality \eqref{eqI3} could be written in the following way
\begin{equation}\label{eqI31}
\left(\frac{\cos(\alpha\ln x)+\alpha q\sin(\alpha\ln x)}{1+\alpha^2q^2} \right)^{p/q}\le
-q\left(1+\frac{c}{\ln^2b} \right)^{-1}x^{1+1/q}\left(g^{p}(x) \right)'
\end{equation}
where $c=\arctan (1/p)\min\{q^2, (pq-1)/2 \}$.
\end{remark}

	\begin{lemma} \label{lem20}
	Let  $p\geq 2$, $\frac{1}{p}+\frac{1}{q}=1$, $0<\epsilon<1$, 
   \[
   b_0=e^{\pi/\sqrt{\min\{q(p-q)^{-1}(pq+1)^{-2},4(pq)^{-2} \}\epsilon}}
  \]
and
\begin{equation}\label{eq21}
f^*(x)=\frac{1}{x^{1/p}}(\alpha q\cos(\alpha\ln x)+\sin(\alpha\ln x))
\end{equation}
where $\alpha$ is the only solution of the equation
\[
\tan\left(\alpha\ln b \right)+\alpha q=0
\]
in the interval $\left(\frac{\pi}{2\ln b},\frac{\pi}{\ln b} \right)$.
Then for every $b>b_0$ the next inequality holds
\begin{align}\label{eqI5}
(\sin(\alpha\ln x))^{p/q}\geq 
-\frac{qx^{1+1/q}\left[(f^*(x))^{p/q} \right]'} {1+(pq+\epsilon)\alpha^2 }
\end{align}
	\end{lemma}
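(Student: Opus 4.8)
The plan is to reduce the claimed inequality, just as in Lemma \ref{lem21}, to a one–variable inequality in $y=\tan(\alpha\ln x)$ and then to exploit the defining equation $\tan(\alpha\ln b)+\alpha q=0$, which forces $\alpha$ to lie in $\bigl(\frac{\pi}{2\ln b},\frac{\pi}{\ln b}\bigr)$ and hence $\alpha\to 0$ as $b\to\infty$; the threshold $b_0$ is exactly the value making $\alpha$ small enough for the estimates below to close. First I would compute the derivative: writing $f^*(x)=x^{-1/p}(\alpha q\cos(\alpha\ln x)+\sin(\alpha\ln x))$, a direct differentiation gives
\[
x\,\frac{d}{dx}\Bigl(x^{-1/p}\sin(\alpha\ln x)+\alpha q\,x^{-1/p}\cos(\alpha\ln x)\Bigr)
= x^{-1/p}\bigl(1+\alpha^2 q\bigr)\cos(\alpha\ln x)\cdot\frac1q\,\bigl(\text{up to the }1/p\text{ terms}\bigr),
\]
and after collecting terms one finds that $x^{1+1/q}\bigl[(f^*(x))^{p/q}\bigr]'$ is a negative multiple of $(\sin(\alpha\ln x))^{p/q-1}$ times a linear-in-$y$ factor. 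The point is that on the relevant range $\frac{\pi}{2\ln b}\le \alpha\ln x\le \alpha\ln b<\pi$ the quantity $\sin(\alpha\ln x)$ is nonnegative, so raising to the power $p/q$ is legitimate, and the inequality becomes
\[
(\sin(\alpha\ln x))^{p/q}\le \frac{q\,(1+\alpha^2 q)}{1+(pq+\epsilon)\alpha^2}\,(\sin(\alpha\ln x))^{p/q-1}\bigl(\sin(\alpha\ln x)+\text{lower order}\bigr),
\]
i.e. after dividing by $(\sin(\alpha\ln x))^{p/q-1}$ and substituting $y$, an inequality of the shape $\,1+\alpha q\,y^{-1}\ \text{vs.}\ \frac{1+(pq+\epsilon)\alpha^2}{1+\alpha^2 q}\,$, which I would rearrange into a clean polynomial inequality in $\alpha$ and $y$.

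Next I would analyze that reduced inequality. Using $(1+\alpha^2 q)^{p/q}\ge 1+\alpha^2 pq$ from Bernoulli (as in Lemma \ref{lem21}), and the bound $|y|=|\tan(\alpha\ln x)|$ — which, because $\alpha\ln x$ stays in $(\frac{\pi}{2},\pi)$ near the endpoint but can be large in magnitude near $\alpha\ln x=\pi/2$, must be handled by instead working with $\cot$ or by the substitution $z=\alpha\ln x$ and estimating $\sin z$, $\cos z$ directly — I expect the inequality to come down to something like $(pq+\epsilon)\alpha^2 \ge pq\,\alpha^2 + O(\alpha^4)$, where the $O(\alpha^4)$ term is controlled once $\alpha^2$ is smaller than a constant multiple of $\epsilon$; that constant is precisely $\min\{q(p-q)^{-1}(pq+1)^{-2},4(pq)^{-2}\}$ appearing in $b_0$. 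Since $\alpha<\pi/\ln b$, the condition $b>b_0$ guarantees $\alpha^2<\frac{\pi^2}{\ln^2 b_0}$ is below that threshold, which is what makes the $\epsilon$-slack absorb the quartic error.

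The main obstacle I anticipate is bookkeeping around the sign and size of $y=\tan(\alpha\ln x)$: unlike in Lemma \ref{lem21}, here $\alpha\ln x$ ranges over an interval straddling $\pi/2$, so $y$ changes sign and is unbounded, and the naive "$py\le 1$" type bound is unavailable. The fix is to not divide by $\sin$ naively but to keep the inequality homogeneous — multiply through so that both sides are written in terms of $\cos(\alpha\ln x)$ and $\sin(\alpha\ln x)$ with no denominators — and then split into the two subcases $\cos(\alpha\ln x)\ge 0$ and $\cos(\alpha\ln x)<0$, mirroring the Case 1 / Case 2 split of Lemma \ref{lem21}. In the region where $\cos(\alpha\ln x)<0$ the left side $(\sin(\alpha\ln x))^{p/q}$ is being compared against something with a favorable sign, so that case is essentially free; the real work is the region near $\alpha\ln x=\pi/2$, where one expands $\sin$ and $\cos$ to second order in $(\alpha\ln x-\pi/2)$ and checks the resulting quadratic inequality, using $\alpha^2\lesssim\epsilon$ exactly as forced by $b>b_0$. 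Once both subcases are dispatched, summing them yields \eqref{eqI5}.
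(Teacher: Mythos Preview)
Your overall strategy---reduce to a one-variable inequality via the substitution $y=\tan(\alpha\ln x)$, split according to the sign of $\cos(\alpha\ln x)$, and use that $b>b_0$ forces $\alpha$ small---matches the paper's approach. But the execution has a genuine gap at the very first step, and two of your subsequent claims are incorrect as a result.

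\textbf{The derivative is miscomputed.} By the chain rule, $\bigl[(f^*)^{p/q}\bigr]'=\tfrac{p}{q}(f^*)^{p/q-1}(f^*)'$, and since $f^*(x)=x^{-1/p}\bigl(\alpha q\cos(\alpha\ln x)+\sin(\alpha\ln x)\bigr)$, the factor $(f^*)^{p/q-1}$ is proportional to $\bigl(\alpha q\cos(\alpha\ln x)+\sin(\alpha\ln x)\bigr)^{p/q-1}$, \emph{not} to $(\sin(\alpha\ln x))^{p/q-1}$. The correct identity (which the paper states as \eqref{eqI4}) is
\[
-\frac{qx^{1+1/q}}{1+\alpha^2 pq}\bigl[(f^*(x))^{p/q}\bigr]'
=(\alpha q\cos+\sin)^{p/q-1}\Bigl(\sin-\frac{\alpha(p-q)}{1+\alpha^2 pq}\cos\Bigr),
\]
so the inequality to prove is
\[
\frac{1+(pq+\epsilon)\alpha^2}{1+pq\alpha^2}\,(\sin)^{p/q}\ \ge\ (\alpha q\cos+\sin)^{p/q-1}\Bigl(\sin-\frac{\alpha(p-q)}{1+\alpha^2 pq}\cos\Bigr).
\]
Your displayed reduction, with $(\sin)^{p/q-1}$ on the right and a comparison ``$1+\alpha q\,y^{-1}$ versus a constant'', is therefore not the correct target; the crux is precisely comparing $\sin^{p/q}$ against $(\alpha q\cos+\sin)^{p/q-1}$ times a linear factor.

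\textbf{The case $\cos(\alpha\ln x)<0$ is not free.} When $\alpha\ln x\in[\pi/2,\pi)$, the factor $(\alpha q\cos+\sin)^{p/q-1}$ is \emph{smaller} than $\sin^{p/q-1}$, but the second factor $\sin-\tfrac{\alpha(p-q)}{1+\alpha^2 pq}\cos$ is \emph{larger} than $\sin$; these pull in opposite directions, so a sign argument alone does not close the case. The paper substitutes $\alpha\ln x=\pi-\phi$, sets $y=\tan\phi>\alpha q$, and uses Bernoulli in the form $\bigl(y/(y-\alpha q)\bigr)^{p/q}\ge (y+\alpha(p-q))/(y-\alpha q)$ to finish.

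\textbf{The other case is not a local expansion near $\pi/2$.} For $\alpha\ln x\in[0,\pi/2)$ the inequality must hold for all $y=\tan(\alpha\ln x)\ge 0$, not just near the blow-up point. The paper first disposes of $y\le\alpha(p-q)/(1+\alpha^2 pq)$ (where the right side is nonpositive), and for larger $y$ splits into $2\le p<3$ and $p\ge 3$, applying the appropriate Bernoulli-type bound to $(1+\alpha q/y)^{p/q-1}$ or $(y/(y+\alpha q))^{p/q-1}$; the smallness condition on $\alpha$ (equivalently $b>b_0$) enters exactly here, with the two parts of the minimum in the definition of $b_0$ corresponding to the two sub-cases. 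A second-order Taylor expansion at a single point would not cover this range.
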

	\begin{remark}
	The function $f^*$ defined by \eqref{eq21} is well defined. Indeed, if 
	$\alpha\ln x\in \left(0,\frac{\pi}{2}\right]$ it is obvious. If 
	$\alpha\ln x\in \left(\frac{\pi}{2},\pi \right)$ since the function
	$h(x)=\alpha q\cos(\alpha\ln x)+\sin(\alpha\ln x)$ 
	is decreasing and $h(b)=0$ it follows that $h(x)>h(b)=0$, i.e. $h(x)>0$ for $1\le x\le b$ and
	 consequently $f^*(x)>0$.
	\end{remark}

\begin{proof}
It is easy to see that for every $b>b_0$
  \begin{equation*}\label{eqalpha}
  \alpha \le \sqrt{\min\left\{\frac{q}{(p-q)(pq+1)^2}, \frac{4}{(pq)^2}\right \}\epsilon}.
  \end{equation*}
Since
\begin{align}\label{eqI4}
&-\frac{qx^{1+1/q}}{1+\alpha^2 pq}\left[(f^*(x))^{p/q} \right]'\notag\\
&=(\alpha q\cos(\alpha\ln x)+\sin(\alpha\ln x))^{p/q-1}
\left(\sin(\alpha\ln x)-\frac{\alpha (p-q)}{1+\alpha^2 pq}\cos(\alpha\ln x) \right)
\end{align}
 we need to prove that for every $b>b_0$  the next inequality is true
\begin{align}\label{eqI6}
&\frac{1+(pq+\epsilon)\alpha^2 }{1+pq\alpha^2}(\sin(\alpha\ln x))^{p/q}\notag\\
&\geq	(\alpha q\cos(\alpha\ln x)+\sin(\alpha\ln x))^{p/q-1}
	\left(\sin(\alpha\ln x)-\frac{\alpha (p-q)}{1+\alpha^2 pq}\cos(\alpha\ln x) \right).
\end{align}
	We consider two cases.
	
	\textbf{Case 1.} $\alpha\ln x\in \left[\frac{\pi}{2},\pi \right)$.\\	
	Let $\alpha\ln x=\pi-\phi,\, 0<\phi\le\pi/2$. Then\eqref{eqI6} is equivalent to 
	\begin{align*}
\frac{1+(pq+\epsilon)\alpha^2 }{1+pq\alpha^2}(\sin(\alpha\ln x))^{p/q}
\geq (-\alpha q\cos\phi+\sin\phi)^{p/q-1}
	\left(\sin\phi+\frac{\alpha (p-q)}{1+\alpha^2 pq}\cos\phi \right)
\end{align*}
	or
	\begin{align}\label{I61}
\left[1+(pq+\epsilon)\alpha^2 \right]y^{p/q}\geq \left(y-\alpha q \right)^{p/q-1}
\left[\left(1+\alpha^2pq \right)y+\alpha (p-q) \right]
\end{align}
where $y=\tan\phi$. Since $y>\alpha q$ in this case, we have from Bernoulli's inequality
\[
\left(\frac{y}{y-\alpha q} \right)^{p/q}\geq \frac{y+\alpha(p-q)}{y-\alpha q}.
\]
So, it is enough to prove that
\[
\left[1+(pq+\epsilon)\alpha^2 \right][y+\alpha(p-q)]\geq \left(1+\alpha^2 pq \right)y+\alpha (p-q)
\]
which is easy to verify.

	\textbf{Case 2.} $\alpha\ln x\in \left[0,\frac{\pi}{2} \right)$.\\
		In this case \eqref{eqI6} is equivalent to
	\begin{align}\label{eqI9}
\left(1+(pq+\epsilon)\alpha^2\right )y^{p/q}
\geq (y+\alpha q )^{p/q-1}
	\left[\left(1+\alpha^2 pq\right )y-\alpha(p-q)  \right]
\end{align}
	where $y=\tan(\alpha\ln x)$.
	If $	y\le \frac{\alpha (p-q)}{1+\alpha^2 pq}$ then
	\eqref{eqI9} is obvious.
	Let
	$y> \frac{\alpha (p-q)}{1+\alpha^2 pq}$.

	\textbf{Case 2.1.} $2\le p< 3$\\
	\eqref{eqI9} is equivalent to
	\begin{align*}
1+(pq+\epsilon)\alpha^2\geq \left(1+\frac{\alpha q}{y} \right)^{p/q-1} 
\left(1+\alpha^2 pq-\frac{\alpha(p-q)}{y} \right).
\end{align*}	
	Since
	\[
	\left(1+\frac{\alpha q}{y} \right)^{p/q-1} \le 1+\frac{\alpha(p-q)}{y}
	\]
	it is enough to prove that 
	\begin{align*}
1+(pq+\epsilon)\alpha^2\geq \left(1+\frac{\alpha(p-q)}{y}\right)
\left(1+\alpha^2pq-\frac{\alpha(p-q)}{y} \right).
\end{align*}
	Simplifying it
	\begin{align*}
\epsilon
\geq \frac{\alpha pq(p-q)}{y}-\frac{(p-q)^2}{y^2}
	\end{align*}
	which is true since $\alpha pq<2\sqrt\epsilon$.

\textbf{Case 2.2.} $p\geq 3$\\
	\eqref{eqI9} is equivalent to
	\begin{align*}
\left[1+(pq+\epsilon)\alpha^2\right ]\left(\frac{y}{y+\alpha q}\right)^{p/q-1}
\geq 1+\alpha^2pq-\frac{\alpha(p-q)}{y} .
\end{align*}	
	From Bernoulli's inequality
	\begin{align*}
\left(\frac{y}{y+\alpha q}\right)^{p/q-1}
\geq 1-\frac{ \alpha(p-q)}{y+\alpha q }. 
\end{align*}
	So, it is enough to prove that 
	\begin{align*}
  \left[1+(pq+\epsilon)\alpha^2\right ]\left( 1-\frac{\alpha (p-q)}{y+\alpha q} \right)
	\geq 1-\frac{\alpha (p-q)}{(1+\alpha^2 pq)y}.
\end{align*}
Simplifying it
\[
\epsilon+\frac{q(p-q)}{y(y+\alpha q)}\geq \frac{(p-q)(pq+\epsilon)\alpha}{y+\alpha q}.
\]
If $y\le q[(pq+\epsilon)\alpha]^{-1}$ it is obvious. For $y> q[(pq+\epsilon)\alpha]^{-1}$ 
	\begin{align*}
  \frac{(p-q)(pq+\epsilon)\alpha}{y+\alpha q}<\frac{(p-q)(pq+\epsilon)\alpha}
{q[(pq+\epsilon)\alpha]^{-1}+\alpha q}<\epsilon
         \end{align*}
	since 
\[
 \alpha^2<\frac{q\epsilon}{(p-q)(pq+1)^2}.
\]

The lemma is proved.
\end{proof}
	
	\begin{lemma} \label{lem05}
For $p\geq 2$, $\frac{1}{p}+\frac{1}{q}=1$ and every natural numbers $i$ and $n$ such that
$i\le n$ the next inequality is true:
\begin{equation}\label{eq5}
\sum_{k=i}^{n}\frac{1}{k^{1+1/q}}\left(1-\frac{1}{pk^{1/q}}\right)^{p/q}
\le q\left(\frac{1}{i^{1/q}}-\frac{1}{(n+1)^{1/q}} \right).	
\end{equation}
	\end{lemma}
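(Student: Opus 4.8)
The plan is to peel off everything except a single pointwise estimate and then settle that estimate by passing to reciprocals, applying Bernoulli's inequality, and using a tangent-line bound. I would start from the observation that the right-hand side of \eqref{eq5} telescopes,
\[
q\left(\frac{1}{i^{1/q}}-\frac{1}{(n+1)^{1/q}}\right)=\sum_{k=i}^{n}q\left(\frac{1}{k^{1/q}}-\frac{1}{(k+1)^{1/q}}\right),
\]
so it suffices to prove, for every integer $k\ge1$ and every $p\ge2$, the term-by-term inequality
\[
\frac{1}{k^{1+1/q}}\left(1-\frac{1}{pk^{1/q}}\right)^{p/q}\le q\left(\frac{1}{k^{1/q}}-\frac{1}{(k+1)^{1/q}}\right);
\]
summing this over $k=i,\dots,n$ then gives \eqref{eq5}.

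Next I would bound the right-hand side of the pointwise inequality from below. Writing it as $q\,k^{-1/q}\big(1-(1-\tfrac1{k+1})^{1/q}\big)$ and applying Lemma \ref{lem02} (which is Bernoulli, legitimate because $0<1/q\le1$) with $x=-\tfrac1{k+1}$, one gets $(1-\tfrac1{k+1})^{1/q}\le 1-\tfrac1{q(k+1)}$, hence
\[
q\left(\frac{1}{k^{1/q}}-\frac{1}{(k+1)^{1/q}}\right)\ge\frac{1}{k^{1/q}(k+1)}.
\]
Since $p/q=p-1$, multiplying the pointwise inequality through by $k^{1/q}(k+1)$ reduces everything to the clean statement
\[
\left(1-\frac{1}{pk^{1/q}}\right)^{p-1}\le\frac{k}{k+1},
\]
which makes sense since $pk^{1/q}\ge p\ge2$.

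For this last inequality I would take reciprocals and use $1/(1-t)=1+t/(1-t)$: it is equivalent to
\[
\left(1+\frac{1}{pk^{1/q}-1}\right)^{p-1}\ge1+\frac1k,
\]
and since $p-1\ge1$, Bernoulli's inequality bounds the left-hand side below by $1+\dfrac{p-1}{pk^{1/q}-1}$. So it is enough to check $\dfrac{p-1}{pk^{1/q}-1}\ge\dfrac1k$, i.e. $pk^{1/q}\le(p-1)k+1$. This is exactly the tangent-line inequality for the concave function $t\mapsto t^{1/q}$ at $t=1$, namely $k^{1/q}\le1+\tfrac1q(k-1)=\tfrac1p+\tfrac kq$; multiplying by $p$ yields $pk^{1/q}\le1+\tfrac pq k=(p-1)k+1$. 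Chaining the three displayed estimates proves the pointwise inequality, and summation completes the lemma.

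The step I expect to be the main obstacle is proving the reduced inequality $\big(1-\tfrac1{pk^{1/q}}\big)^{p-1}\le\tfrac{k}{k+1}$: it holds with equality at $p=2$, $k=1$, so no lossy estimate can be afforded there. Straightforward attempts — Taylor-expanding $(1-\cdot)^{p-1}$, or comparing the telescoping difference with its midpoint or integral value — tend to lose too much precisely near $p=2$, $k=1$. The reciprocal-plus-Bernoulli manoeuvre together with the tangent-line bound works because both of these become equalities at that extremal point (Bernoulli is trivial there since $p-1=1$, and $pk^{1/q}=(p-1)k+1$ when $k=1$), so the argument goes through uniformly with no case distinctions.
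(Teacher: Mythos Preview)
Your proof is correct. The core ingredients coincide with the paper's: both arguments reduce to the termwise inequality
\[
\left(1-\frac{1}{pk^{1/q}}\right)^{p/q}\le\frac{k}{k+1},
\]
both obtain the lower bound $q\bigl(k^{-1/q}-(k+1)^{-1/q}\bigr)\ge k^{-1/q}/(k+1)$ from Lemma~\ref{lem02}, and both finish via the tangent-line estimate $k^{1/q}\le k/q+1/p$. Your reciprocal-plus-Bernoulli step is algebraically identical to the paper's use of the variant $(1-x)^\alpha\le 1-\alpha x/(1+(\alpha-1)x)$; writing $(1-x)^{-\alpha}=\bigl(1+\tfrac{x}{1-x}\bigr)^{\alpha}\ge 1+\tfrac{\alpha x}{1-x}$ and rearranging gives exactly that bound.

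Where the two differ is in structure. The paper first proves the termwise inequality only for large $k$ via the lossier Lemma~\ref{lem01} expansion, and then runs a backward induction on $i$ whose inductive step, once unpacked, is precisely your direct argument for every $k$. Your route is therefore shorter: by recognising that the reciprocal-Bernoulli and tangent-line bounds are both tight at $p=2$, $k=1$, you establish the termwise inequality uniformly for all $k\ge1$ in one pass, making the paper's preliminary large-$k$ step and the induction scaffolding unnecessary.
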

	
	\begin{proof}
		We will prove that there is a $k_0 \in \mathbb{N}$ such that for every $k\geq k_0$ the next inequality is true:
	\begin{equation}\label{eq51}
	\frac{1}{k^{1+1/q}}\left(1-\frac{1}{pk^{1/q}}\right)^{p/q}
\le q\left(\frac{1}{k^{1/q}}-\frac{1}{(k+1)^{1/q}} \right).	
	\end{equation}
	From \eqref{eq1}
	\[
	\left(1-\frac{1}{pk^{1/q}}\right)^{p/q}\le 1-\frac{1}{qk^{1/q}}+\frac{1}{2q^2k^{2/q}}
	\]
	so, it is enough to prove that   there is a $k_0 \in \mathbb{N}$ such that for every $k\geq k_0$
	\begin{equation*}\label{eq52}
	\frac{1}{k^{1+1/q}}\left(1-\frac{1}{qk^{1/q}}+\frac{1}{2q^2k^{2/q}} \right)
	\le q\left(\frac{1}{k^{1/q}}-\frac{1}{(k+1)^{1/q}} \right)
	\end{equation*}
	i.e.
	\begin{equation*}\label{eq53}
	1-\frac{1}{qk^{1/q}}+\frac{1}{2q^2k^{2/q}} 
	\le qk\left[1-\left(\frac{k}{k+1} \right)^{1/q} \right].
	\end{equation*}
	But from \eqref{eq2} 
	\[
	\left(\frac{k}{k+1} \right)^{1/q}=\left(1-\frac{1}{k+1} \right)^{1/q}
	\le 1-\frac{1}{q(k+1)}.
	\]
	and then
	\[
	qk\left[1-\left(\frac{k}{k+1} \right)^{1/q} \right]\geq \frac{k}{k+1}=1-\frac{1}{k+1}.
	\]
	So, it is enough to prove that  there is a $k_0 \in \mathbb{N}$ such that for every 
$k\geq k_0$
	\begin{equation*}\label{eq54}
	1-\frac{1}{qk^{1/q}}+\frac{1}{2q^2k^{2/q}}\le  1-\frac{1}{k+1},
	\end{equation*}
i.e.
      \[
    \frac{1}{k+1}+\frac{1}{2q^2k^{2/q}} \le \frac{1}{qk^{1/q}}
      \]
	which is obviously true for $k$ big enough. Actually, by considering the function
	$f(x)=\frac{1}{xk^{1/x}}+\frac{1}{2x^2k^{2/x}}$ it is not difficult to prove that
	it is true for every $k\geq 8$.
	From the above it follows that there is $i_0$ such that for every $i\geq i_0$ 
	\eqref{eq51} is true and consequently \eqref{eq5} as well.
	
	Now, let \eqref{eq5} is true for $i+1$. We will prove that it is true for $i$ as well.
	We have
	\[
	\sum_{k=i}^{n}\frac{1}{k^{1+1/q}}\left(1-\frac{1}{pk^{1/q}}\right)^{p/q}
\le \frac{1}{i^{1+1/q}}\left(1-\frac{1}{pi^{1/q}}\right)^{p/q}
+q\left(\frac{1}{(i+1)^{1/q}}-\frac{1}{(n+1)^{1/q}} \right)	
	\]
	So, it is enough to prove that
	\[
	\frac{1}{i^{1+1/q}}\left(1-\frac{1}{pi^{1/q}}\right)^{p/q}+
	\frac{q}{(i+1)^{1/q}}\le\frac{q}{i^{1/q}}
	\]
	i.e.
	\[
	\frac{1}{qi}\left(1-\frac{1}{pi^{1/q}}\right)^{p/q}+
	\left(\frac{i}{i+1} \right)^{1/q}\le 1.
	\]
	 and from \eqref{eq2} it follows that it is  enough to prove that
	\begin{equation*}\label{eq55}
	\left(1-\frac{1}{pi^{1/q}}\right)^{p/q}\le \frac{i}{i+1}.
	\end{equation*}
	From Bernoulli's inequality for $\alpha\geq 1$ and $0\le x<1$
	\[
	(1-x)^\alpha\le 1-\frac{\alpha x}{1+(\alpha-1)x}
	\]
	we have
	\[
	\left(1-\frac{1}{pi^{1/q}}\right)^{p/q}\le 1-\frac{p}{pqi^{1/q}+p-q}.
	\]
	So, it is enough to prove
	\[
	\frac{1}{i+1}\le \frac{p}{pqi^{1/q}+p-q}
	\]
	i.e.
	\[
	i^{1/q}\le \frac{i}{q}+\frac{1}{p}
	\]
	which is easy to verify.
			
	The proof of the lemma is complete.
		\end{proof}

	\begin{lemma} \label{lem06}
For $p\geq 2$, $\frac{1}{p}+\frac{1}{q}=1$ and every natural numbers $i$ and $n$ such that
$i\le n$ the next inequality is true:
\begin{align}\label{eq6}
\sum_{k=i}^{n}&\frac{\ln^2k-2q\ln k+2q^2}{k^{1+1/q}}\notag\\
&>\frac{q\left(\ln^2i+2q^2 \right)}{i^{1/q}}
+\frac{\ln^2i-2q\ln i+2q^2}{2i^{1+1/q}}-\frac{q\left(\ln^2n+2q^2 \right)}{n^{1/q}}.
\end{align}
	\end{lemma}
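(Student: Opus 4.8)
The plan is to recognise the summand as minus the derivative of an explicit function and then to use convexity. Put
\[
G(x)=\frac{q\left(\ln^2 x+2q^2\right)}{x^{1/q}},\qquad
H(x)=\frac{\ln^2 x-2q\ln x+2q^2}{x^{1+1/q}},\qquad x>0,
\]
so that a direct differentiation gives $G'(x)=-H(x)$ and hence $\int_i^n H(x)\,dx=G(i)-G(n)$. The three summands on the right of \eqref{eq6} are exactly $G(i)$, $\tfrac12 H(i)$ and $-G(n)$, so \eqref{eq6} is equivalent to
\[
\sum_{k=i}^{n}H(k)\ \ge\ \int_i^n H(x)\,dx+\tfrac12 H(i)+\tfrac12 H(n),
\]
the strictness being free of charge since $H(n)>0$: the quadratic $t^2-2qt+2q^2$ (with $t=\ln n$) has discriminant $4q^2-8q^2<0$ and positive leading coefficient, hence is positive for every $n\ge 1$, and the same remark shows $H>0$ throughout $(0,\infty)$.

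The displayed inequality for $\sum_{k=i}^n H(k)$ is the trapezoidal estimate and follows as soon as $H$ is convex on $(0,\infty)$. Writing $c=1+\tfrac1q$ and $P(u)=u^2-2qu+2q^2$, a routine computation yields
\[
H''(x)=\Big(P''(\ln x)-(2c+1)P'(\ln x)+c(c+1)P(\ln x)\Big)x^{-c-2}=:Q(\ln x)\,x^{-c-2},
\]
where $Q$ is a quadratic in $u$ with positive leading coefficient $c(c+1)$; substituting $c=(q+1)/q$ one finds that its discriminant equals $\tfrac{4}{q^2}\bigl(1-8q^2-12q^3-4q^4\bigr)$, which is negative because $p\ge 2$ forces $q=p/(p-1)\in(1,2]$. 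Thus $Q(u)>0$ for all $u$, so $H''>0$ and $H$ is (strictly) convex. For each integer $k\ge 1$ the chord then lies above the graph of $H$ on $[k,k+1]$, i.e. $\int_k^{k+1}H(x)\,dx\le\tfrac12\bigl(H(k)+H(k+1)\bigr)$; summing over $k=i,\dots,n-1$ telescopes to $\int_i^n H(x)\,dx\le\sum_{k=i}^n H(k)-\tfrac12 H(i)-\tfrac12 H(n)$, which is the bound above. The case $i=n$ is the trivial $H(n)>\tfrac12 H(n)$. Alternatively one can phrase this as a downward induction on $i$ in the style of Lemma~\ref{lem05}, the inductive step being precisely the single trapezoid inequality $\tfrac12\bigl(H(i)+H(i+1)\bigr)\ge G(i)-G(i+1)$.

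The one genuinely technical point is the convexity of $H$, which I have reduced to the sign of the discriminant of the explicit quadratic $Q$; the hypothesis $p\ge 2$ (equivalently $q\le 2$) enters only there, and once it is checked the rest is bookkeeping with a telescoping sum together with the elementary positivity of $t^2-2qt+2q^2$. No subdivision into cases in $x$ or in $i$ is required, in contrast with the proof of Lemma~\ref{lem05}.
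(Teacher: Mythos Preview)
Your proof is correct and essentially the same as the paper's: both identify the antiderivative $G$ of $-H$, establish $H''>0$, and then use convexity to show that the sum dominates $\int_i^n H+\tfrac12 H(i)+\tfrac12 H(n)$—you state this directly via the trapezoidal inequality, while the paper phrases it through Euler's summation formula and the sign of its first remainder, which is the same statement. Your explicit discriminant computation for $Q$ supplies what the paper leaves as ``it is easy to see that $f''(x)>0$''.
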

	\begin{proof}
	For the function
	\[
	f(x)=\frac{\ln^2x-2q\ln x+2q^2}{x^{1+1/q}}
	\]
	we have
	\[
	f'(x)=\frac{1}{x^{2+1/q}}\left[-\left(1+\frac{1}{q}\right)\ln^2 x+2(q+2)\ln x-2q(q+2) \right]
	\]
	and
	\[
	f''(x)=\frac{1}{x^{3+1/q}}
	\left[\left(2+\frac{3}{q}+\frac{1}{q^2}\right)\ln^2 x-2(6+2q+\frac{3}{q})\ln x+4\left(q^2+3q+2\right) \right].
	\]
	It is easy to see that $f'(x)<0$, $f''(x)>0$ and consequently
$f'(x)$ is increasing and $|f'(x)|$ is decreasing.	
 From Euler's summation formula (see, for instance \cite [p.149]{Apostol})
\[
\sum_{k=i}^nf(k)=\int_i^nf(x)dx+\frac{f(i)+f(n)}{2}+\int_i^n\left(x-[x]-\frac{1}{2} \right)f'(x)dx
\]
where $[x]$ is the floor function.

\[
\int_i^n\left(x-[x]-\frac{1}{2} \right)f'(x)dx=\sum_{k=i}^{n-1}\left(\int_k^{k+1/2}+\int_{k+1/2}^{k+1} \right)>0
\]
since
\[
\int_k^{k+1/2}>0,\,\,\int_{k+1/2}^{k+1}<0\quad\mbox{and}\quad \int_k^{k+1/2}>\left|\int_{k+1/2}^{k+1} \right|.
\]
Then
\begin{align*}
\sum_{k=i}^nf(k)&>\int_i^nf(x)dx+\frac{f(i)+f(n)}{2}
>\int_i^nf(x)dx+\frac{\ln^2i-2q\ln i+2q^2}{2i^{1+1/q}}\\
&=\frac{q\left(\ln^2i+2q^2 \right)}{i^{1/q}}
+\frac{\ln^2i-2q\ln i+2q^2}{2i^{1+1/q}}-\frac{q\left(\ln^2n+2q^2 \right)}{n^{1/q}}.
\end{align*}
The lemma is proved.
	\end{proof}
		
\begin{lemma} \label{lem07}
For $p\geq 2$, $\frac{1}{p}+\frac{1}{q}=1$ and every natural numbers $i$ and $n$ such that
$i\le n$ the next inequality is true:
\begin{align}\label{eq7}
\sum_{k=i}^{n}&\frac{\ln^2k-2q\ln k+2q^2}{k^{1+2/q}}
<\frac{\ln^2i-2q\ln i+2q^2}{i^{1+2/q}}
+\frac{q\left(\ln^2i-q\ln i+3/2q^2\right)}{2i^{2/q}}.
\end{align}
	\end{lemma}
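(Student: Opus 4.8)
The plan is to imitate the strategy of Lemma~\ref{lem06}, but the argument here is in fact shorter, because the right-hand side of \eqref{eq7} carries the \emph{full} term $g(i)$ rather than $g(i)/2$, so a crude monotone comparison already suffices and Euler's summation formula is not needed. Set
\[
g(x)=\frac{\ln^2 x-2q\ln x+2q^2}{x^{1+2/q}},\qquad x\ge 1 .
\]
First I would record two elementary facts about $g$. The numerator is a quadratic in $\ln x$ with negative discriminant ($4q^2-8q^2<0$), hence positive for all $x>0$, so $g(x)>0$. Next, writing $N(x)=\ln^2 x-2q\ln x+2q^2$ and differentiating, $g'(x)=x^{-2-2/q}\bigl[xN'(x)-(1+\tfrac2q)N(x)\bigr]$, and with $t=\ln x$ the bracket becomes $-(1+\tfrac2q)t^2+(6+2q)t-(2q^2+6q)$. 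This quadratic in $t$ has leading coefficient $-(1+2/q)<0$ and discriminant $(6+2q)^2-4(1+\tfrac2q)(2q^2+6q)=-4q^2-16q-12<0$, so it is everywhere negative; therefore $g'(x)<0$ and $g$ is strictly decreasing on $[1,\infty)$.

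The second ingredient is an explicit antiderivative. I claim that
\[
G(x)=-\frac{q}{2}\Bigl(\ln^2 x-q\ln x+\tfrac32 q^2\Bigr)\,x^{-2/q}
\]
satisfies $G'(x)=g(x)$; this is a direct differentiation, the coefficients $-q$ and $\tfrac32 q^2$ being exactly those that make the $\ln^2 x$, $\ln x$ and constant terms match $N(x)$. Since $2/q>0$, the factor $x^{-2/q}$ decays polynomially and dominates the logarithmic growth, so $G(x)\to 0$ as $x\to\infty$, whence
\[
\int_i^{\infty} g(x)\,dx=-G(i)=\frac{q\bigl(\ln^2 i-q\ln i+\tfrac32 q^2\bigr)}{2\,i^{2/q}},
\]
which is precisely the second summand on the right-hand side of \eqref{eq7}, while $g(i)$ is the first.

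With these in hand the lemma follows quickly. For each integer $k\ge i+1\ (\ge 2)$, monotonicity of $g$ on $[k-1,k]\subseteq[1,\infty)$ gives the strict inequality $g(k)<\int_{k-1}^{k}g(x)\,dx$. Summing over $k=i+1,\dots,n$ and using $g>0$ on $(n,\infty)$,
\[
\sum_{k=i+1}^{n} g(k) < \int_i^n g(x)\,dx < \int_i^{\infty} g(x)\,dx .
\]
Adding $g(i)$ to both sides yields $\sum_{k=i}^{n} g(k)<g(i)+\int_i^{\infty}g(x)\,dx$, which is exactly \eqref{eq7}; the degenerate case $i=n$ is also covered, since then the left side equals $g(i)$ and the right side exceeds it by the positive quantity $-G(i)$. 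The only genuinely computational points are the sign of the discriminant in $g'$ and the verification $G'=g$, both routine, so I do not anticipate a real obstacle.
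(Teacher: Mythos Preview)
Your proof is correct and follows essentially the same route as the paper: show that $g(x)=(\ln^2 x-2q\ln x+2q^2)x^{-1-2/q}$ is strictly decreasing, use the monotone comparison $\sum_{k=i}^n g(k)<g(i)+\int_i^n g(x)\,dx$, and bound the integral by $-G(i)=\frac{q(\ln^2 i-q\ln i+3q^2/2)}{2i^{2/q}}$. The only cosmetic difference is that you extend the integral to $\infty$ via $G(x)\to 0$, while the paper simply drops the (negative) term $G(n)$; your treatment is in fact more explicit, supplying the discriminant checks and the antiderivative verification that the paper leaves to the reader.
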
	
		\begin{proof}
	For the function
	\[
	g(x)=\frac{\ln^2x-2q\ln x+2q^2}{x^{1+2/q}}
	\]
	we have
	\[
	g'(x)=\frac{1}{x^{2+2/q}}\left[-\left(1+\frac{2}{q}\right)\ln^2 x+2(q+3)\ln x-2q(q+3) \right]<0
	\]
	and consequently $g(x)$ is decreasing and
	\begin{align*}
\sum_{k=i}^ng(k)&
<\frac{\ln^2i-2q\ln i+2q^2}{i^{1+2/q}}+\int_i^ng(x)dx\\
&<\frac{\ln^2i-2q\ln i+2q^2}{i^{1+2/q}}
+\frac{q\left(\ln^2i-q\ln i+3/2q^2\right)}{2i^{2/q}}.
\end{align*}
The lemma is proved.
	\end{proof}

\begin{lemma} \label{lem09}
For $p\geq 2$, $\frac{1}{p}+\frac{1}{q}=1$ and every natural numbers $i$ and $n$ such that
$i\le n$  the next inequality is true:
\begin{align}\label{eq9}
\sum_{k=i}^n\frac{1}{k^p} &\left[\left(k^{1/q} \right)^{p/q-1}-\left(k^{1/q}-\frac{1}{p} \right)^{p/q-1} \right]
\left[k^{1/q}\left(\ln^2k-2q\ln k+2q^2 \right)-2q^2 \right]\notag\\
&<\frac{\ln^2i-2q\ln i+2q^2}{qi^{1+2/q}}+\frac{\ln^2i-q\ln i+3/2q^2}{2i^{2/q}}
-\frac{2q^2}{3i^{3/q}}+\frac{2q^2}{3n^{3/q}}.
\end{align}
	\end{lemma}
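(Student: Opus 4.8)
The plan is to bound the summand in~\eqref{eq9} pointwise so that, after summing, the left‑hand side is controlled by the sum already estimated in Lemma~\ref{lem07} together with an elementary integral bound for $\sum k^{-1-3/q}$. Two facts will be used throughout and should be recorded first. Put $u=k^{1/q}$; the last bracketed factor equals $\Phi_k:=k^{1/q}\bigl(\ln^2 k-2q\ln k+2q^2\bigr)-2q^2=q^2\bigl(g(u)-2\bigr)$ with $g(u)=u\bigl((\ln u-1)^2+1\bigr)$, and since $g(1)=2$ and $g'(u)=(\ln u)^2\ge0$ we get $\Phi_k\ge0$ for every $k\ge1$. Also the map $t\mapsto\bigl(t^2-qt+\tfrac32q^2\bigr)e^{t/q}$ is increasing on $[0,\infty)$ — its derivative is $e^{t/q}\bigl(t+\tfrac q2+\tfrac{t^2}{q}\bigr)$ — so, writing $B:=\ln^2 i-q\ln i+\tfrac32q^2$, one has $B\,i^{1/q}\ge\tfrac32q^2$ for every natural $i$. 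The case $p=2$ is trivial: then $p/q-1=0$, the first bracketed factor $(k^{1/q})^{p/q-1}-(k^{1/q}-\tfrac1p)^{p/q-1}$ vanishes, so the left side of~\eqref{eq9} is $0$, while multiplying its right side by $i^{3/q}$ leaves at least $\tfrac12 B\,i^{1/q}-\tfrac23q^2\ge\tfrac34q^2-\tfrac23q^2>0$. So from now on $p>2$, and we set $m:=p/q-1=p-2>0$.

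First I would estimate the first bracketed factor. With $u=k^{1/q}\ge1$,
\[
(k^{1/q})^{m}-\bigl(k^{1/q}-\tfrac1p\bigr)^{m}=m\int_{u-1/p}^{u}t^{m-1}\,dt\ \le\ \frac mp\max_{u-1/p\le t\le u}t^{m-1}\ \le\ \frac{mq}{p}\,u^{m-1},
\]
where on $[u-\tfrac1p,u]$ we have $t^{m-1}\le q\,u^{m-1}$: if $m\ge1$ because $t^{m-1}\le u^{m-1}$, and if $0<m<1$ because $t^{m-1}\le(u-\tfrac1p)^{m-1}=u^{m-1}\bigl(1-\tfrac1{pu}\bigr)^{m-1}\le u^{m-1}q^{1-m}$, using $1-\tfrac1{pu}\ge1-\tfrac1p=\tfrac1q$ for $u\ge1$. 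Since $\tfrac{mq}{p}=\tfrac{(p-2)q}{p}=\tfrac{p-2}{p-1}=:D$, $u^{m-1}=k^{(p-3)/q}$, $(p-3)/q-p=-1-\tfrac3q$, and $\Phi_k\ge0$, multiplying through by $\Phi_k/k^{p}$ gives, for every $k\ge1$,
\[
\frac1{k^{p}}\Bigl[(k^{1/q})^{m}-\bigl(k^{1/q}-\tfrac1p\bigr)^{m}\Bigr]\Phi_k\ \le\ D\,\frac{\Phi_k}{k^{1+3/q}}\ =\ D\!\left(\frac{\ln^2 k-2q\ln k+2q^2}{k^{1+2/q}}-\frac{2q^2}{k^{1+3/q}}\right).
\]

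Summing over $i\le k\le n$, I would bound the first sum by Lemma~\ref{lem07} and the second one from below by $\int_i^{n+1}x^{-1-3/q}\,dx=\tfrac q3\bigl(i^{-3/q}-(n+1)^{-3/q}\bigr)>\tfrac q3\bigl(i^{-3/q}-n^{-3/q}\bigr)$; this shows the left side of~\eqref{eq9} is strictly less than $D\bigl(\tfrac{A}{i^{1+2/q}}+\tfrac{qB}{2i^{2/q}}-\tfrac{2q^{3}}{3i^{3/q}}+\tfrac{2q^{3}}{3n^{3/q}}\bigr)$, where $A:=\ln^2 i-2q\ln i+2q^2$. Finally I would check this does not exceed the right side of~\eqref{eq9}: their difference is $A\bigl(\tfrac1q-D\bigr)i^{-1-2/q}+\tfrac{\nu B}{2}\,i^{-2/q}-\tfrac{2q^{2}\nu}{3}\,i^{-3/q}+\tfrac{2q^{2}\nu}{3}\,n^{-3/q}$, with $\nu:=1-Dq=\tfrac1{(p-1)^2}>0$ and $\tfrac1q-D=\tfrac1{p(p-1)}>0$. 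Here the first and last terms are $\ge0$, while the middle pair equals $\tfrac{\nu}{i^{3/q}}\bigl(\tfrac12 B\,i^{1/q}-\tfrac23q^{2}\bigr)\ge\tfrac{\nu}{i^{3/q}}\bigl(\tfrac34q^{2}-\tfrac23q^{2}\bigr)>0$ by the bound $B\,i^{1/q}\ge\tfrac32q^2$. This gives~\eqref{eq9}.

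The one delicate point is that the constant $D=\tfrac{p-2}{p-1}$ coming out of the pointwise estimate is only barely smaller than $\tfrac1q=\tfrac{p-1}{p}$; everything hinges on showing the residual $\nu=1-Dq$ is absorbed by the strictly positive leading part of the right side of~\eqref{eq9}, which in the end reduces to the numerical inequality $\tfrac34\ge\tfrac23$ once $B\,i^{1/q}\ge\tfrac32q^2$ is available. Establishing that bound and $\Phi_k\ge0$ from the two monotonicity statements, and correctly handling $(u-\tfrac1p)^{m-1}$ when $0<m<1$, are the places where care is needed; the remainder is bookkeeping with exponents, for which the identity $(p-3)/q-p=-1-\tfrac3q$ is the essential cancellation.
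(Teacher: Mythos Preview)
Your proof is correct and follows the same overall strategy as the paper: bound the bracket $(k^{1/q})^{p/q-1}-(k^{1/q}-\tfrac1p)^{p/q-1}$ pointwise by a constant times $(k^{1/q})^{p/q-2}$, multiply through (using $\Phi_k\ge 0$, which you verify explicitly and the paper leaves tacit), then split $\Phi_k$ and apply Lemma~\ref{lem07} together with an integral comparison for $\sum k^{-1-3/q}$.

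The one technical difference is in the pointwise bound. The paper uses Bernoulli, $(1-\tfrac{1}{pk^{1/q}})^{p/q-1}>(1-\tfrac{1}{pk^{1/q}})^{p/q}\ge 1-\tfrac{1}{qk^{1/q}}$, which gives the constant $\tfrac1q$; with that constant the right-hand side of~\eqref{eq9} drops out immediately with no further comparison needed. Your mean-value-theorem argument gives the sharper constant $D=\tfrac{p-2}{p-1}<\tfrac1q$, but precisely because $D\ne\tfrac1q$ you then need the extra closing step (the $Bi^{1/q}\ge\tfrac32q^2$ monotonicity and the $\tfrac34>\tfrac23$ check) to recover the stated right-hand side. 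Both routes are valid; the paper's is shorter, yours is self-contained about the sign of $\Phi_k$ and handles $p=2$ explicitly.
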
	
	\begin{proof}
	Since
	\[
	\left(1-\frac{1}{pk^{1/q}}\right)^{p/q-1}
	>\left(1-\frac{1}{pk^{1/q}}\right)^{p/q}
	>1-\frac{1}{qk^{1/q}}
	\]	
	we have
	\begin{align*}
\left(k^{1/q} \right)^{p/q-1}-\left(k^{1/q}-\frac{1}{p} \right)^{p/q-1}
=\left(k^{1/q} \right)^{p/q-1}\left[1-\left(1-\frac{1}{pk^{1/q}} \right)^{p/q-1} \right]
<\frac{1}{q}\left(k^{1/q} \right)^{p/q-2}.
\end{align*}
Then
\begin{align*}
\sum_{k=i}^n\frac{1}{k^p}& \left[\left(k^{1/q} \right)^{p/q-1}-\left(k^{1/q}-\frac{1}{p} \right)^{p/q-1} \right]
\left[k^{1/q}\left(\ln^2k-2q\ln k+2q^2 \right)-2q^2 \right]\notag\\
&<\frac{1}{q}\sum_{k=i}^n\frac{1}{k^{1+3/q}}\left[k^{1/q}\left(\ln^2k-2q\ln k+2q^2 \right)-2q^2 \right]\notag\\
&=\frac{1}{q}\sum_{k=i}^n\frac{\ln^2k-2q\ln k+2q^2}{k^{1+2/q}}
-2q\sum_{k=i}^n\frac{1}{k^{1+3/q}}\\
&<\frac{1}{q}\sum_{k=i}^n\frac{\ln^2k-2q\ln k+2q^2}{k^{1+2/q}}
-2q\int_{i}^n\frac{dx}{x^{1+3/q}}\\
&=\frac{1}{q}\sum_{k=i}^n\frac{\ln^2k-2q\ln k+2q^2}{k^{1+2/q}}
-\frac{2q^2}{3i^{3/q}}+\frac{2q^2}{3n^{3/q}}
\end{align*}
and the lemma follows from \eqref{eq7}.
	\end{proof}
		
	\begin{lemma} \label{lem10}
For $p\geq 2$, $\frac{1}{p}+\frac{1}{q}=1$ and every natural $i$  there is a constant $c=c(p)>0$,
 depending only on $p$ such that for every natural $i\geq2$  the next inequality is true:
\begin{align*}\label{eq10}
2q^2-\frac{3}{4}+\frac{2q}{3i^{2/q}}-\frac{2q}{i^{1+1/q}}-\frac{q^2}{i^{1/q}}
-\frac{\ln^2i-q\ln i+3/2q^2}{2qi^{1/q}}>c(p).
\end{align*}
	\end{lemma}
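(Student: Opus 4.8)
The plan is to show that the left-hand side, viewed as a function $\Phi(t)$ of a real variable $t\ge 2$ with $q=p/(p-1)\in(1,2]$ fixed, is strictly increasing on $[2,\infty)$; this reduces the claim to the single inequality $\Phi(2)>0$, after which one may take $c(p):=\tfrac12\Phi(2)$. To prove monotonicity I would differentiate and split $\Phi'$ into the derivatives of the three blocks $\tfrac{2q}{3t^{2/q}}-\tfrac{q^2}{t^{1/q}}$, $\ -\tfrac{2q}{t^{1+1/q}}$, and $\ -\tfrac{1}{2q}\,t^{-1/q}\bigl(\ln^2 t-q\ln t+\tfrac32 q^2\bigr)$, and show each is positive for $t\ge2$, $q\in(1,2]$. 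The first equals $t^{-1-1/q}\bigl(q-\tfrac43 t^{-1/q}\bigr)$, which is positive because $t^{-1/q}\le 2^{-1/2}<\tfrac34\le\tfrac{3q}{4}$; the second is obviously positive; and for the third, writing $\psi(t)=\ln^2 t-q\ln t+\tfrac32 q^2$ and using $t\psi'(t)=2\ln t-q$, the derivative becomes $\tfrac{1}{2q^2}\,t^{-1-1/q}\bigl(\ln^2 t-3q\ln t+\tfrac52 q^2\bigr)$, which is positive since the quadratic in $\ln t$ has discriminant $9q^2-10q^2<0$.

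It remains to verify $\Phi(2)>0$ for every $q\in(1,2]$. Here I would put $r=2^{1/q}\in[\sqrt2,2)$ and estimate $\Phi(2)$ from below while \emph{retaining} the positive term $\tfrac{2q}{3r^2}$: since $r^2<4$ one has $\tfrac{2q}{3r^2}\ge\tfrac q6$, and since $q\mapsto 2^{-1/q}$ is concave on $[1,2]$ (its second derivative is $\tfrac{\ln2}{q^4}2^{-1/q}(\ln2-2q)<0$) its tangent line at $q=1$ gives $\tfrac1r=2^{-1/q}\le\tfrac12+\tfrac{\ln2}{2}(q-1)$; finally $\ln^2 2-q\ln 2\le0$ for $q>1$ lets one replace the numerator $\ln^2 2-q\ln2+\tfrac32 q^2$ by $\tfrac32 q^2$. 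Combining these reduces $\Phi(2)>0$ to the positivity on $[1,2]$ of an explicit cubic in $q$ (with $\ln2$ as a numerical coefficient), and that cubic one checks is increasing on $[1,2]$ and already positive at $q=1$, which finishes the proof.

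The main obstacle is the regime $q\to1^+$ (equivalently $p\to\infty$): there $\Phi(2)$ is genuinely small — its limiting value is below $\tfrac{1}{10}$ — so in the second step there is essentially no room to spare, and one cannot afford either to discard the positive term $\tfrac{2q}{3\cdot 2^{2/q}}$ or to bound $2^{-1/q}$ by the crude constant $2^{-1/2}$; the concavity/tangent-line estimate for $2^{-1/q}$ is precisely what keeps the final cubic positive at $q=1$. (In particular one sees that $c(p)$ could even be chosen independent of $p$, but the statement as given only asks for $c=c(p)$.)
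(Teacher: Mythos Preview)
Your argument is correct and takes a genuinely different route from the paper's. You reduce everything to the single endpoint $i=2$ by proving that the expression is increasing in $t\ge 2$, splitting $\Phi'$ into three pieces each of which is positive (the quadratic $\ln^2 t-3q\ln t+\tfrac52 q^2$ having negative discriminant is the clean key observation here). You then handle the delicate case $\Phi(2)$, $q\to 1^+$, by the concavity bound $2^{-1/q}\le \tfrac12+\tfrac{\ln 2}{2}(q-1)$ and by keeping the term $\tfrac{2q}{3\cdot 2^{2/q}}\ge \tfrac{q}{6}$, which indeed leaves just enough room: the resulting cubic evaluates to $1/24$ at $q=1$ and is increasing on $[1,2]$. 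The paper instead argues in two stages: for large $i$ the limit $2q^2-\tfrac34$ does the job, while for the remaining $i$ it throws away part of the positive term $\tfrac{2q}{3i^{2/q}}$, factors out $i^{-1/q}$, and proves that $\bigl(2q^2-\tfrac34\bigr)i^{1/q}$ dominates the rest via a case split $i=2$ versus $i\ge3$ (the latter using $i^{1/q}>1+\tfrac{\ln i}{q}+\tfrac{\ln^2 i}{2q^2}$). Your monotonicity approach is structurally cleaner and, as you note, actually yields a $p$-independent constant; the paper's approach avoids the derivative computation but pays for it with more ad hoc case analysis.
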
	
	\begin{proof}
	It is obvious that there is a $i_0$ and a constant $c(p,i_0)$ such that for every $i>i_0$ the above inequality is true.	For  $i\le i_0$	we have
	\begin{align*}
&2q^2-\frac{3}{4}+\frac{2q}{3i^{2/q}}-\frac{2q}{i^{1+1/q}}-\frac{q^2}{i^{1/q}}
-\frac{\ln^2i-q\ln i+3/2q^2}{2qi^{1/q}}\\
&>2q^2-\frac{3}{4}-\frac{4q}{3i^{1+1/q}}-\frac{q^2}{i^{1/q}}
-\frac{\left(\ln i-q/2\right)^2+5/4q^2}{2qi^{1/q}}\\
&=\frac{1}{i^{1/q}}\left[ \left(2q^2-\frac{3}{4}\right)i^{1/q}-\frac{4q}{3i}-q^2
-\frac{\left(\ln i-q/2\right)^2+5/4q^2}{2q}\right]
\end{align*}
	Now we will prove that for every $i\le i_0$
	\begin{align*}
\left(2q^2-\frac{3}{4}\right)i^{1/q}>q^2+\frac{4q}{3i}
+\frac{\left(\ln i-q/2\right)^2+5/4q^2}{2q}.
\end{align*}
	We consider the cases $i=2$ and $i\geq3$ separately.
	
\textbf{Case 1.} $i=2$\\	
	We need to prove that 
		\begin{align*}
\left(2q^2-\frac{3}{4}\right)2^{1/q}>q^2+\frac{2}{3}q
+\frac{\left(\ln 2-q/2\right)^2+5/4q^2}{2q}.
\end{align*}
	Since $\left(\ln 2-q/2\right)^2<0.1$ it is enough to prove
	\begin{align*}
\left(2q^2-\frac{3}{4}\right)2^{1/q}>q^2+\frac{2}{3}q+\frac{1}{20q}+\frac{5}{8}q
=q^2+\frac{31}{24}q+\frac{1}{20q}.
\end{align*}
	Considering for $1\le x\le2$ the function
	\[
	f(x)=\left(2x^2-\frac{3}{4}\right)2^{1/x}-x^2-\frac{31}{24}x-\frac{1}{20x}
	\]
	we have
	\begin{align*}
	f'(x)&=4x2^{1/x}-\left(2-\frac{3}{4x^2} \right)2^{1/x}\ln 2-2x-\frac{31}{24}+\frac{1}{20x^2}\\
	&>4x2^{1/x}-\frac{7}{10}\left(2-\frac{3}{4x^2} \right)2^{1/x}-2x-\frac{31}{24}\\
	&=\left(4x-\frac{7}{5}+\frac{21}{40x^2} \right)2^{1/x}-2x-\frac{31}{24}\\
	&>4\sqrt2x-\frac{7\sqrt2}{5}-2x-\frac{31}{24}> 4\sqrt2-\frac{7\sqrt2}{5}-2-\frac{31}{24}>0
	\end{align*}
	and consequently the function $f(x)$ is increasing and $f(x)>f(1)>0$.
	
	\textbf{Case 2.} $i\geq3$\\
		We have
	\[
	i^{1/q}=e^{\ln i/q}>1+\frac{\ln i}{q}+\frac{\ln^2 i}{2q^2}\quad\mbox{and}\quad
	\left(\ln i-q/2\right)^2<\ln^2i
	\]
	so it is enough to prove that for $3\le i$ the next inequality is true
	\[
	\left(2q^2-\frac{3}{4} \right)\left(1+\frac{\ln i}{q}+\frac{\ln^2 i}{2q^2} \right)
	>q^2+\frac{4q}{3i}+\frac{\ln^2i}{2q}+\frac{5}{8}q
	\]
	i.e.
	\[
	q^2-\frac{3}{4}+ \left(2q-\frac{3}{4q} \right)\ln i
	>\frac{4q}{3i}+\frac{5}{8}q
	\]
	because
	\[
	\frac{2q^2-\frac{3}{4}}{2q^2}>\frac{1}{2q}.
	\]
	But $2q-\frac{3}{4q}>5/4$ and 
         \[
         q^2-\frac{3}{4}+ \left(2q-\frac{3}{4q} \right)\ln i>q^2-\frac{3}{4}+\frac{5}{4}\ln 3
      >q^2+\frac{1}{2}.
         \]
Also, $\frac{4q}{3i}+\frac{5}{8}q<\frac{4q}{9}+\frac{5}{8}q=\frac{77}{72}q$ and since
	$q^2+\frac{1}{2}>\frac{77}{72}q$ the lemma is proved.	
	\end{proof}
		
	\begin{lemma} \label{lem11}
For $p\geq 2$, $\frac{1}{p}+\frac{1}{q}=1$ there is a constant $c=c(p)>0$ such that for every
natural $n$  the next inequality is true:
\begin{align*}\label{eq11}
\frac{\ln^22+2q^2}{2^{1/q}}+\frac{2}{3}\frac{q}{2^{3/q}}-\ln^22-2q\sum_{k=2}^n\frac{1}{k^{1+2/q}}
-\frac{\ln^22-q\ln2+3/2q^2}{q2^{1+2/q}}>c(p).
\end{align*}
	\end{lemma}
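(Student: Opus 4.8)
The plan is to use that the series $\sum_{k=2}^{\infty}k^{-1-2/q}$ converges (since $1+2/q>1$ for $q>1$), thereby removing the dependence on $n$, then bound the resulting tail by an integral, and finally verify an elementary one‑variable inequality in $q\in(1,2]$.

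First I would denote by $L_n$ the left‑hand side of the claimed inequality. Since all terms $k^{-1-2/q}$ are positive, the partial sum $\sum_{k=2}^{n}k^{-1-2/q}$ increases with $n$ and stays strictly below $\sum_{k=2}^{\infty}k^{-1-2/q}=\zeta(1+2/q)-1$, so $L_n$ is strictly decreasing in $n$ and, for every natural $n$,
\[
L_n>\ell(q):=\frac{\ln^2 2+2q^2}{2^{1/q}}+\frac{2q}{3\cdot 2^{3/q}}-\ln^2 2-2q\bigl(\zeta(1+2/q)-1\bigr)-\frac{\ln^2 2-q\ln 2+\tfrac32 q^2}{q\,2^{1+2/q}} .
\]
The number $\ell(q)$ depends only on $q$, hence only on $p$, so it suffices to prove $\ell(q)>0$ and then take $c(p)=\ell(q)$. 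To eliminate the $\zeta$‑value I would compare with an integral,
\[
\zeta(1+2/q)-1=\sum_{k=2}^{\infty}\frac{1}{k^{1+2/q}}<\frac{1}{2^{1+2/q}}+\int_{2}^{\infty}\frac{dx}{x^{1+2/q}}=\frac{1+q}{2\cdot 2^{2/q}} ,
\]
so that it remains to show, for all $q\in(1,2]$,
\[
\Phi(q):=\frac{\ln^2 2+2q^2}{2^{1/q}}+\frac{2q}{3\cdot 2^{3/q}}-\ln^2 2-\frac{q(1+q)}{2^{2/q}}-\frac{\ln^2 2-q\ln 2+\tfrac32 q^2}{q\,2^{1+2/q}}>0 .
\]

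For the last step I would set $u=2^{1/q}$, so that $2^{2/q}=u^{2}$, $2^{3/q}=u^{3}$, $2^{1+2/q}=2u^{2}$, and $u\in[\sqrt2,2)$ for $q\in(1,2]$. Multiplying $\Phi(q)$ by the positive quantity $2q\,u^{2}$ turns $\Phi(q)>0$ into
\[
2qu(\ln^2 2+2q^2)+\frac{4q^2}{3u}-2qu^{2}\ln^2 2-2q^2(1+q)-\ln^2 2+q\ln 2-\tfrac32 q^2>0 .
\]
Now I would bound the single exponential $u=2^{1/q}$ by affine functions of $1/q$ coming from convexity of $2^{x}$: a tangent line (say at $x=3/4$) gives a lower bound $u\ge\underline u(q)$, and the chord on $[\tfrac12,1]$ gives an upper bound $u\le\overline u(q)$ (note $\overline u(1)=2$ and $\overline u(2)=\sqrt2$, so there is no loss at the endpoints). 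Using $\underline u$ in the term $2qu(\ln^2 2+2q^2)$ and $\overline u$ in the terms $\tfrac{4q^2}{3u}$ and $-2qu^{2}\ln^2 2$ (each replacement decreases the expression), the left‑hand side is replaced by a rational function of $q$ on $(1,2]$; clearing the positive denominators reduces the claim to a polynomial inequality in $q$, which one checks on $(1,2]$ by a short monotonicity argument or by splitting $(1,2]$ into two or three subintervals and estimating crudely. A numerical check shows $\Phi$ is increasing on $(1,2]$ with $\inf\Phi=\lim_{q\to1^{+}}\Phi(q)\approx0.18$, so there is ample room for such estimates.

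The main obstacle is precisely this last polynomial verification: the coefficients of $u$, $u^{2}$ and $1/u$ in the reduced inequality have different signs, so the affine bounds for $2^{1/q}$ must be oriented correctly — a bound used on the wrong side ruins the estimate (for instance the tangent at $x=0$ is already too weak near $q=1$) — exactly in the spirit of the proof of Lemma \ref{lem10}. Once suitable bounds are fixed, the remaining computation is routine, thanks to the comfortable numerical margin.
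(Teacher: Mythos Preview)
Your approach is correct in outline and shares its first move with the paper: both replace the partial sum by the full series and bound it via the integral comparison $\sum_{k=2}^{\infty}k^{-1-2/q}<\tfrac{1}{2^{1+2/q}}+\int_{2}^{\infty}x^{-1-2/q}\,dx=\tfrac{1+q}{2\cdot 2^{2/q}}$. From this point on the arguments diverge. The paper avoids any affine approximation of $2^{1/q}$ or polynomial case analysis; instead it completes the square $\ln^{2}2-q\ln 2+\tfrac32 q^{2}=(\ln 2-\tfrac{q}{2})^{2}+\tfrac54 q^{2}$, uses the numerical fact $(\ln 2-\tfrac{q}{2})^{2}<0.1$ on $(1,2]$, discards a few positive terms, and after a short rearrangement reduces everything to the single inequality $(2^{1+1/q}-1)q>3$, which follows at once from monotonicity of $x\mapsto(2^{1+1/x}-1)x$ and its value $3$ at $x=1$. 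This gives a fully explicit proof in a few lines. Your route---substituting $u=2^{1/q}$, sandwiching $u$ between a tangent and a chord of $2^{x}$, and then clearing denominators---is a valid and systematic alternative, but it trades the paper's ad-hoc elegance for a heavier polynomial verification that you leave as ``routine''; the comfortable numerical margin you cite ($\Phi\gtrsim 0.18$) makes this credible, yet the paper's argument shows the computation can be sidestepped entirely.
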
	
	\begin{proof}
	We have
	\[
	\sum_{k=2}^n\frac{1}{k^{1+2/q}}=\frac{1}{2^{1+2/q}}+\sum_{k=3}^n\frac{1}{k^{1+2/q}}
	<\frac{1}{2^{1+2/q}}+\int_2^\infty \frac{dx}{x^{1+2/q}}=\frac{1+q}{2.2^{2/q}}
	\]
and
		\[
	\frac{\ln^22-q\ln2+3/2q^2}{q2^{1+2/q}}
	=\frac{\left(\ln2-q/2\right)^2+5/4q^2}{q2^{1+2/q}}
	<\frac{1}{20q2^{2/q}}+\frac{5q}{2^{3+2/q}}.
	\]
	Since
	\[
	\frac{2}{3}\frac{q}{2^{3/q}}>\frac{1}{20q2^{2/q}}
	\]
	and
	\[
	\left(1-\frac{1}{2^{1/q}} \right)\ln^22<\frac{1}{4}
	\]
	it follows
	\begin{align*}
&\frac{\ln^22+2q^2}{2^{1/q}}+\frac{2}{3}\frac{q}{2^{3/q}}-\ln^22-2q\sum_{k=2}^n\frac{1}{k^{1+2/q}}
-\frac{\ln^22-q\ln2+3/2q^2}{q2^{1+2/q}}\\
&>\frac{q}{2^{2/q}}\left[\left(2^{1+1/q}-1 \right)q-\frac{13}{8} \right]-\frac{1}{4}
>\frac{1}{4}\left[\left(2^{1+1/q}-1 \right)q-\frac{21}{8} \right]>0
\end{align*}
since the function $f(x)=\left(2^{1+1/x}-1 \right)x$ is increasing and consequently\\ 
$\left(2^{1+1/q}-1 \right)q>f(1)=3$.
	\end{proof}

	\begin{lemma} \label{lem12}
For $p\geq 2$, $\frac{1}{p}+\frac{1}{q}=1$ and every natural numbers $i$ and $n$ such that
$2\le i\le n$  the next inequality is true:
\begin{align}\label{eq12}
\sum_{k=i}^{n}\frac{1}{k^p}&\left(k^{1/q}-\frac{1}{p} \right)^{p/q-1}
\left[k^{1/q}\left(\ln^2k-2q\ln k+2q^2\right)-2q^2\right]\notag\\
&>\frac{q\left(\ln^2i+2q^2 \right)}{i^{1/q}}-\frac{q\left(\ln^2n+2q^2 \right)}{n^{1/q}}
-2q^2\sum_{k=i}^{n}\frac{1}{k^{1+2/q}}\notag\\
&-\frac{\ln^2i-q\ln i+3/2q^2}{2i^{2/q}}+\frac{2q^2}{3i^{3/q}}-\frac{2q^2}{3n^{3/q}}.
\end{align}
	\end{lemma}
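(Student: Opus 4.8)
The plan is to obtain Lemma~\ref{lem12} by combining the two summation estimates already proved, Lemma~\ref{lem06} and Lemma~\ref{lem09}, after splitting the summand on the left of \eqref{eq12} into a main term and a remainder.

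First I would record the elementary identity coming from $1/p+1/q=1$ (equivalently $p+q=pq$), namely $p/q-1=p-2$ and
\[
\frac{1}{k^{p}}\bigl(k^{1/q}\bigr)^{p/q-1}=k^{-p+(p-2)/q}=k^{-1-2/q}=\frac{1}{k^{1+2/q}},
\]
so that
\[
\frac{1}{k^{p}}\bigl(k^{1/q}\bigr)^{p/q-1}\Bigl[k^{1/q}\bigl(\ln^{2}k-2q\ln k+2q^{2}\bigr)-2q^{2}\Bigr]
=\frac{\ln^{2}k-2q\ln k+2q^{2}}{k^{1+1/q}}-\frac{2q^{2}}{k^{1+2/q}} .
\]
Then, writing $\bigl(k^{1/q}-\tfrac1p\bigr)^{p/q-1}=\bigl(k^{1/q}\bigr)^{p/q-1}-\bigl[\bigl(k^{1/q}\bigr)^{p/q-1}-\bigl(k^{1/q}-\tfrac1p\bigr)^{p/q-1}\bigr]$, the left side of \eqref{eq12} becomes $S_{1}-S_{2}$ with
\[
S_{1}=\sum_{k=i}^{n}\frac{\ln^{2}k-2q\ln k+2q^{2}}{k^{1+1/q}}-2q^{2}\sum_{k=i}^{n}\frac{1}{k^{1+2/q}},
\]
and $S_{2}$ exactly the sum on the left-hand side of \eqref{eq9}.

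Next I would bound $S_{1}$ from below using Lemma~\ref{lem06} (applied to $\sum_{k=i}^{n}(\ln^{2}k-2q\ln k+2q^{2})k^{-1-1/q}$) and $S_{2}$ from above using Lemma~\ref{lem09}, so that the left side of \eqref{eq12} is $S_1-S_2$, which exceeds the right-hand side of \eqref{eq6} diminished by $2q^{2}\sum_{k=i}^{n}k^{-1-2/q}$ and minus the right-hand side of \eqref{eq9}. Substituting these bounds, every term involving $i^{1/q}$, $n^{1/q}$, $i^{2/q}$, $i^{3/q}$, $n^{3/q}$ and the series $\sum_{k=i}^{n}k^{-1-2/q}$ reproduces the right-hand side of \eqref{eq12} verbatim, and the leftover is exactly
\[
\bigl(\ln^{2}i-2q\ln i+2q^{2}\bigr)\left(\frac{1}{2\,i^{1+1/q}}-\frac{1}{q\,i^{1+2/q}}\right).
\]
Since $\ln^{2}i-2q\ln i+2q^{2}=(\ln i-q)^{2}+q^{2}>0$, it suffices that $q\,i^{1/q}\ge 2$, which for $i\ge2$ and $q\in(1,2]$ is an elementary one-variable estimate: the function $q\mapsto q\,e^{(\ln i)/q}$ is increasing for $q>\ln i$, so for $i=2$ it exceeds its limiting value $2$ as $q\to1^{+}$, while for $i\ge3$ its minimum over $(1,2]$, namely $e\ln i$ (if $\ln i\le2$) or $2\sqrt i$ (if $\ln i>2$), already exceeds $2$. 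The strict inequality in \eqref{eq12} is then inherited from the strict inequality of Lemma~\ref{lem06}.

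I do not expect a genuine obstacle: the analytic content (the Euler summation estimates) is already packaged in Lemmas~\ref{lem06} and \ref{lem09}, so what remains is only careful bookkeeping of the cancellation together with the trivial bound $q\,i^{1/q}\ge 2$.
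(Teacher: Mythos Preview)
Your proposal is correct and follows essentially the same route as the paper: the paper likewise writes the left side as $J-I$ (your $S_1-S_2$), bounds $J$ below via Lemma~\ref{lem06} and $I$ above via Lemma~\ref{lem09}, and disposes of the single leftover term $(\ln^{2}i-2q\ln i+2q^{2})\bigl(\tfrac{1}{2i^{1+1/q}}-\tfrac{1}{qi^{1+2/q}}\bigr)$ by the same observation $q\,i^{1/q}\ge 2$ for $i\ge 2$. Your justification of this last inequality is in fact more detailed than the paper's, which simply asserts it.
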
	
\begin{proof}
\begin{align*}
\sum_{k=i}^{n}\frac{1}{k^p}&\left(k^{1/q}-\frac{1}{p} \right)^{p/q-1}
\left[k^{1/q}\left(\ln^2k-2q\ln k+2q^2\right)-2q^2\right]=J-I
\end{align*}
where
\begin{align*}
J&=\sum_{k=i}^{n}\frac{1}{k^p}\left(k^{1/q} \right)^{p/q-1}
\left[k^{1/q}\left(\ln^2k-2q\ln k+2q^2\right)-2q^2\right]\\
&=\sum_{k=i}^{n}\frac{\ln^2k-2q\ln k+2q^2}{k^{1+1/q}}-2q^2\sum_{k=i}^{n}\frac{1}{k^{1+2/q}}
\end{align*}
and
\begin{align*}
I&=\sum_{k=i}^{n}\frac{1}{k^p}
\left[\left(k^{1/q}\right)^{p/q-1}-\left(k^{1/q}-\frac{1}{p}- \right)^{p/q-1}\right]
\left[k^{1/q}\left(\ln^2k-2q\ln k+2q^2\right)-2q^2\right].\notag\\
\end{align*}
From \eqref{eq6} 
	\begin{align*}
J&>\frac{q\left(\ln^2i+2q^2 \right)}{i^{1/q}}
+\frac{\ln^2i-2q\ln i+2q^2}{2i^{1+1/q}}-\frac{q\left(\ln^2n+2q^2 \right)}{n^{1/q}}
-2q^2\sum_{k=i}^{n}\frac{1}{k^{1+2/q}}
\end{align*}
	and from \eqref{eq9} 
	\begin{align*}
I&<\frac{\ln^2i-2q\ln i+2q^2}{qi^{1+2/q}}+\frac{\ln^2i-q\ln i+3/2q^2}{2i^{2/q}}
-\frac{2q^2}{3i^{3/q}}+\frac{2q^2}{3n^{3/q}}.
\end{align*}
For $i\geq2$ we have  $qi^{1/q}\geq2$ and consequently	
	\begin{align*}
\frac{\ln^2i-2q\ln i+2q^2}{2i^{1+1/q}}\geq\frac{\ln^2i-2q\ln i+2q^2}{qi^{1+2/q}}.
\end{align*}
The lemma is proved.	
\end{proof}

	\begin{lemma} \label{lem13}
For $p\geq 2$,  $\frac{1}{p}+\frac{1}{q}=1$ and every natural $n$  the next inequality is true:
\begin{align}\label{eq13}
\sum_{k=1}^{n}\frac{1}{k^p}&\left(k^{1/q}-\frac{1}{p} \right)^{p/q-1}
\left[k^{1/q}\left(\ln^2k-2q\ln k+2q^2\right)-2q^2\right]\notag\\
&>\frac{q\left(\ln^22+2q^2 \right)}{2^{1/q}}-\frac{q\left(\ln^2n+2q^2 \right)}{n^{1/q}}
-2q^2\sum_{k=2}^{n}\frac{1}{k^{1+2/q}}\notag\\
&-\frac{\ln^22-q\ln 2+3/2q^2}{2^{1+2/q}}+\frac{2}{3}\frac{q^2}{2^{3/q}}-\frac{2q^2}{3n^{3/q}}.
\end{align}
	\end{lemma}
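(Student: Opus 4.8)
The plan is to deduce Lemma~\ref{lem13} from Lemma~\ref{lem12} almost immediately; the key point is that the $k=1$ term of the left-hand sum vanishes identically.

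First I would evaluate the $k=1$ summand. At $k=1$ we have $k^{1/q}=1$ and $\ln k=0$, so
\[
k^{1/q}\bigl(\ln^2 k-2q\ln k+2q^2\bigr)-2q^2=2q^2-2q^2=0 ,
\]
and hence the whole $k=1$ term equals $0$, regardless of the factor $(1-1/p)^{p/q-1}$. Consequently, for every $n\ge 2$ the left-hand side of \eqref{eq13} equals $\sum_{k=2}^{n}\frac{1}{k^p}\bigl(k^{1/q}-\frac1p\bigr)^{p/q-1}\bigl[k^{1/q}(\ln^2 k-2q\ln k+2q^2)-2q^2\bigr]$, which is exactly the left-hand side of \eqref{eq12} with $i=2$.

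Next I would invoke Lemma~\ref{lem12} with $i=2$. After the trivial simplifications $2\cdot 2^{2/q}=2^{1+2/q}$ and $\frac{2q^2}{3\cdot 2^{3/q}}=\frac23\frac{q^2}{2^{3/q}}$, every term on the right-hand side of \eqref{eq12} with $i=2$ matches the corresponding term of the right-hand side of \eqref{eq13}, so the inequality \eqref{eq13} follows for all $n\ge 2$. The one remaining case is $n=1$: then the left-hand side of \eqref{eq13} is just the (vanishing) $k=1$ term, hence $0$, while the right-hand side reduces to $\frac{q(\ln^2 2+2q^2)}{2^{1/q}}-2q^3-\frac{\ln^2 2-q\ln 2+\frac32 q^2}{2^{1+2/q}}+\frac23\frac{q^2}{2^{3/q}}-\frac{2q^2}{3}$. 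Since $q\in(1,2]$, using $2^{1/q}\ge\sqrt2$, $2^{3/q}\ge 2^{3/2}$ and $\ln^2 2-q\ln 2+\frac32 q^2=(\ln 2-q/2)^2+\frac54 q^2>0$, the term $-2q^3$ dominates the positive contributions, so the right-hand side is strictly negative, and \eqref{eq13} holds for $n=1$ as well.

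I do not expect a genuine obstacle here: Lemmas~\ref{lem12}, \ref{lem11} and their predecessors carry out all the analytic work, and Lemma~\ref{lem13} is essentially a repackaging that folds in the (zero) $k=1$ contribution. The only mildly delicate points are verifying that the $i=2$ specialization of \eqref{eq12} reproduces \emph{every} term of \eqref{eq13}, and disposing of the degenerate case $n=1$, to which Lemma~\ref{lem12} (which requires $2\le i\le n$) does not directly apply.
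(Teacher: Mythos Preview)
Your proposal is correct and is essentially the same as the paper's proof: the paper also observes that the $k=1$ summand vanishes (so the sum from $1$ to $n$ equals the sum from $2$ to $n$) and then invokes Lemma~\ref{lem12} with $i=2$. Your treatment is in fact slightly more careful, since you also dispose of the boundary case $n=1$, which the paper's proof tacitly skips (Lemma~\ref{lem12} only applies for $n\ge 2$).
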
	
\begin{proof}
Since
\begin{align*}
&\sum_{k=1}^{n}\frac{1}{k^p}\left(k^{1/q}-\frac{1}{p} \right)^{p/q-1}
\left[k^{1/q}\left(\ln^2k-2q\ln k+2q^2\right)-2q^2\right]\\
&=\sum_{k=2}^{n}\frac{1}{k^p}\left(k^{1/q}-\frac{1}{p} \right)^{p/q-1}
\left[k^{1/q}\left(\ln^2k-2q\ln k+2q^2\right)-2q^2\right]
\end{align*}
the lemma follows from the previous one.
\end{proof}

\begin{lemma} \label{lem15}
Let $p\geq 2$, $\frac{1}{p}+\frac{1}{q}=1$ and $A>1$. Then for
every natural numbers $i$ and $n$ such that
$i\le n$  the next equality is true:
\begin{align}\label{eq15.1}
&\sum_{j=2}^\infty (-1)^j \binom{p/q}{j}\frac{1}{A^j\ln^{2j}(n+1)}
\sum_{k=i}^n \frac{\ln^{2j}k-2qj\ln^{2j-1}k}{k^{1+1/q}}\notag\\
&=\frac{q}{i^{1/q}}\sum_{j=2}^\infty (-1)^j \binom{p/q}{j}\frac{\ln^{2j}i}{A^j\ln^{2j}(n+1)}
+O\left(\frac{1}{A^2\ln^{2}(n+1)i^{1/q}} \right)+O\left(\frac{1}{A^2n^{1/q}} \right).
\end{align}
	\end{lemma}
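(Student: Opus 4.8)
The plan is to use two structural facts: the inner summand is an exact derivative, and the weighted $j$-series is a binomial generating function. Together they collapse the whole left-hand side to a single Euler--Maclaurin estimate for one explicit function.

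First I would note that
\[
\frac{\ln^{2j}x-2qj\ln^{2j-1}x}{x^{1+1/q}}=\bigl(-q\,x^{-1/q}\ln^{2j}x\bigr)'=:G_j'(x),
\]
and, writing $w_j:=(-1)^j\binom{p/q}{j}A^{-j}\ln^{-2j}(n+1)$ and $u_x:=\dfrac{\ln^2x}{A\ln^2(n+1)}$, I would observe that $0\le u_x\le 1/A<1$ for $1\le x\le n+1$, so the binomial theorem gives, with $\psi(u):=(1-u)^{p/q}-1+\tfrac pq u=\sum_{j\ge2}(-1)^j\binom{p/q}{j}u^j$,
\[
\Psi(x):=\sum_{j\ge2}w_jG_j(x)=-\frac{q}{x^{1/q}}\,\psi(u_x).
\]
Since on $[i,n+1]$ the terms $w_jG_j'(x)$ are dominated, uniformly in $x$, by a summable sequence (using $0\le\ln x\le\ln(n+1)$), the series $\sum_{j\ge2}w_jG_j$ may be differentiated term by term; and by absolute summability the two sums in the statement may be interchanged. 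Consequently the left-hand side equals $\sum_{k=i}^n\Psi'(k)$.

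Next I would compare this with an integral via $\bigl|\sum_{k=i}^n\Psi'(k)-\int_i^{n+1}\Psi'\bigr|\le\sum_{k=i}^n\int_k^{k+1}|\Psi'(k)-\Psi'(x)|\,dx\le\int_i^{n+1}|\Psi''|$, together with $\int_i^{n+1}\Psi'=\Psi(n+1)-\Psi(i)$. Bernoulli's inequality and Lemma \ref{lem01} give $0\le\psi(u)\le\tfrac12(p/q)^2u^2$, $0\le\psi'(u)\le c(p)u$ on $[0,1]$, and $|\psi''(u)|\le c(p)$ on $[0,1/A]$ (with $\psi\equiv0$ when $p=2$). Since $u_{n+1}=1/A$ this makes $\Psi(n+1)=-q(n+1)^{-1/q}\psi(1/A)=O\!\bigl(A^{-2}n^{-1/q}\bigr)$, while
\[
-\Psi(i)=\frac{q}{i^{1/q}}\,\psi(u_i)=\frac{q}{i^{1/q}}\sum_{j\ge2}(-1)^j\binom{p/q}{j}\frac{\ln^{2j}i}{A^j\ln^{2j}(n+1)}
\]
is exactly the main term on the right-hand side. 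So the whole lemma reduces to proving $\int_i^{n+1}|\Psi''|=O\!\bigl(A^{-2}\ln^{-2}(n+1)\,i^{-1/q}\bigr)$.

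This last bound carries the real content. Differentiating once more, $\Psi''(x)=x^{-2-1/q}B(x)$, where, with $L=\ln x$ and $M=A\ln^2(n+1)$, the quantity $B(x)$ is a linear combination, with coefficients depending only on $q$, of $\psi(u_x)$, $\tfrac LM\psi'(u_x)$, $\tfrac1M\psi'(u_x)$ and $\tfrac{L^2}{M^2}\psi''(u_x)$. Because $u_x=L^2/M$ and, by the bounds above, $\psi(u)=O(u^2)$, $\psi'(u)=O(u)$, $\psi''(u)=O(1)$, every one of these four pieces is a polynomial in $L$ times $M^{-2}$, so $B(x)=O\!\bigl((L^4+1)M^{-2}\bigr)$ with constant depending only on $p$ (for $2<p<3$ the bound on $\psi''$ also uses $1-u_x\ge1-1/A$, which is harmless as $A$ is fixed). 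The decisive step is now to keep the powers of $\ln x$ rather than crudely replacing $\ln x$ by $\ln(n+1)$: the substitution $L=\ln x$ gives
\[
\int_i^{n+1}x^{-2-1/q}\ln^mx\,dx=\int_{\ln i}^{\ln(n+1)}e^{-(1+1/q)L}L^m\,dL\le c(p)\,i^{-1-1/q}\bigl((\ln i)^m+1\bigr),
\]
and since $M^{-2}=A^{-2}\ln^{-4}(n+1)$ and $\bigl((\ln i)^m+1\bigr)/i$ is bounded on $[1,\infty)$, this yields $\int_i^{n+1}|\Psi''|=O\!\bigl(A^{-2}\ln^{-4}(n+1)\,i^{-1/q}\bigr)=O\!\bigl(A^{-2}\ln^{-2}(n+1)\,i^{-1/q}\bigr)$ for $n\ge2$ (the case $n=1$, $i=1$ being trivial since both sides vanish). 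Combining the three contributions gives the stated identity. The main obstacle is exactly this integral estimate --- one must not throw away the logarithms too early, but observe that all of $\Psi''$ already carries the full factor $M^{-2}=A^{-2}\ln^{-4}(n+1)$ and that the residual powers $\ln^mx$ are absorbed, with a little to spare, by integrating against the rapidly decaying weight $x^{-2-1/q}$.
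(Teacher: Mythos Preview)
Your argument is correct, and it takes a genuinely different route from the paper's. The paper works term-by-term: for each fixed $j$ it applies the Euler--Maclaurin formula to $g_j(x)=x^{-1-1/q}(\ln^{2j}x-2qj\ln^{2j-1}x)$, computes $\int_i^n g_j=q\ln^{2j}i/i^{1/q}-q\ln^{2j}n/n^{1/q}$ exactly, bounds the boundary and remainder terms by $O(j^2\ln^{2j-2}n/i^{1/q})$, and only then sums over $j$ with the weights $(-1)^j\binom{p/q}{j}A^{-j}\ln^{-2j}(n+1)$. You instead sum over $j$ first, using the binomial identity to collapse everything to the single closed form $\Psi(x)=-qx^{-1/q}\psi(u_x)$ with $\psi(u)=(1-u)^{p/q}-1+\tfrac{p}{q}u$, and then apply one elementary Euler--Maclaurin estimate to $\Psi'$. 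Both approaches rest on the same key observation, that the inner summand is the derivative of $-qx^{-1/q}\ln^{2j}x$, so the integral telescopes; what differs is the order of operations. Your route is more conceptual and in fact yields the slightly sharper remainder $O\!\bigl(A^{-2}\ln^{-4}(n+1)i^{-1/q}\bigr)$, whereas the paper's term-by-term bookkeeping is more elementary but produces only $\ln^{-2}$. One small remark: for $2<p<3$ your bound $|\psi''(u)|\le c$ on $[0,1/A]$ carries an implicit dependence on $A$ through $(1-1/A)^{p/q-2}$; the paper's argument has the same feature, since $\sum_{j\ge2}j^2\bigl|\binom{p/q}{j}\bigr|$ diverges in that range. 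This is harmless because in the application $A$ is ultimately chosen as a function of $p$ alone, but it is worth being aware that neither proof delivers an $O$-constant strictly independent of $A$ in that regime.
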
	
\begin{proof}
Let us denote by $g$ the function
\[
g(x)=\frac{\ln^{2j}x-2qj\ln^{2j-1}x}{x^{1+1/q}}.
\]
Again, from Euler's summation formula 
\[
\sum_{k=i}^ng(k)=\int_i^ng(x)dx+\frac{g(i)+f(n)}{2}+\int_i^n\left(x-[x]-\frac{1}{2} \right)g'(x)dx
\]
where $[x]$ is the floor function. Now
\begin{equation*}
|g(i)|=\left|\frac{\ln^{2j}i-2qj\ln^{2j-1}i}{i^{1+1/q}} \right|<\frac{c(p)j\ln^{2j}i}{i^{1+1/q}}
<\frac{c(p)j^2\ln^{2j-2}n}{i^{1/q}}
\end{equation*}
and
\begin{equation*}
|g(n)|=\left|\frac{\ln^{2j}n-2qj\ln^{2j-1}n}{n^{1+1/q}} \right|<\frac{c(p)j\ln^{2j}n}{n^{1+1/q}}
<\frac{c(p)j^2\ln^{2j-2}n}{i^{1/q}}.
\end{equation*}
For $g(x)$ we have
\[
g'(x)=\frac{\ln^{2j-2}x}{x^{2+1/q}}
\left[-\left(1+\frac{1}{q}\right)\ln^2x+2j(q+2)\ln x-2qj(2j-1)  \right]
\]
and consequently
\[
|g'(x)|<\frac{c(p)j^2\ln^{2j}x}{x^{2+1/q}}<\frac{c(p)j^2\ln^{2j-2}x}{x^{1+1/q}}<\frac{c(p)j^2\ln^{2j-2}n}{x^{1+1/q}}.
\]
Then
\begin{equation*}
\left|\int_i^n\left(x-[x]-\frac{1}{2} \right)g'(x)dx\right|
<c(p)j^2\ln^{2j-2}n\int_i^n\frac{1}{x^{1+1/q}}<\frac{c(p)j^2\ln^{2j-2}n}{i^{1/q}}.
\end{equation*}
Also
\begin{equation*}
\int_i^ng(x)dx=\frac{q\ln^{2j}i}{i^{1/q}}-\frac{q\ln^{2j}n}{n^{1/q}}.
\end{equation*}
Consequently
\begin{align*}
\sum_{k=i}^n \frac{\ln^{2j}k-2qj\ln^{2j-1}k}{k^{1+1/q}}
=\frac{q\ln^{2j}i}{i^{1/q}}-\frac{q\ln^{2j}n}{n^{1/q}}+O\left(\frac{j^2\ln^{2j-2}n}{i^{1/q}} \right).
\end{align*}
Then
\begin{align*}
&\sum_{j=2}^\infty (-1)^j \binom{p/q}{j}\frac{1}{A^j\ln^{2j}(n+1)}
\sum_{k=i}^n \frac{\ln^{2j}k-2qj\ln^{2j-1}k}{k^{1+1/q}}\notag\\
&=\frac{q}{i^{1/q}}\sum_{j=2}^\infty (-1)^j \binom{p/q}{j}\frac{\ln^{2j}i}{A^j\ln^{2j}(n+1)}
+O\left(\frac{1}{A^2\ln^{2}(n+1)i^{1/q}} \right)+O\left(\frac{1}{A^2n^{1/q}} \right).
\end{align*}
\end{proof}

\section{Proof of the right inequality in theorem \ref{th1}
$$
d(a,b)\,\le\left(\frac{p}{p-1} \right)^p  \left(1+\frac{c}{\ln^2 \frac{b}{a}}\right)^{-1}\,,\qquad c=c(p)>0.
$$ }
By simple change of variables and notations it is easy to see that it is enough to prove \eqref{MR1} for the interval $(1,b)$.

From Holder's inequality we have for every two  functions $f(x)\geq0$ and $g(x)> 0, \,x\in(1,b)$ and $f^p(x)$
 and $g^q(x)$ are integrable over (1,b), 
\[
\left(\int_1^x f(t)dt \right)^p   \le\left(\int_1^x g^q(t)dt  \right)^{p/q}\left(\int_1^x \frac{f^p(t)}{g^p(t)}dt  \right).
\]
After multiplying both sides by $x^{-p}$, integrating from 1 to $b$ and changing the order of integration in the right side 
we get
\begin{align*}
\int_1^b\left(\frac{1}{x}\int_1^x f(t)dt \right)^pdx   \le 
\int_1^b\left[\frac{1}{g^p(t)}\int_t^b\left(\int_1^x g^q(u)du \right)^{p/q}\frac{dx}{x^p}  \right]f^p(t)dt.
\end{align*}
Let us denote for brevity $M(g,t)=g^{-p}(t)M^*(g,t)$ where
\[
M^*(g,t)=\int_t^b\left(\int_1^x g^q(u)du \right)^{p/q}\frac{dx}{x^p}.
\]
Then for every two functions $f(x)\geq 0$ and $g(x)> 0$, $1<x<b$ such that $f^p(x)$ and $g^q(x)$ are integrable the next upper estimation holds
\begin{align*}
\int_1^b\left(\frac{1}{x}\int_1^x f(t)dt \right)^pdx   \le 
\max_{1< t< b}M(g,t)  \int_1^b f^p(t)dt
\end{align*}
and consequently for every function $g(x)> 0,\,1<x<b$
\[
d(1,b)\le \max_{1< t< b}M(g,t). 
\]
Now we want to minimize 
\[
\max_{1< t< b}M(g,t)
\]
 over all functions  $g(x)> 0$ on the interval $(1,b)$ or to find 
\[
\min_{g(x)> 0}\,\max_{1< t< b}\frac{1}{g^p(t)}\int_t^b\left(\int_1^x g^q(u)du \right)^{p/q}\frac{dx}{x^p}.
\]
\begin{remark}
For $g(x)=x^{-1/(pq)}$ we obtain the original Hardy inequality. Indeed, we have
\[
\int_1^x g^q(u)du=qx^{1/q}-q<qx^{1/q},
\]
\[
\int_t^b\left(\int_1^x g^q(u)du \right)^{p/q}\frac{dx}{x^p}
<\int_t^b\left(qx^{1/q} \right)^{p/q}\frac{dx}{x^p}=q^p\left(t^{-1/q}-b^{-1/q} \right)<q^pt^{-1/q}
\]
for every $1< t< b$. Consequently $M(g,t)<q^p$ for every $1< t< b$, which means that
\[
\max_{1< t< b}M(g,t)<q^p \quad\mbox{i.e.}\quad d(1,b)\le q^p.
\]
\end{remark}

Now, for the function $g(x)$, defined by \eqref{eq200} we have
\begin{align*}
\int_1^xg^q(u)du&=\frac{q}{1+\alpha^2q^2}\left[x^{1/q}(\cos(\alpha\ln x)+\alpha q\sin(\alpha\ln x))-1 \right]\\
&<\frac{qx^{1/q}}{1+\alpha^2q^2}\left[\cos(\alpha\ln x)+\alpha q\sin(\alpha\ln x) \right]
\end{align*}
and for every $1< t< b$
\begin{equation*}\label{eqI1}
M^*(g,t)<q^{p/q}\int_t^b\left(\frac{\cos(\alpha\ln x)+\alpha q\sin(\alpha\ln x)}{1+\alpha^2q^2} \right)^{p/q}
\frac{dx}{x^{1+1/q}}.
\end{equation*}
Then from  \eqref{eqI31} it follows that for every $1< t< b$
\[
M^*(g,t)<-q^p\left(1+\frac{c}{\ln^2b}\right)^{-1}\int_t^b\left(g^p(x) \right)'dx
\le q^p\left(1+\frac{c}{\ln^2b}\right)^{-1}g^p(t)
\]
and consequently 
\[
M(g,t)\le  q^p\left(1+\frac{c}{\ln^2b}\right)^{-1}
=\left(\frac{p}{p-1} \right)^p  \left(1+\frac{c}{\ln^2 \frac{b}{a}}\right)^{-1}.
\]
The last means that 
\[
\max_{1< t< b}M(g,t)\le 
\left(\frac{p}{p-1} \right)^p  \left(1+\frac{c}{\ln^2 \frac{b}{a}}\right)^{-1}
\]
i.e.
\[
 d(1,b)\le \left(\frac{p}{p-1} \right)^p  \left(1+\frac{c}{\ln^2 \frac{b}{a}}\right)^{-1}.
\]

\section{Proof of the left inequality in theorem \ref{th1}
$$
d(a,b)\,\geq\left(\frac{p}{p-1} \right)^p-\frac{c}{\ln^2 \frac{b}{a}}\,,\qquad c=c(p)>0.
$$ }
By changing the order of integration, we write the left side of \eqref{eqI15} for
 $a=1$ and $f(x)>0,\,\,1<x<b$ in the following way
\begin{align*}
\int_1^b\left(\frac{1}{x}\int_1^xf(t)dt\right)^p \,dx=\int_1^b M(t)f^p(t)dt
\end{align*}
where
\begin{align*}
M(t)=\frac{1}{[f(t)]^{p/q}}\int_t^b\left(\int_1^x f(u)du \right)^{p/q}\frac{dx}{x^p}. 
\end{align*}
Obviously
\[
d(1,b)\geq \min_{1< t< b}M(t).
\]
Then for the function $f^*(x)$ defined in \eqref{eq21} we have
\[
\int_1^x f^*(u)du=qx^{1/q}\sin(\alpha\ln x)
\]
and
\begin{align}\label{eqI7}
\int_t^b\left(\int_1^x f^*(u)du \right)^{p/q}\frac{dx}{x^p}
=q^{p/q}\int_t^b (\sin(\alpha\ln x))^{p/q}\frac{dx}{x^{1+1/q}}.
\end{align}
Now let $0<\epsilon<1$.
Then  from \eqref{eqI7} and \eqref{eqI5} it follows that for $b>b_0$ where
 \[
b_0=e^{\pi/\sqrt{\min\{q(p-q)^{-1}(pq+1)^{-2},4(pq)^{-2} \}\epsilon}}
\]
the next inequality holds
\[
\int_t^b\left(\int_1^x f^*(u)du \right)^{p/q}\frac{dx}{x^p}
\geq -\frac{q^p}{1+(pq+\epsilon)\alpha^2 }\int_t^b\left[(f^*(x))^{p/q} \right]'dx
=\frac{q^p[f^*(t)]^{p/q}}{1+(pq+\epsilon)\alpha^2 }.
\]
Consequently for every $b>b_0$
\begin{align*}
M(t)\geq \frac{q^p}{1+(pq+\epsilon)\alpha^2 }\geq q^p\left(1-(pq+\epsilon)\alpha^2 \right)
\geq q^p-\frac{q^p(pq+\epsilon)\pi^2}{\ln^2b}
\end{align*}
i.e.
\begin{equation*}\label{I17}
d(1,b)\geq q^p-\frac{q^p(pq+\epsilon)\pi^2}{\ln^2b}.
\end{equation*}
Since for $b\le b_0$ we have
\[
d(1,b)\geq q^p-\frac{q^p\ln^2b_0}{\ln^2b}
\]
by taking $c=\max\{q^p(pq+\epsilon)\pi^2, q^p\ln^2b_0\}$ we complete the proof.

\section{Proof of the left inequality in theorem \ref{th2}
$$
d_n\geq\left(\frac{p}{p-1} \right)^p-\frac{c}{\ln^2 n}\,,\qquad c=c(p)>0.
$$ }
By changing the order of summation we write the left side of \eqref{eqI16} in the following way
\begin{equation*}
\sum_{k=1}^{n}\left(\frac{1}{k}\sum_{j=1}^{k}a_j\right)^p=
\sum_{i=1}^{n} \left[\frac{1}{a_i^{p/q}}  \sum_{k=i}^{n}\frac{1}{k^p}\left(\sum_{j=1}^{k}a_j\right)^{p/q}\right]a_i^p
=\sum_{i=1}^{n}M_ia_i^p
\end{equation*}
where
\[
M_i=\frac{1}{a_i^{p/q}} M_i^* \quad\mbox{and}\quad M_i^*=\sum_{k=i}^{n}\frac{1}{k^p}\left(\sum_{j=1}^{k}a_j\right)^{p/q}.
\]
Then
\begin{equation*}
\sum_{k=1}^{n}\left(\frac{1}{k}\sum_{j=1}^{k}a_j\right)^p\geq \min_{1\le i\le n}M_i\sum_{k=1}^{n}a_i^p
\end{equation*}
and consequently
\begin{equation*}\label{eq3}
d_n\geq \min_{1\le i\le n}M_i.
\end{equation*}
Now, we will prove that the sequence $a_k^*$ defined in Theorem \ref{th2} is the ``almost extremal'' sequence, i. e. 
the inequality \eqref{eqR1} holds.
We have
\[
\sum_{j=1}^{k}a_j^*=\int_1^{k+1}f^*(x)dx=q(k+1)^{1/q}\sin(\alpha\ln (k+1))
\]
where $f^*$ is the function defined by \eqref{eq20}. It is easy to see that the function 
$x^{p-1-1/q}\left(\sin(\alpha\ln x)\right)^{p/q}=\left(x^{1/q}\sin(\alpha\ln x)\right)^{p/q}$ is increasing and consequently
\begin{align}\label{eqR2}
M_i^*&=q^{p/q}\sum_{k=i}^{n}\frac{(k+1)^{p-1-1/q}}{k^p}\left(\sin(\alpha\ln (k+1))\right)^{p/q}\notag\\
&\geq q^{p/q}\int_i^{n+1}(\sin(\alpha\ln x))^{p/q}\frac{dx}{x^{1+1/q}}.
\end{align}
Since the function $f^*(x)$ is continuous there exists a point $\eta_i\in [i,i+1]$ such that
$a_i=f^*(\eta_i)$. Now let $0<\epsilon<1$. Then from \eqref{eqR2} and Lemma \ref{lem20} it follows that for every integer 
\[
n>n_0=e^{\pi/\sqrt{\min\{q(p-q)^{-1}(pq+1)^{-2},4(pq)^{-2} \}\epsilon}}
\]
\begin{align*}
M_i^*\geq -\frac{q^p}{1+(pq+\epsilon)\alpha^2 }\int_{\eta_i}^b\left[(f^*(x))^{p/q} \right]'dx
=\frac{q^p\left[f^*(\eta_i)\right]^{p/q}}{1+(pq+\epsilon)\alpha^2 }
\end{align*}
and  consequently 
\begin{align*}
M_i&\geq \frac{q^p}{1+(pq+\epsilon)\alpha^2 }\geq q^p\left(1-(pq+\epsilon)\alpha^2 \right)
\geq q^p-\frac{q^p(pq+\epsilon)\pi^2}{\ln^2(n+1)}
\end{align*}
i.e.
\[
d_n\geq  q^p-\frac{q^p(pq+\epsilon)\pi^2}{\ln^2(n+1)}.
\]
Since for $n\le n_0$ we have
\[
d_n\geq q^p-\frac{q^p\ln^2n_0}{\ln^2n}
\]
by taking $c=\max\{q^p(pq+\epsilon)\pi^2, q^p\ln^2n_0\}$ we complete the proof.

\section{Proof of the right inequality in theorem \ref{th2}
$$
d_n<\left(\frac{p}{p-1} \right)^p-\frac{c}{\ln^2 n}\,,\qquad c=c(p)>0.
$$ }
From Holder's inequality we have for every two  sequences $\mu_i>0$ and $\eta_i\geq0$, $i=1,...n$
\[
\sum_{i=1}^{k}\mu_i\eta_i\le\left(\sum_{i=1}^{k}\eta_i^p\right)^{1/p}\left(\sum_{i=1}^{k}\mu_i^q\right)^{1/q}
\]
or
\[
\left(\frac{1}{k}\sum_{i=1}^{k}\mu_i\eta_i\right)^p 
\le \frac{1}{k^p}\left(\sum_{i=1}^{k}\eta_i^p\right)\left(\sum_{i=1}^{k}\mu_i^q\right)^{p/q}.
\]
Denoting $a_i=\mu_i\eta_i$ 
and after changing the order of summation we get
\[
\sum_{k=1}^{n}\left(\frac{1}{k}\sum_{i=1}^{k}a_i\right)^p
\le \sum_{i=1}^{n}M_ia_i^p \le \left( \max_{1\le i\le n}M_i\right) \sum_{i=1}^{n}a_i^p,
\]
where 
\[
M_i=\frac{1}{\mu_i^p}M_i^*,\quad M_i^*=\sum_{k=i}^{n}\frac{1}{k^p}\left(\sum_{j=1}^{k}\mu_j^q\right)^{p/q}.
\]
Obviously 
\[
d_n\le \max_{1\le i\le n}M_i,\quad\mbox{so we want to minimize }\quad
\max_{1\le i\le n}M_i
\]
 over all sequences $\mu=\{\mu_i>0\},\,i=1,2, ... ,n$, i.e. to find 
\[
\min_{\mu>0}\,\max_{1\le i\le n}M_i
\]
or, at least, to make it as small as possible.
\begin{remark}
By choosing, for instance, 
\[
\mu_k= k^{-1/(pq)},\quad k=1,2,\,...\,n                  
\]
 we obtain the Hardy's inequality with $d_n=\left(\frac{p}{p-1} \right)^p$.

Indeed,
\begin{equation}\label{eq14}
\sum_{j=1}^{k}\mu_j^q=1+\sum_{j=2}^{k}\frac{1}{j^{1/p}}<1+\int_1^k\frac{dx}{x^{1/p}}=qk^{1/q}-q+1=
qk^{1/q}\left(1-\frac{1}{pk^{1/q}} \right)
\end{equation}
and from \eqref{eq5} of Lemma \ref{lem05}
\[
M_i^*\le q^{p/q}\sum_{k=i}^{n}\frac{1}{k^{1+1/q}}\left(1-\frac{1}{pk^{1/q}}\right)^{p/q}
\le\left(\frac{p}{p-1} \right)^pi^{-1/q}.	
\]
Consequently
\[
M_i\le\left(\frac{p}{p-1} \right)^p \quad\mbox{and}\quad d_n\le\left(\frac{p}{p-1} \right)^p.
\]
\end{remark}
But in order to prove the right inequality of \eqref{MR2} we need to make a more complicated choise of the sequence $\mu_k$.
Let
\[
\mu_k=
\left(\frac{A}{k^{1/p}}-\frac{1}{\ln^2(n+1)}\int_{k}^{k+1}\frac{\ln^2 x}{x^{1/p}}dx\right)^{1/q}
\]
where $A=A(p)>2$ is a constant which depends only on $p$ and will be chosen later.
It is obvious that the sequence $\mu_k,\, k=1,...,n$ is well defined.
Then for every $i$ such that $1\le i\le n$
\begin{equation}\label{eq31}
\mu_i^p<\frac{A^{p/q}}{i^{1/q}}=\frac{c(p)}{i^{1/q}}
\end{equation}
and
\begin{align}\label{eq15}
\mu_i^p&>\left(\frac{A}{i^{1/p}}-\frac{\ln^2(i+1)}{i^{1/p}\ln^2(n+1)}\right)^{p/q}
=\frac{A^{p/q}}{i^{1/q}}\left(1-\frac{\ln^2(i+1)}{A\ln^2(n+1)}\right)^{p/q}\notag\\
&=\frac{A^{p/q}}{i^{1/q}}\sum_{j=0}^\infty(-1)^j \binom{p/q}{j}
\left(\frac{\ln^2(i+1)}{A\ln^2(n+1)} \right)^j.
\end{align}
Now
\begin{align*}
\sum_{j=1}^{k}\mu_j^q
&=A\sum_{j=1}^{k}\frac{1}{j^{1/p}}-\frac{1}{\ln^2(n+1)}\int_{1}^{k+1}\frac{\ln^2 x}{x^{1/p}}dx\\
&<A\sum_{j=1}^{k}\frac{1}{j^{1/p}}-\frac{1}{\ln^2(n+1)}\int_{1}^{k}\frac{\ln^2 x}{x^{1/p}}dx.
\end{align*}
Since
\[
\int_{1}^{k}\frac{\ln^2 x}{x^{1/p}}dx=
qk^{1/q}\left(\ln^2k-2q\ln k+2q^2 \right)-2q^3
\]
and from \eqref{eq14} we have
\begin{align*}
\sum_{j=1}^{k}\mu_j^q
&<Aq\left(k^{1/q}-\frac{1}{p} \right)-\frac{q}{\ln^2(n+1)}\left[k^{1/q}\left(\ln^2k-2q\ln k+2q^2 \right)-2q^2 \right]\\
&=Aq\left(k^{1/q}-\frac{1}{p} \right)(1-S(k))
\end{align*}
where for brevity we denoted by
\[
S(k)=\frac{k^{1/q}\left(\ln^2k-2q\ln k+2q^2 \right)-2q^2}{A\left(k^{1/q}-\frac{1}{p} \right)\ln^2(n+1)}.
\]
We have
\begin{align}\label{eq17}
(Aq)^{-p/q}M_i^*&<(Aq)^{-p/q}\sum_{k=i}^{n} \frac{1}{k^p}\left[Aq\left(k^{1/q}-\frac{1}{p} \right)\right]^{p/q}
\left(1-S(k)\right)^{p/q}\notag\\
&=\sum_{k=i}^n \frac{1}{k^p}\left(k^{1/q}-\frac{1}{p} \right)^{p/q}
\sum_{j=0}^\infty (-1)^j \binom{p/q}{j}S^j(k) \notag\\
&=\sum_{k=i}^n \frac{1}{k^p}\left(k^{1/q}-\frac{1}{p} \right)^{p/q}
-\frac{p}{q}\sum_{k=i}^n \frac{1}{k^p}\left(k^{1/q}-\frac{1}{p} \right)^{p/q}S(k) \notag\\
&+\sum_{k=i}^n \frac{1}{k^p}\left(k^{1/q}-\frac{1}{p} \right)^{p/q}
\sum_{j=2}^\infty (-1)^j \binom{p/q}{j}S^j(k)=L_1-L_2+L_3.
\end{align}
From \ref{eq5} of Lemma \ref{lem05}
\begin{equation}\label{L1}
L_1=\sum_{k=i}^n \frac{1}{k^p}\left(k^{1/q}-\frac{1}{p} \right)^{p/q}
\le q\left(\frac{1}{i^{1/q}}- \frac{1}{(n+1)^{1/q}}\right).
\end{equation}

From \ref{eq12} of Lemma \ref{lem12} we have for $2\le i\le n$
\begin{align}\label{L21}
L_2(i\geq2)
&=\frac{p}{q}\sum_{k=i}^n \frac{1}{k^p}\left(k^{1/q}-\frac{1}{p} \right)^{p/q}S(k)\notag\\
&>\frac{p}{qA\ln^2(n+1)}
\Big[\frac{q\left(\ln^2i+2q^2 \right)}{i^{1/q}}-\frac{q\left(\ln^2n+2q^2 \right)}{n^{1/q}}
-2q^2\sum_{k=i}^{n}\frac{1}{k^{1+2/q}}\notag\\
&-\frac{\ln^2i-q\ln i+3q^2/2}{2i^{2/q}}+\frac{2q^2}{3i^{3/q}}-\frac{2q^2}{3n^{3/q}}\Big]
\end{align}
and since $S(1)=0$
\begin{align}\label{L22}
L_2(i=1)
&=\frac{p}{q}\sum_{k=1}^n \frac{1}{k^p}\left(k^{1/q}-\frac{1}{p} \right)^{p/q}S(k)\notag\\
&>\frac{p}{qA\ln^2(n+1)}
\Big[\frac{q\left(\ln^22+2q^2 \right)}{2^{1/q}}-\frac{q\left(\ln^2n+2q^2 \right)}{n^{1/q}}
-2q^2\sum_{k=2}^{n}\frac{1}{k^{1+2/q}}\notag\\
&-\frac{\ln^22-q\ln 2+3q^2/2}{2^{1+2/q}}+\frac{2}{3}\frac{q^2}{2^{3/q}}-\frac{2q^2}{3n^{3/q}}\Big]
\end{align}
for $i=1$.

For $k\geq2$ from \eqref{eq1} of Lemma \ref{lem01} and since
\[
\left|\frac{2q}{\ln k}-\frac{2q^2-2q^2k^{-1/q} }{\ln^2k}\right|<\frac{c(p)}{\ln k}
\]
it follows that for every natural $j\geq 2$
\begin{align*}
S^j(k)&=\left[\frac{k^{1/q}\ln^2k}{A\left(k^{1/q}-\frac{1}{p} \right)\ln^2(n+1)} \right]^j
\left[1-\frac{2qj}{\ln k}+\frac{\left(2q^2-2q^2k^{-1/q} \right)j}{\ln^2k}+O\left(\frac{j^2}{\ln^2k} \right) \right]\\
&=\left[\frac{k^{1/q}\ln^2k}{A\left(k^{1/q}-\frac{1}{p} \right)\ln^2(n+1)} \right]^j
\left[1-\frac{2qj}{\ln k}+O\left(\frac{j^2}{\ln^2k} \right) \right]\\
&=\left[\frac{k^{1/q}}{A\left(k^{1/q}-\frac{1}{p} \right)\ln^2(n+1)} \right]^j
\left[\ln^{2j}k-2qj\ln^{2j-1}k \right]\\
&+\left[\frac{k^{1/q}\ln^2k}{A\left(k^{1/q}-\frac{1}{p} \right)\ln^2(n+1)} \right]^j
O\left(\frac{j^2}{\ln^2k} \right)\\
&=\left[\frac{k^{1/q}}{A\left(k^{1/q}-\frac{1}{p} \right)\ln^2(n+1)} \right]^j
\left[\ln^{2j}k-2qj\ln^{2j-1}k \right]+\left(\frac{2}{A}\right)^jO\left(\frac{j^2}{\ln^2(n+1)} \right).
\end{align*}
Then for $i\geq 2$
\begin{align*}
L_3&=\sum_{k=i}^n \frac{1}{k^p}\left(k^{1/q}-\frac{1}{p} \right)^{p/q}
\sum_{j=2}^\infty (-1)^j \binom{p/q}{j}S^j(k)\\
&=\sum_{k=i}^n \frac{1}{k^p}\left(k^{1/q}-\frac{1}{p} \right)^{p/q}
\sum_{j=2}^\infty  (-1)^j\binom{p/q}{j}
\frac{k^{j/q}\left(\ln^{2j}k-2qj\ln^{2j-1}k\right)}{A^j\left(k^{1/q}-\frac{1}{p} \right)^j\ln^{2j}(n+1)} \\
&+\sum_{k=i}^n \frac{1}{k^p}\left(k^{1/q}-\frac{1}{p} \right)^{p/q}
\sum_{j=2}^\infty (-1)^j \binom{p/q}{j}\left(\frac{2}{A}\right)^jO\left(\frac{j^2}{\ln^2(n+1)} \right)
=L_{31}+L_{32}.
\end{align*}
Now
\begin{align}\label{eq27}
\left|L_{32}\right|&=\left|\sum_{k=i}^n \frac{1}{k^p}\left(k^{1/q}-\frac{1}{p} \right)^{p/q}
\sum_{j=2}^\infty (-1)^j \binom{p/q}{j}\left(\frac{2}{A}\right)^jO\left(\frac{j^2}{\ln^2(n+1)} \right)\right|\notag\\
&<\frac{c(p)}{\ln^2(n+1)}\sum_{k=i}^n \frac{1}{k^p}\left(k^{1/q} \right)^{p/q}
\sum_{j=2}^\infty \left|\binom{p/q}{j}\right|\left(\frac{2}{A}\right)^j j^2\notag\\
&=\frac{c(p)}{A^2\ln^2(n+1)}\sum_{k=i}^n\frac{1}{k^{1+1/q}}
<\frac{c(p)}{i^{1/q}A^2\ln^2(n+1)}=\frac{1}{i^{1/q}}O\left(\frac{1}{A^2\ln^2(n+1)} \right).
\end{align}
For $\alpha\geq 1$ and $0\le x\le 1/2$ the next inequality holds
\[
\frac{1}{(1-x)^\alpha}\le 1+\alpha 2^{\alpha+1}x
\]
which is easy to verify. Then for $p\geq 2$ and $j\geq 1$ we have
\[
\frac{k^{j/q}}{\left(k^{1/q}-\frac{1}{p} \right)^j}=\frac{1}{\left(1-\frac{1}{pk^{1/q}} \right)^j}
=1+O\left(\frac{2^j j}{k^{1/q}} \right)
=1+O\left(\frac{2^j j}{\ln^2k} \right).
\]
Consequently
\begin{align*}
&\sum_{j=2}^\infty (-1)^j \binom{p/q}{j}
\frac{k^{j/q}\left(\ln^{2j}k-2qj\ln^{2j-1}k\right)}{A^j\left(k^{1/q}-\frac{1}{p} \right)^j\ln^{2j}(n+1)}\notag \\
&=\sum_{j=2}^\infty (-1)^j \binom{p/q}{j}
\frac{\ln^{2j}k-2qj\ln^{2j-1}k}{A^j\ln^{2j}(n+1)}+
O\left(\frac{1}{A^2\ln^2(n+1)}\right).
\end{align*}
Then by \eqref{eq15.1} of Lemma \ref{lem15}
\begin{align}\label{eq29}
L_{31}&=\sum_{k=i}^n \frac{1}{k^p}\left(k^{1/q}-\frac{1}{p} \right)^{p/q}
\sum_{j=2}^\infty (-1)^j \binom{p/q}{j}
\frac{\ln^{2j}k-2qj\ln^{2j-1}k}{A^j\ln^{2j}(n+1)}\notag\\
&+\sum_{k=i}^n \frac{1}{k^p}\left(k^{1/q}-\frac{1}{p} \right)^{p/q}
O\left(\frac{1}{A^2\ln^2(n+1)}\right)\notag\\
&=\sum_{k=i}^n \frac{1}{k^p}\left(k^{1/q}-\frac{1}{p} \right)^{p/q}
\sum_{j=2}^\infty (-1)^j \binom{p/q}{j}
\frac{\ln^{2j}k-2qj\ln^{2j-1}k}{A^j\ln^{2j}(n+1)}\notag\\
&+\frac{1}{i^{1/q}}O\left(\frac{1}{A^2\ln^2(n+1)}\right)\notag\\
&=\sum_{k=i}^n \frac{1}{k^{1+1/q}}
\sum_{j=2}^\infty (-1)^j \binom{p/q}{j}
\frac{\ln^{2j}k-2qj\ln^{2j-1}k}{A^j\ln^{2j}(n+1)}\notag\\
&+\sum_{k=i}^n \frac{1}{k^{1+1/q}}
\sum_{j=2}^\infty (-1)^j \binom{p/q}{j}
\frac{\ln^{2j}k-2qj\ln^{2j-1}k}{A^j\ln^{2j}(n+1)}O\left(\frac{1}{k^{1/q}} \right)\notag\\
&+\frac{1}{i^{1/q}}O\left(\frac{1}{A^2\ln^2(n+1)}\right)\notag\\
&=\sum_{k=i}^n \frac{1}{k^{1+1/q}}
\sum_{j=2}^\infty (-1)^j \binom{p/q}{j}
\frac{\ln^{2j}k-2qj\ln^{2j-1}k}{A^j\ln^{2j}(n+1)}\notag\\
&+\frac{1}{i^{1/q}}O\left(\frac{1}{A^2\ln^2(n+1)}\right)\notag\\
&=\sum_{j=2}^\infty (-1)^j \binom{p/q}{j}\frac{1}{A^j\ln^{2j}(n+1)}
\sum_{k=i}^n \frac{\ln^{2j}k-2qj\ln^{2j-1}k}{k^{1+1/q}}\notag\\
&+\frac{1}{i^{1/q}}O\left(\frac{1}{A^2\ln^2(n+1)}\right)\notag\\
&=\frac{q}{i^{1/q}}\sum_{j=2}^\infty (-1)^j \binom{p/q}{j}\frac{\ln^{2j}i}{A^j\ln^{2j}(n+1)}
+\frac{1}{i^{1/q}}O\left(\frac{1}{A^2\ln^2(n+1)}\right)+O\left(\frac{1}{A^2 n^{1/q}}\right)
\end{align}
because
\begin{align*}
\frac{1}{k^p}\left(k^{1/q}-\frac{1}{p} \right)^{p/q}
&=\frac{1}{k^{1+1/q}}\left(1-\frac{1}{pk^{1/q}} \right)^{p/q}
=\frac{1}{k^{1+1/q}}\left(1+ O\left(\frac{1}{k^{1/q}} \right)\right),
\end{align*}

\begin{align*}
\left| \binom{p/q}{j}\frac{(-1)^j\left(\ln^{2j}k-2qj\ln^{2j-1}k\right)}{A^j\ln^{2j}(n+1)}O\left(\frac{1}{k^{1/q}} \right)\right|
<\frac{cj^{p/q+1}\ln^2k}{A^j\ln^{2}(n+1)k^{1/q}}
<\frac{cj^{p/q+1}}{A^j\ln^{2}(n+1)}
\end{align*}
and
\begin{align*}
\sum_{k=i}^n \frac{1}{k^{1+1/q}}\sum_{j=2}^\infty\frac{cj^{p/q+1}}{A^j\ln^{2}(n+1)}
<\frac{c}{A^2\ln^{2}(n+1)}\sum_{k=i}^n \frac{1}{k^{1+1/q}}
<\frac{c}{A^2\ln^2(n+1)i^{1/q}}.
\end{align*}
Consequently for $i\geq2$
\begin{align}\label{eq30}
L_{3}(i\geq2)=\frac{q}{i^{1/q}}\sum_{j=2}^\infty (-1)^j \binom{p/q}{j}\frac{\ln^{2j}i}{A^j\ln^{2j}(n+1)}
+\frac{1}{i^{1/q}}O\left(\frac{1}{A^2\ln^2(n+1)}\right)+O\left(\frac{1}{A^2 n^{1/q}}\right).
\end{align}
Since $S(1)=0$ we have for $i=1$
\begin{align}\label{eq31}
L_{3}(i=1)&=\sum_{k=1}^n \frac{1}{k^p}\left(k^{1/q}-\frac{1}{p} \right)^{p/q}
\sum_{j=2}^\infty (-1)^j \binom{p/q}{j}S^j(k)\notag\\
&=\sum_{k=2}^n \frac{1}{k^p}\left(k^{1/q}-\frac{1}{p} \right)^{p/q}
\sum_{j=2}^\infty (-1)^j \binom{p/q}{j}S^j(k)\notag\\
&=\frac{q}{2^{1/q}}\sum_{j=2}^\infty (-1)^j \binom{p/q}{j}\frac{\ln^{2j}2}{A^j\ln^{2j}(n+1)}
+O\left(\frac{1}{A^2\ln^2(n+1)}\right)+O\left(\frac{1}{A^2 n^{1/q}}\right)\notag\\
&=\frac{q}{2^{1/q}}\sum_{j=2}^\infty (-1)^j \binom{p/q}{j}\frac{\ln^{2j}2}{A^j\ln^{2j}(n+1)}
+O\left(\frac{1}{A^2\ln^2(n+1)}\right)
\end{align}

We consider the cases $i=1$ and $i\geq2$ separately.

\textbf{Case $i=1$.}\\
From \eqref{L1}, \eqref{L21} and  \eqref{eq31} we obtain
\begin{align*}
&(Aq)^{-p/q}M_1^*\\
&<q-\frac{p}{A\ln^2(n+1)}\left[\frac{\ln^22+2q^2}{2^{1/q}}+\frac{2}{3}\frac{q}{2^{3/q}}
-2q\sum_{k=2}^n\frac{1}{k^{1+2/q}}
-\frac{\ln^22-q\ln2+3q^2/2}{q2^{1+2/q}} \right]\\
&\,\,\,\,\,\,\,\,\,+\frac{q}{2^{1/q}}\sum_{j=2}^\infty (-1)^j \binom{p/q}{j}\frac{\ln^{2j}2}{A^j\ln^{2j}(n+1)}
+O\left(\frac{1}{A^2\ln^2(n+1)}\right)-T\\
&=q\sum_{j=0}^\infty (-1)^j \binom{p/q}{j}\frac{\ln^{2j}2}{A^j\ln^{2j}(n+1)}
-q\left(1-\frac{1}{2^{1/q}}\right)\sum_{j=2}^\infty (-1)^j \binom{p/q}{j}\frac{\ln^{2j}2}{A^j\ln^{2j}(n+1)}\\
&-\frac{p}{A\ln^2(n+1)}\left[\frac{\ln^22+2q^2}{2^{1/q}}+\frac{2}{3}\frac{q}{2^{3/q}}
-\ln^22-2q\sum_{k=2}^n\frac{1}{k^{1+2/q}}
-\frac{\ln^22-q\ln2+3/2q^2}{q2^{1+2/q}} \right]\\
&+O\left(\frac{1}{A^2\ln^2(n+1)}\right)-T\\
&=q\sum_{j=0}^\infty (-1)^j \binom{p/q}{j}\frac{\ln^{2j}2}{A^j\ln^{2j}(n+1)}
+O\left(\frac{1}{A^2\ln^2(n+1)}\right)-T\\
&-\frac{p}{A\ln^2(n+1)}\left[\frac{\ln^22+2q^2}{2^{1/q}}+\frac{2}{3}\frac{q}{2^{3/q}}
-\ln^22-2q\sum_{k=2}^n\frac{1}{k^{1+2/q}}
-\frac{\ln^22-q\ln2+3/2q^2}{q2^{1+2/q}} \right]\\
\end{align*}
where
\[
T=\frac{q}{(n+1)^{1/q}}
-\frac{p}{A\ln^2(n+1)} \left[\frac{\ln^2n+2q^2 }{n^{1/q}}
+\frac{2q}{3n^{3/q}}\right].
\]
It is obvious that by taking $A=A(p)$ big enough we can make $T$ positive.
By Lemma \ref{lem11} there is a constant $c(p)$ such that
\begin{align*}
\frac{\ln^22+2q^2}{2^{1/q}}+\frac{2}{3}\frac{q}{2^{3/q}}
-\ln^22-\sum_{k=2}^n\frac{2q}{k^{1+2/q}}
-\frac{\ln^22-q\ln2+3/2q^2}{q2^{1+2/q}}>c(p).
\end{align*} 
Then
\begin{align*}
&(Aq)^{-p/q}M_1^*<
q\sum_{j=0}^\infty (-1)^j \binom{p/q}{j}\frac{\ln^{2j}2}{A^j\ln^{2j}(n+1)}
+O\left(\frac{1}{A^2\ln^2(n+1)}\right)-\frac{pc(p)}{A\ln^2(n+1)}.
\end{align*}
Again, by taking $A=A(p)$ big enough we have
\begin{align*}
&(Aq)^{-p/q}M_1^*<
q\sum_{j=0}^\infty (-1)^j \binom{p/q}{j}\frac{\ln^{2j}2}{A^j\ln^{2j}(n+1)}
-\frac{c(p)}{A\ln^2(n+1)}
\end{align*}
and from \eqref{eq15}  and \eqref{eq31} we get
\[
M_1<\left(\frac{p}{p-1} \right)^p-\frac{c(A,p)}{\ln^2(n+1)}
=\left(\frac{p}{p-1} \right)^p-\frac{c(p)}{\ln^2(n+1)}.
\]

\textbf{Case $i\geq2$.}\\
From \eqref{L1}, \eqref{L21}, \eqref{eq30} we obtain
\begin{align*}\label{eq18}
&(Aq)^{-p/q}M_i^*
<q\left[\frac{1}{i^{1/q}}-\frac{1}{(n+1)^{1/q}}\right]\\
&-\frac{p}{qA\ln^2(n+1)}
\Big[\frac{q\left(\ln^2i+2q^2 \right)}{i^{1/q}}-\frac{q\left(\ln^2n+2q^2 \right)}{n^{1/q}}
-2q^2\sum_{k=i}^{n}\frac{1}{k^{1+2/q}}\notag\\
&-\frac{\ln^2i-q\ln i+3/2q^2}{2i^{2/q}}+\frac{2q^2}{3i^{3/q}}-\frac{2q^2}{3n^{3/q}}\Big]
+O\left(\frac{1}{A^2n^{1/q}} \right)\\
&+\frac{q}{i^{1/q}}\sum_{j=2}^\infty (-1)^j \binom{p/q}{j}\frac{\ln^{2j}i}{A^j\ln^{2j}(n+1)}
+\frac{1}{i^{1/q}}O\left(\frac{1}{A^2\ln^2(n+1)}\right)\\
&=\frac{q}{i^{1/q}}\sum_{j=0}^\infty (-1)^j \binom{p/q}{j}\frac{\ln^{2j}i}{A^j\ln^{2j}(n+1)}
-\frac{q}{(n+1)^{1/q}}\\
&-\frac{p}{A\ln^2(n+1)}
\Big[\frac{2q^2}{i^{1/q}}-\frac{\ln^2n+2q^2}{n^{1/q}}
-2q\sum_{k=i}^{n}\frac{1}{k^{1+2/q}}+\frac{2q}{3i^{3/q}}-\frac{2q}{3n^{3/q}}\notag\\
&-\frac{\ln^2i-q\ln i+3/2q^2}{2qi^{2/q}}\Big]
+\frac{1}{i^{1/q}}O\left(\frac{1}{A^2\ln^2(n+1)}\right)+O\left(\frac{1}{A^2n^{1/q}} \right)\\
&=\frac{q}{i^{1/q}}\sum_{j=0}^\infty (-1)^j \binom{p/q}{j}\frac{\ln^{2j}(i+1)}{A^j\ln^{2j}(n+1)}
-\frac{q}{i^{1/q}}\sum_{j=2}^\infty (-1)^j \binom{p/q}{j}\frac{\ln^{2j}(i+1)-\ln^{2j}i}{A^j\ln^{2j}(n+1)}\notag\\
&-\frac{p}{A\ln^2(n+1)}
\Big[\frac{2q^2-\ln^2(i+1)+\ln^2i}{i^{1/q}}
-2q\sum_{k=i}^{n}\frac{1}{k^{1+2/q}}\notag\\
&-\frac{\ln^2i-q\ln i+3/2q^2}{2qi^{2/q}}+\frac{2q}{3i^{3/q}}\Big]+
\frac{1}{i^{1/q}}O\left(\frac{1}{A^2\ln^2(n+1)}\right)-T
\end{align*}
where
\[
T=\frac{q}{(n+1)^{1/q}}
-\frac{p}{A\ln^2(n+1)} \left[\frac{\ln^2n+2q^2 }{n^{1/q}}
+\frac{2q}{3n^{3/q}}\right]+O\left(\frac{1}{A^2n^{1/q}} \right).
\]
By taking $A=A(p)$ big enough we can make $T$  positive.
Now
\begin{align*}
\left|\frac{q}{i^{1/q}}\sum_{j=2}^\infty (-1)^j \binom{p/q}{j}\frac{\ln^{2j}(i+1)-\ln^{2j}i}{A^j\ln^{2j}(n+1)}\right|
=\frac{1}{i^{1/q}}O\left(\frac{1}{A^2\ln^2(n+1)} \right).
\end{align*}
Also we have 
\[
\ln^2(i+1)-\ln^2i<\frac{3}{4}
\]
and
\[
\sum_{k=i}^{n}\frac{1}{k^{1+2/q}}\le \frac{1}{i^{1+2/q}}+\int_i^n\frac{dx}{x^{1+2/q}}
=\frac{1}{i^{1+2/q}}+\frac{q}{2i^{2/q}}
\]
and consequently
\begin{align*}
&\frac{2q^2-\ln^2(i+1)+\ln^2i}{i^{1/q}}
-2q\sum_{k=i}^{n}\frac{1}{k^{1+2/q}}
-\frac{\ln^2i-q\ln i+3/2q^2}{2qi^{2/q}}+\frac{2q}{3i^{3/q}}\\
&>\frac{1}{i^{1/q}}\left[2q^2-\frac{3}{4}+\frac{2q}{3i^{2/q}}-\frac{2q}{i^{1+1/q}}-\frac{q^2}{i^{1/q}}
-\frac{\ln^2i-q\ln i+3/2q^2}{2qi^{1/q}}\right].
\end{align*}
By Lemma \ref{lem10} there is a constant $c(p)>0$ such that
\begin{align*}
\frac{2q^2-\ln^2(i+1)+\ln^2i}{i^{1/q}}
-2q\sum_{k=i}^{n}\frac{1}{k^{1+2/q}}
-\frac{\ln^2i-q\ln i+3/2q^2}{2qi^{2/q}}+\frac{2q}{3i^{3/q}}>\frac{c(p)}{i^{1/q}}.
\end{align*}

Then
\begin{align*}
&(Aq)^{-p/q}M_i^*\\
&<\frac{q}{i^{1/q}}\sum_{j=0}^\infty (-1)^j \binom{p/q}{j}\frac{\ln^{2j}(i+1)}{A^j\ln^{2j}(n+1)}
-\frac{pc(p)}{A\ln^2(n+1)i^{1/q}}+\frac{1}{i^{1/q}}O\left(\frac{1}{A^2\ln^2(n+1)} \right).
\end{align*}
By taking $A=A(p)$ big enough we obtain
\[
M_i^*<\frac{A^{p/q}q^p}{i^{1/q}}\sum_{j=0}^\infty (-1)^j \binom{p/q}{j}\frac{\ln^{2j}(i+1)}{A^j\ln^{2j}(n+1)}
-\frac{c(A,p)}{i^{1/q}\ln^2(n+1)}
\]
and from \eqref{eq15} and \eqref{eq31} we get
\[
M_i<q^p-\frac{c(A,p)}{\ln^2(n+1)}
=\left(\frac{p}{p-1} \right)^p-\frac{c(A,p)}{\ln^2(n+1)}
=\left(\frac{p}{p-1} \right)^p-\frac{c(p)}{\ln^2(n+1)}.
\]

\bibliographystyle{amsplain}

\begin{thebibliography}{10}

\bibitem{H1919} 
G.\,H. Hardy, \textit{Notes on some points in the integral calculus, LI. On Hilbert's double-series
theorem, and some connected theorems concerning the convergence of infinite series and integrals}, Messenger Math. 48 (1919), 107--112.

\bibitem{H1920} 
G.\,H. Hardy, \textit{Notes on a theorem of Hilbert}, Math. Z. 6 (1920), 314--317.

\bibitem{H1925} 
G.\,H. Hardy, \textit{Notes on some points in the integral calculus, LX. An inequality between
integral}, Messenger Math. 54 (1925), 150--156. 



\bibitem{Lan2} 
E. Landau, \textit{Letter to G.~H.~Hardy}, June 21, 1921.

\bibitem{Lan1} 
E. Landau, \textit{A note on a theorem concerning series of positive terms: Extract from a letter of Prof. E. Landau
to Prof. I. Schur}, J. London Math. Soc. 1 (1926), 38--39.

\bibitem{Tom}
Tomaselli, G, A class of inequalities, Boll. Un. Mat. Ital. 2 (1969), 622-631. MR 41 \# 411

\bibitem{Tal}
Talenti, G, Observazioni sopra una classe di disuguaglianze (Italian), Rend. Sem. mat. Fis. Milano 39 
(1969), 171-185. MR 43 \# 6380

\bibitem{Muc}
Muckenhoupt, B., Hardy's inequality with weights, Studia math. 44 (1972), 31-38. MR 47 \# 418

\bibitem{KMP2007}
A. Kufner, L. Maligranda, and L.-E. Person, \textit{The  Hardy
Inequality: About its History and Some Related Results},
Vydavatelsk\'{y} servis, 2007.

\bibitem{KP2003}
A. Kufner, L.-E. Person, and N. Samko, \textit{Weighted Inequalities of
Hardy Type}, 2nd ed., World Scientific, Singapore, 2017.

\bibitem{DGM}
Dimitar Dimitrov, Ivan Gadjev and Ismail Murad, \textit{Sharp Hardy's Inequalities in HIlbert Spaces},
to appear, preprint https://arxiv.org/abs/2306.08172


\bibitem{HW1}
Widom, H., On the eigenvalues of certain Hermitian operators, Trans. Amer.
Math. Soc., 88(1), 491-522 (1958).

\bibitem{HW2}
Widom, H., Extreme eigenvalues of translation kernels, Trans. Amer. Math.
Soc., 100(2), 252-262 (1961). 

\bibitem{Wilf}
H.S. Wilf, H., On finite sections of the classical inequalities, Nederl. Akad. Wet. Amsterdam Proc. Ser. A, 1962 - core.ac.uk
 
\bibitem{Wilf2}
Bruijn, N. G. DE and H. S. Wilf, On Hilbert's Inequality in n dimensions,
Bull. Amer. Math. Soc. 

\bibitem{Pec}  
A. $\check{\mbox{C}}$i$\check{\mbox{z}}$me$\check{\mbox{s}}$ija, J. Pe$\check{\mbox{c}}$ari\'c, Mixed means and Hardy's inequality, Math. Inequal. Appl. 1(1998), no. 4, 491-506.

\bibitem{FS}  
F. Stampach, Asymptotic spectral properties of the Hilbert L-matrix, SIAM J. Matrix Anal. Appl. 43 (2022),
1658-1679

\bibitem{DIGR}
Dimitar K. Dimitrov, Ivan Gadjev, Geno Nikolov, Rumen Uluchev, \textit{Hardy's inequalities in finite dimensional Hilbert spaces}, Proc. Amer. Math. Soc. 149 (2021), 2515-2529, DOI:https://doi.org/10.1090/proc/15467,

\bibitem{Long}
W-Gao Long, D. Dai, Y-T. Li, X-S. Wang, Asymptotics of orthogonal polynomials with asymptotic Freud-like weights,   Stud. Appl. Math. 144 (2020), 133--163.

\bibitem{GG}
Ivan Gadjev, Vasil Gochev, On the constant in the Hardy inequality for finite sequences, Mathematical Inequalities \& Applications, vol:26, issue: 2, 2023, pages: 493-498

\bibitem{Apostol}
T. Apostol, \textit{Mathematical Analysis, 2nd edition},
Readind Mass: Addison-Wesley, 1974

\end{thebibliography}

\end{document}